\newtheorem{proposition}{Proposition}[section]
\newtheorem{lemma}[proposition]{Lemma}
\newtheorem{theorem}[proposition]{Theorem}
\newtheorem{corollary}[proposition]{Corollary}
\theoremstyle{definition}
\newtheorem{remark}[proposition]{Remark}
\newtheorem{example}[proposition]{Example}
\newtheorem{definition}[proposition]{Definition}
\tikzstyle{place}=[draw,circle,minimum size=1mm,inner sep=1pt,outer sep=-1.1pt,fill=black]
\newcommand{\vertexdef}{\tikzstyle{vertex}=[draw,circle,fill=white,minimum size=1mm,inner sep=0pt]
			\tikzstyle{cv}=[white,draw,circle, 
			minimum size = 1.4mm, inner sep=0pt]}
\tikzstyle{places}=[draw,rectangle,minimum size=8pt,inner sep=0pt]
\tikzstyle{placesf}=[draw,rectangle,minimum size=5pt,inner sep=0pt]
\tikzstyle{placec}=[draw,circle,minimum size=8pt,inner sep=0pt]
\tikzstyle{placecf}=[draw,circle, minimum size=5pt,inner sep=0pt]
\newcommand{\G}{\mathcal{G}}
\begin{document}
\title{Well-covered Token Graphs}

\author[F.M. Abdelmalek]{F.M. 
Abdelmalek}
\address{Department of Mathematics \& Statistics\\
McMaster University \\,
Hamilton, ON, L8S 4L8, Canada}
\email{abdelf1@mcmaster.ca}

\author[E. Vander Meulen]{Esther Vander Meulen}
\address{Department of Mathematics\\
Redeemer University, Ancaster, ON, L9K 1J4, Canada}
\email{esvandermeulen@redeemer.ca}

\author[K.N. Vander Meulen]{Kevin N. Vander Meulen}
\address{Department of Mathematics\\
Redeemer University, Ancaster, ON, L9K 1J4, Canada}
\email{kvanderm@redeemer.ca}

\author[A. Van Tuyl]{Adam Van Tuyl}
\address{Department of Mathematics \& Statistics\\
McMaster University \\
Hamilton, ON, L8S 4L8, Canada}
\email{vantuyl@math.mcmaster.ca}
\date{\today}

\keywords{independence number, well-covered graph, token graph, double vertex graph, symmetric power of a graph} 
\subjclass{05C69}
\maketitle

\begin{abstract}
The $k$-token graph  $T_k(G)$ is the graph whose vertices are the $k$-subsets of vertices of a graph $G$, with two
vertices of $T_k(G)$ adjacent if 
their symmetric difference is an edge of $G$. 
We explore when $T_k(G)$ is a well-covered graph,
that is, when all of its maximal independent sets have
the same cardinality.   For bipartite graphs $G$, we classify
when $T_k(G)$ is well-covered.  For an arbitrary 
graph $G$, we show that if $T_2(G)$ is well-covered,
then the girth of $G$ is at most four.  We include
upper and lower bounds on the independence number
of $T_k(G)$, and provide some families of well-covered
token graphs.
\end{abstract}

%%%%%%%%%%%%%%%%%%%%%%%%%%%%%%%%%%%%%%%%%%%%%%%%%%%%%

\section{Introduction}
Let $G$ be a graph with vertex set $V=V(G)$ of order $n$ and let $1\leq k \leq n-1$. 
The \emph{$k$-token graph} of $G$, denoted $T_k(G)$, 
has as vertices the $k$-subsets of $V$ with two vertices adjacent if their symmetric difference
is an edge of $G$. Thus a vertex of $T_k(G)$ can be thought of as a placement of tokens
on $k$ vertices of $G$ with two vertices $u, v \in V(T_k(G))$ adjacent if 
$u$ can be obtained from $v$ by moving a single token along an edge of $G$. Hence,
if $G$ is a connected graph, then $T_k(G)$ is
also connected.  An example of a graph $G$ and its
$2$-token graph $T_2(G)$ is given in Figure \ref{fig:T2}.
We often abuse notation and write $i_1i_2\cdots i_k$ 
instead of $\{i_1,\ldots,i_k\}$ for a $k$-subset of $V$.
Note that $T_1(G)=G$ and that $T_k(G)$ is isomorphic to $T_{n-k}(G)$ for $1\leq k\leq n-1$. 
Thus, when exploring properties of token graphs, 
 it is sufficient to consider values of $k$ satisfying $1\leq k \leq \left\lfloor \frac{n}{2} \right\rfloor$. If
$\{u,v\}$ is an edge of $T_k(G)$, then $\vert u\cap v \vert = k-1$ and we refer to the set $u\cap v$ as the
\emph{anchor} of the edge $\{u,v\}$.
   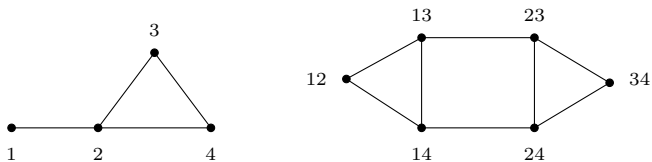
\begin{figure}[ht]
%	\label{fig:forceEG}
	\begin{center}
	\begin{tiny}
	\begin{tabular}{cc}
		\begin{tikzpicture}
		\vertexdef
        \node[place] (1) at (-0.15,0){}; %{$1$};
          \node at (-0.15,-0.35){$1$};    
        \node[place] (2) at (1,0){}; % {$2$};
		  \node at (1,-0.35){$2$};        
        \node[place] (3) at (1.75,1){}; %: {$3$};
          \node at (1.75, 1.3){$3$}; 
        \node[place] (4) at (2.5,0){}; %{$4$};
          \node at (2.5,-0.35){$4$}; 
        %\node[place] at (3,2){};   
        \draw (1) -- (2);
		\draw (2) -- (3);
	    \draw (3) -- (4);
	    \draw (2) -- (4);
	    \end{tikzpicture}  
\qquad & \qquad 
      \begin{tikzpicture}
		\vertexdef
        \node[place] (12) at (0,.65){}; %{$1$};
          \node at (-0.4,0.65){$12$};        
        \node[place] (14) at (1,0){}; % {$2$};
		  \node at (1,-0.35){$14$};        
        \node[place] (13) at (1,1.2){}; %: {$3$};
          \node at (1, 1.5){$13$}; 
        \node[place] (24) at (2.5,0){}; %{$4$};
          \node at (2.5,-0.35){$24$};  
		\node[place] (23) at (2.5,1.2){}; %{$4$};
          \node at (2.5,1.5){$23$};           
        \node[place] (34) at (3.5,0.6){};   %{$4$};
		  \node at (3.9,0.65){$34$};          
          %\node at (3.5,0.75){$34$};     %align the second tikzpicture in tabular
        \draw (12) -- (13);
		\draw (12) -- (14);
	    \draw (13) -- (14);
	    \draw (13) -- (23);
	    \draw (14) -- (24);
	    \draw (23) -- (24);
	    \draw (23) -- (34);
	    \draw (24) -- (34);
	    \end{tikzpicture}
      \end{tabular}	  
      \end{tiny}  
   \end{center}
   \caption{A graph $G$ and its $2$-token graph $T_2(G)$.}\label{fig:T2}
	\end{figure}

The $k$-token graphs appear in the literature under a number of different names.
The $k$-token graphs are a generalization of the \emph{Johnson graphs} (see e.g. \cite{FFHHUW}). In particular,
if $K_n$ is the complete graph on $n$ vertices, then $T_k(K_n)$ is the Johnson graph $J(n,k)$. 
Thus $T_2(K_n)$ is also known to be the complement of the \emph{Kneser graph} $Kn(n,2)$. The $k$-token graph $T_k(G)$
is also known as the \emph{symmetric $k$\textsuperscript{th} power} of $G$ (see e.g. \cite{AGRR, BP}).  Finally, the  $2$-token
graph $T_2(G)$ is also called a \emph{double vertex graph} (see e.g. \cite{ABEL, ALL}). 

Various properties of token
graphs have recently been studied.
For example, in~\cite{CFLR}, 
Carballosa, Fabila-Monroy, Lea\~{n}os, and
Rivera  characterize when the token graphs
are regular, as well as when a token
graph is planar. In~\cite{LT},
Lea\~{n}os and Trujillo-Negrete, prove a conjecture about the connectivity of token graphs. In~\cite{RT}, Rivera and Trujillo-Negrete explore the Hamiltonicity of token graphs. The spectra of token graphs has
been explored in various papers in the 
context of exploring cospectral graphs (see
e.g. \cite{AIP,AGRR}).

In this paper we explore properties of the independent sets of $T_k(G)$, and in particular, we focus on the problem
of determining when $T_k(G)$  is well-covered. An \emph{independent} set
of a graph $\Gamma$ is a subset $S$ of vertices of $\Gamma$ such that no two vertices in $S$ are adjacent in
$\Gamma$. The \emph{independence number} of $\Gamma$,  denoted $\alpha(\Gamma)$,  is the maximum cardinality of any independent set of $\Gamma$. For example, 
for the graphs in Figure~\ref{fig:T2}, $\alpha(G)=\alpha(T_2(G))=2.$   A graph $\Gamma$ 
is \emph{well-covered} if all of its maximal independent sets
have the same cardinality. The graph $G$ in Figure~\ref{fig:T2} is not well-covered
but $T_2(G)$ is well-covered.
The graph $G$ in Figure~\ref{fig:CC} is
not well-covered, nor is $T_2(G)$ well-covered, as illustrated in Example~\ref{ex:CC}. 

Some results are known about $\alpha(T_k(G))$.
In \cite{ACLR}, de Alba, Carballosa,
Lea\~{n}os, and Rivera  bound the independence number of $T_k(G))$ when $G$ is bipartite.
When $k=2$, they derive exact values for
$\alpha(T_2(G))$ when $G$ is either the complete bipartite graph, the cycle $C_n$, or the path $P_n$.   Jim\'enez-Sep\'ulveda and Rivera \cite{JR} determine, $\alpha(T_2(G))$ when $G$ is the fan graph
and the wheel graph.    A sharp lower bound 
on $\alpha(T_2(G))$ appears in work of Deepalakshmi, Marimuthu, Somasundaram, and Arumugam \cite{DMSA} (also see Remark~\ref{rem:lb}).

In the first part of this paper, we derive sharp
upper and lower bounds for $\alpha(T_k(G))$ in
terms of $\alpha(G)$ for all $k\geq 2$.  In particular, in Theorem~\ref{qbound} and Corollary~\ref{cor:aa} we show
that 
$$\binom{\alpha(G)}{k} \leq \alpha(T_k(G)) \leq \frac{1}{k}{\binom{n}{k-1}} \alpha(G).$$
Interestingly, equality in the upper bound depends upon the 
existence of a
specific combinatorial design.  We also
produce a number of methods to construct
 maximal independent sets in $T_k(G)$, when $k=2$, from independent 
sets of $G$ (see e.g. Theorems~\ref{thm:MM}, \ref{thm:VVVV} and \ref{thm:VVE}).
 We obtain some
results about characteristics of
graphs $G$ for which $T_k(G)$ is well-covered.
For example, we provide a classification for bipartite graphs. In particular,
we observe in Corollary~\ref{cor:bpwc} 
that if $G$ be is a bipartite graph, then $T_k(G)$ is well-covered if and only if $k=1$ and 
$G$ is well-covered.
We  determine in 
Corollary~\ref{cor:girth} that a graph $G$ can not have large girth if $T_k(G)$
is well-covered,  where \emph{girth} is the smallest
induced cycle in $G$. We also provide some infinite families of graphs $G$ for which $T_2(G)$ is well-covered.

We use the following outline in our paper. 
In Section~\ref{sec:design} we 
prove our results about the upper bound, while Section~\ref{sec:lower} 
focuses on constructions of maximal independent sets.  This section includes a general lower bound on the independence number.
In Section~\ref{sec:bi}, we characterize when
$T_k(G)$ is well-covered if $G$ is
bipartite.
In Section~\ref{sec:girth} we provide some restrictions on graphs $G$ for which $T_k(G)$ is well-covered. Then
in Section~\ref{sec:wcgraphs} we provide some families of graphs $G$ for which $T_2(G)$ is well-covered.  Section 8 contains some
concluding remarks, and finally, in the 
appendix we list all the graphs $G$
on nine or fewer vertices such that
$T_2(G)$ is well-covered.

We end this section with some common definitions that we will use throughout the paper. 
The subgraph of $G$ \emph{induced} by the set of vertices $A\subset V(G)$ is 
 denoted $G[A]$, having vertex set $A$ with two vertices adjacent in $G[A]$ if and only if they are
adjacent in $G$.
Given a $x \in V(G)$, the \emph{neighbourhood}
of $x$ is the set
$N(x) = \{y~|~ \{x,y\} \in E(G)\}$.  Given
a set $X \subseteq V(G)$, the 
\emph{neighbourhood} of $X$ is $N(X) = 
\bigcup_{x \in X} N(x)$.  The \emph{closed
neighbourhood} of $X$ is $N[X] = X \cup N(X)$.
For any $W \subseteq V(G)$, let $G \backslash W$
denote the graph obtained by removing all the 
vertices of $W$ from $G$ and all edges incident
to a vertex in $W$.

\medskip 
\noindent
{\bf Acknowledgements.}   
Research supported in part by an NSERC USRA (Abdelmalek
and E. Vander Meulen) as well as 
NSERC Discovery Grants 2016-03867 (K.N. Vander Meulen) and
2019-05412 (Van Tuyl).

%%%%%%%%%%%%%%%%%%%%%%%%%%%%%%%%%%%%%%%%%%%%%%%%%

\medskip

%%%%%%%%%%%%%%%%%%%%%%%%%%%%%%%%%%%%%%%%%%%%%%%%%%%%%%%%55

\section{Independent sets of token graphs and 
combinatorial designs}\label{sec:design}

In this section
we describe a relationship between $\alpha(G)$ and $\alpha(T_k(G))$, and a connection 
with combinatorial designs. 
Recall that 
a $t$-$(v,k,\lambda)$ \emph{design} is a collection of $k$-subsets of a set of $v$ elements, such
that every $t$-subset appears in exactly $\lambda$ of the $k$-subsets.

\begin{theorem}\label{qbound} Let $k \leq \left\lfloor{\frac{n}{2}}\right\rfloor$.
If $G$ is a graph on $n$ vertices with no isolated vertices, then
$$\alpha(T_k(G)) \leq \frac{1}{k}{\binom{n}{k-1}} \alpha(G).$$
If equality occurs, then there exists a $t$-$(n,k,\lambda)$ design with $t=k-1$ and $\lambda=\alpha(G)$.
\end{theorem}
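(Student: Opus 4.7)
The plan is to prove the upper bound via a double counting argument indexed by the $(k-1)$-subsets of $V(G)$. Let $S$ be an independent set of $T_k(G)$. For each $(k-1)$-subset $A$ of $V(G)$, define
$$I_A = \{ x \in V(G) \setminus A \,:\, A \cup \{x\} \in S \}.$$
The first key observation I would establish is that $I_A$ is an independent set in $G$: if $x, y \in I_A$ with $x \neq y$, then the two $k$-subsets $A\cup\{x\}$ and $A\cup\{y\}$ both belong to $S$ and have symmetric difference $\{x,y\}$; since $S$ is independent in $T_k(G)$, the pair $\{x,y\}$ is not an edge of $G$. Hence $|I_A| \leq \alpha(G)$ for every such $A$.

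Next I would count the pairs $(A,B)$ with $A \in \binom{V}{k-1}$, $B \in S$ and $A \subset B$ in two ways. Summing over $A$ gives $\sum_A |I_A|$, and summing over $B$ gives $k|S|$ since each $B \in S$ has exactly $k$ subsets of size $k-1$. Combining these with the pointwise bound $|I_A| \leq \alpha(G)$ yields
$$k|S| \;=\; \sum_{A \in \binom{V}{k-1}} |I_A| \;\leq\; \binom{n}{k-1} \alpha(G),$$
which, after dividing by $k$ and specializing to a maximum independent set $S$, is the desired bound.

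For the equality statement, I would observe that equality in the displayed inequality forces $|I_A| = \alpha(G)$ for every $(k-1)$-subset $A$ of $V$. Translating back, this says that every $(k-1)$-subset of $V$ is contained in exactly $\alpha(G)$ members of $S$, which is precisely the defining property of a $(k-1)$-$(n,k,\alpha(G))$ design on the vertex set of $G$, with the blocks being the elements of $S$.

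I expect the argument itself to be short and essentially routine; the only subtlety is noticing the correct object to count (the incidence pairs between $(k-1)$-subsets and members of $S$) and observing that the fiber $I_A$ above a fixed $(k-1)$-subset is forced to be an independent set in $G$, not merely a set of vertices. The hypotheses $k \leq \lfloor n/2 \rfloor$ and "no isolated vertices" play no role in this inequality itself, but are carried along so that the conclusion about the existence of a design is non-vacuous (e.g.\ so that $\alpha(G) < n$ and $\binom{n}{k-1} > 0$).
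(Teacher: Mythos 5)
Your proposal is correct and is essentially the paper's own argument: the paper counts the same incidences by forming the multiset of anchors of members of $S$, notes that each $(k-1)$-subset can occur at most $\alpha(G)$ times (precisely because the fiber over it is an independent set of $G$, which you make explicit), and reads off the design in the equality case. Your write-up just names the fibers $I_A$ and states the independence observation more explicitly than the paper does.
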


\begin{proof}
Consider an independent set $S \subset V(T_k(G))$ with $|S| = \alpha(T_k(G))$. 
Each $v \in S$ contains $k$ potential anchors. 
Consider the multiset $M$ consisting of all the subsets $R$ of cardinality $k-1$ such
that $R\subset v$ for some $v\in S$. 
Then $\vert M \vert = k\alpha(T_k(G))$. 
Note that there are at most ${\binom{n}{k-1}}$ different potential anchors to
be constructed from $n$ vertices and each anchor can appear at most $\alpha(G)$ times
in $M$.   Thus $\vert M \vert \leq {\binom{n}{k-1}}\alpha(G).$ If we have equality, then
every $k-1$ subset must appear as an anchor exactly $\alpha(G)$ times, hence $M$ 
is a $t$-$(n,k,\alpha(G))$ design with $t=k-1$.
\end{proof}
We note that equality is possible in Theorem~\ref{qbound}. For example, let $G=C_5$, the cycle graph
on five vertices. Then $\alpha(G)=2$ and $\{12,23,34,45,15\}$ is a maximum independent set in $T_2(G)$
so that $\alpha(T_2(G))=5$. 

\begin{example}  It was shown in \cite[Cor. 3.10]{ACLR} that $\alpha(T_3(P_{2m+1}))=\frac{(2m+1)m(m+1)}{3}$, which meets the
bound in Theorem~\ref{qbound}. This corresponds to the existence
of a $2$-$(2m+1,3,m+1)$ design. It was also observed in \cite[Cor. 3.10]{ACLR} that $\alpha(T_2(K_{m,m}))=m^2$ which again meets the bound in Theorem~\ref{qbound} and corresponds to the existence of a $1$-$(2m,2,m)$ design.
\end{example}

Another example is the
complete graph $K_n$ with $k=2$ and $n$ even, as noted in the next remark. We provide a proof for completion 
but note that this is a known
result since $T_2(K_n)$ is merely the line graph of the complete graph.
\begin{remark}
If $n\geq 2$, then $\alpha(T_2(K_n))=\left\lfloor \frac{n}{2} \right\rfloor$.
\end{remark}

\begin{proof} If $n$ is even,
the set  $\{12,34,\ldots,n-1n\}$ is 
an independent set of $T_2(K_n)$.
If $n$ is odd, then $\{12,34,\ldots,n-2n-1\}$
is an independent set.  So $\alpha(T_2(K_n))
\geq \left\lfloor \frac{n}{2} \right\rfloor$.
But by Theorem~\ref{qbound}, 
$\left\lfloor \frac{n}{2} \right\rfloor$
is also an upper bound on $\alpha(T_2(K_n))$.
\end{proof}

We can
characterize when we get equality in Theorem~\ref{qbound} for the complete graphs.
Note that 
 $T_k(K_n)$ is isomorphic to the Johnson graph $J(n,k)$. 

\begin{theorem} Given $n\geq 2$ and $k\leq \lfloor \frac{n}{2} \rfloor$, then 
$\alpha(T_k(K_n)) = \frac{1}{k}\binom{n}{k-1}$ if and only if there exists a $t$-$(n,k,1)$ design
with $t=k-1$.
\end{theorem}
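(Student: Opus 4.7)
The plan is to observe that $\alpha(K_n)=1$, so Theorem~\ref{qbound} immediately gives the upper bound $\alpha(T_k(K_n))\leq \frac{1}{k}\binom{n}{k-1}$, and then to show that equality in this bound is equivalent to the existence of a $(k-1)$-$(n,k,1)$ design by matching independent sets in $T_k(K_n)$ with such designs.

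For the forward direction, I would just quote the second sentence of Theorem~\ref{qbound}: if $\alpha(T_k(K_n)) = \frac{1}{k}\binom{n}{k-1}$, then since $\alpha(K_n)=1$, there must exist a $(k-1)$-$(n,k,1)$ design.

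For the reverse direction, suppose $\mathcal{D}$ is a $(k-1)$-$(n,k,1)$ design on the vertex set $V(K_n)$. I would argue that the collection of blocks of $\mathcal{D}$ is an independent set in $T_k(K_n)$: if two distinct blocks $B_1,B_2$ were adjacent in $T_k(K_n)$, then their symmetric difference is an edge of $K_n$, forcing $|B_1\cap B_2|=k-1$, so the $(k-1)$-subset $B_1\cap B_2$ lies in two distinct blocks, contradicting the parameter $\lambda=1$. A standard double-count (each block contains exactly $k$ of the $(k-1)$-subsets, and each of the $\binom{n}{k-1}$ such subsets lies in exactly one block) shows that $|\mathcal{D}| = \frac{1}{k}\binom{n}{k-1}$, producing an independent set that meets the upper bound.

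Combining the two directions yields the claimed equivalence. I do not anticipate a real obstacle here; the content of the statement is essentially a repackaging of Theorem~\ref{qbound} in the special case $G=K_n$, together with the elementary observation that, in $T_k(K_n)$, adjacency is determined purely by the size of the intersection, so the block-intersection condition of a Steiner-type design translates directly into independence.
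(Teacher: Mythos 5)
Your proposal is correct and follows essentially the same route as the paper: both directions rest on Theorem~\ref{qbound} with $\alpha(K_n)=1$, and the converse constructs the independent set from the blocks of the design, noting that adjacency in $T_k(K_n)$ forces a shared $(k-1)$-subset, which $\lambda=1$ forbids. The only cosmetic difference is that you carry out the double count for $|\mathcal{D}|=\frac{1}{k}\binom{n}{k-1}$ explicitly, whereas the paper cites a standard design-theory reference for it.
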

\begin{proof}
Let $M$ be a collection of $k$ subsets of an $n$ set forming a $t$-$(n,k,1)$ design with $t=k-1$. 
Then the number of $k$-subsets in $M$ is $\frac{1}{k}\binom{n}{k-1}$ (see e.g.\cite[Cor. 1.4]{HP})
and each element of $M$ is a vertex of $T_k(K_n)$. 
Since each $(k-1)$-subset of $n$ appears in at most one block, no two vertices appearing
in $M$ are adjacent in $T_k(K_n)$. Thus $M$ is an independent set in $T_k(K_n)$. From the proof of Theorem~\ref{qbound}, 
we have $\alpha(T_k(K_n))=\vert M\vert$. The converse follows directly from Theorem~\ref{qbound}.  
\end{proof}

\begin{example} It is known 
that for any $t\geq 1$  there exists a $2$-$(6t+1,3,1)$ design and
a $2$-$(6t+3,3,1)$ design (see Steiner systems, e.g. \cite[p. 174]{HP}). Thus, for $t\geq 1$, 
$$\alpha(T_3(K_{6t+1})) = \frac{1}{3}\binom{6t+1}{2}  \mbox{\rm{\ and\ \  }}  
\alpha(T_3(K_{6t+3})) = \frac{1}{3}\binom{6t+3}{2}.$$
\end{example}

%%%%%%%%%%%%%%%%%%%%%%%%%%%%%%%%%%%%%%%%%%%%%%%%%%%%%
%

\section{Constructing independent sets in token graphs}\label{sec:lower}

In this section, we describe some methods of constructing
independent sets of token graphs.
We start with a remark that describes one way to visualize an
independent set in a $2$-token graph. 
\begin{remark}\label{rem:GKn} For a graph $G$ on $n$ vertices, one can picture an independent set in $T_2(G)$ as 
a set of edges $E$
selected from $K_n$ with the property that no
two adjacent edges in $E$ are part of a triangle whose third side is an edge of $G$ (considering $G$ as a subgraph of $K_n$). 
\end{remark}
To describe some constructions of
independent sets in $k$-token graphs, we introduce the following notation.
Given subsets $V_1,V_2, \ldots,V_k\subseteq V(G)$, 
not necessarily distinct,
we define 
\[V_1V_2\cdots V_k =
\{ x_1x_2\cdots x_k ~|~ x_i \in V_i
~~\mbox{and $x_i \neq x_j$ for all $i \neq j$}\}.\]
Observe that $V_1V_2\cdots V_k$ is a subset of
the vertices of $T_k(G)$.  Indeed, $V(G)V(G)
\cdots V(G)$ ($k$ times) is the set of vertices of $T_k(G)$.

\begin{theorem}\label{thm:MM}
Let $G$ be a graph with independent sets
$V_1,V_2,\ldots,V_k$ such that $V_i \cap V_j = \emptyset$
or $V_i = V_j$ for all $i \neq j$.  
Then $V_1V_2\cdots V_k$ is an
independent set of $T_k(G)$.
\end{theorem}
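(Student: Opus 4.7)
The plan is to argue by contradiction. Suppose $u, v \in V_1 V_2 \cdots V_k$ are distinct and adjacent in $T_k(G)$. Since edges of $T_k(G)$ satisfy $|u \cap v| = k-1$, there are unique elements $a \in u \setminus v$ and $b \in v \setminus u$ with $\{a, b\} \in E(G)$. My aim is to show that $a$ and $b$ must both lie in a common set $V_i$; then the independence of $V_i$ rules out the edge $\{a,b\}$, giving the desired contradiction.

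To handle the possibility that some of the $V_i$ coincide, I would first pass to distinct classes. Let $W_1, \ldots, W_m$ be the distinct sets among $V_1, \ldots, V_k$. By hypothesis the $W_\ell$ are pairwise disjoint, and each is an independent set of $G$. For each $\ell$ let $n_\ell$ denote the number of indices $i$ with $V_i = W_\ell$. Unpacking the definition of $V_1 V_2 \cdots V_k$ and using the disjointness of the $W_\ell$, every element of $V_1 V_2 \cdots V_k$ can be written as a disjoint union $\bigcup_{\ell=1}^{m} A_\ell$, where $A_\ell \subseteq W_\ell$ and $|A_\ell| = n_\ell$. Consequently, for every $w \in V_1 V_2 \cdots V_k$ and every $\ell$, we have $|w \cap W_\ell| = n_\ell$.

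Now return to the adjacent pair $u, v$, and let $\ell, \ell'$ be the unique indices with $a \in W_\ell$ and $b \in W_{\ell'}$. If $\ell \neq \ell'$, then comparing $u$ and $v$ inside $W_\ell$ shows $|v \cap W_\ell| = |u \cap W_\ell| - 1 = n_\ell - 1$, contradicting the constancy just established. Therefore $\ell = \ell'$, so $a$ and $b$ lie in the same $W_\ell$; since $W_\ell$ is independent in $G$, this forces $\{a,b\} \notin E(G)$, contradicting our starting assumption.

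The only mild obstacle is the notational bookkeeping when some of the $V_i$'s coincide; repackaging them as the distinct sets $W_\ell$ with multiplicities $n_\ell$ makes the counting transparent, after which the argument reduces to a single intersection-count comparison.
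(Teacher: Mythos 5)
Your proof is correct and follows essentially the same strategy as the paper's: both arguments hinge on the observation that every member of $V_1V_2\cdots V_k$ meets each distinct class in a fixed number of elements (your $|w\cap W_\ell| = n_\ell$, the paper's count of ``$a$ distinct elements belonging to $V_i$''), so a single token move cannot cross between disjoint classes, and independence within a class rules out the remaining case. Your repackaging via the distinct classes $W_\ell$ with multiplicities is a slightly cleaner way to organize the same counting argument.
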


\begin{proof}
Let $W = V_1V_2\cdots V_k$, and suppose that
$x_1x_2\cdots x_k, y_1y_2\cdots y_k \in W$.
If 
\[| x_1x_2\cdots x_k \triangle y_1y_2\cdots y_k| 
\neq 2,\] then these vertices cannot be
adjacent by the definition of $T_k(G)$. 
Here $A\triangle B$ denotes the symmetric
difference of $A$ and $B$. So, suppose
\[x_1x_2\cdots x_k \triangle y_1y_2\cdots y_k = \{x_i,y_j\}.\] We have $x_i \in V_i$ and $y_j \in V_j$..

If $V_i = V_j$, then $\{x_i,y_j\}$ is not an 
edge in $E(G)$ since $V_i$ is an independent
set, and thus $x_1x_2\cdots x_k$ and $y_1y_2\cdots y_k$
are not adjacent in $T_k(G)$.  So, suppose that 
$V_i \cap V_j = \emptyset$. 
Suppose that $V_i$ appears $a$ times among
$V_1,V_2,\ldots,V_k$, i.e, $V_{i_1} = \cdots =
V_{i_a} = V_i$.  So, exactly $a$ distinct elements
of $\{x_1,x_2,\ldots,x_n\}$ belong to $V_i$ and the same is true for $\{y_1,\ldots,y_n\}$.   However, since 
$x_1x_2 \cdots x_k \triangle y_1y_2\cdots y_k = \{x_i,y_j\}$ with $y_j \in V_j$ 
and $V_i \cap V_j = \emptyset$, all of the 
distinct
elements
in $\{y_1,\ldots,y_k\}$ that belong to $V_i$ 
must appear in $x_1x_2 \cdots x_k
\backslash \{x_i\}$.  But there are only $a-1$  
distinct elements of $V_i$ in $x_1x_2\ldots x_k
\backslash \{x_i\}$.  So, we cannot have 
a symmetric difference of the form $\{x_i,y_j\}$
with $x_i \in V_i$, $y_j \in V_j$ and 
$V_i \cap V_j =\emptyset$.
\end{proof}

\begin{corollary}\label{sizeofalpha}
Let $G$ be a graph with independent sets
$V_1,V_2,\ldots,V_k$ such that $V_i \cap V_j = \emptyset$
or $V_i = V_j$ for all $i \neq j$.  Suppose that
$\{V_{i_1},\ldots,V_{i_l}\}$ are the
distinct subsets that appear among
$V_1,\ldots,V_k$, and that $V_{i_t}$ appears $a_t$ times
(so $a_1+\cdots+a_l=k$).
Then 
\[\binom{|V_{i_1}|}{a_1}\binom{|V_{i_2}|}{a_2}\cdots
\binom{|V_{i_l}|}{a_l} \leq \alpha(T_k(G)).\]
\end{corollary}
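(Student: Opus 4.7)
The plan is to appeal directly to Theorem~\ref{thm:MM} and then count $|V_1V_2\cdots V_k|$. By Theorem~\ref{thm:MM}, the set $W = V_1V_2\cdots V_k$ is an independent set of $T_k(G)$, so any lower bound on $|W|$ automatically bounds $\alpha(T_k(G))$ from below. The content of the corollary is therefore a combinatorial count, and no new graph-theoretic input is needed.

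First I would re-index using the distinct sets $V_{i_1},\ldots,V_{i_l}$ together with their multiplicities $a_1,\ldots,a_l$. A vertex in $W$ is a $k$-subset $\{x_1,\ldots,x_k\}$ of $V(G)$ admitting an assignment $x_j \in V_j$ with the $x_j$ pairwise distinct. Since exactly $a_t$ of the indices $j$ satisfy $V_j = V_{i_t}$, and since the distinct $V_{i_t}$ are pairwise disjoint by hypothesis, such a $k$-subset must contain exactly $a_t$ elements of $V_{i_t}$ for each $t$: at least $a_t$ of the coordinates have to lie in $V_{i_t}$, and at most $a_t$ can, since the remaining slots are indexed by sets disjoint from $V_{i_t}$.

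Second, I would show that this description gives a bijection. Given any choice of an $a_t$-subset $S_t \subseteq V_{i_t}$ for each $t$, the union $S_1 \cup \cdots \cup S_l$ is a $k$-subset of $V(G)$ (the disjointness of the $V_{i_t}$ forces $|S_1 \cup \cdots \cup S_l| = a_1 + \cdots + a_l = k$), and the obvious assignment places each element of $S_t$ into a distinct slot indexed by $V_{i_t}$, witnessing that the union lies in $W$. Distinct tuples of $S_t$'s yield distinct unions, again by disjointness, so counting independently gives
\[
|W| = \prod_{t=1}^{l} \binom{|V_{i_t}|}{a_t},
\]
and combining with $\alpha(T_k(G)) \geq |W|$ finishes the argument.

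The only mild obstacle is making sure the bijection between elements of $W$ and tuples of $a_t$-subsets is set up cleanly, so that we neither overcount (which would happen if the distinct $V_{i_t}$ overlapped) nor undercount (which could happen if the multiplicity bookkeeping were sloppy). The degenerate case $|V_{i_t}| < a_t$ for some $t$ needs no separate treatment: the right-hand side vanishes and the inequality holds trivially.
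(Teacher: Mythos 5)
Your proposal is correct and follows essentially the same route as the paper: invoke Theorem~\ref{thm:MM} to get that $V_1V_2\cdots V_k$ is independent, then count $|V_1V_2\cdots V_k|$ as $\prod_t \binom{|V_{i_t}|}{a_t}$ using the pairwise disjointness of the distinct $V_{i_t}$. Your write-up is somewhat more explicit about the bijection and the degenerate case, but the argument is the same.
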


\begin{proof}
By Theorem \ref{thm:MM}, it is enough to show
that 
\[\binom{|V_{i_1}|}{a_1}\binom{|V_{i_2}|}{a_2}\cdots
\binom{|V_{i_l}|}{a_l} = |V_1V_2\cdots V_k|.\]
By definition of $V_1\cdots V_k$, $a_t$ of
the elements of $x_1x_2 \cdots x_k \in V_1V_2 \cdots
V_k$ belong to $V_{i_t}$, and these $a_t$ elements
are distinct.  So, there are $\binom{|V_{i_t}|}{a_t}$
ways to pick these $a_t$ elements.  Since $V_{i_t} \cap V_{i_j} = \emptyset$ for all $i\neq j$, the result
now follows.
\end{proof}

We get a lower
bound on the independence number for 
any token graph.

\begin{corollary}\label{cor:aa}
If $G$ is a graph with independence number $\alpha(G)$, then 
\[\binom{\alpha(G)}{k} \leq \alpha(T_k(G)).\]
\end{corollary}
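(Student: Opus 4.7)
The plan is to deduce this lower bound as an immediate specialization of Corollary~\ref{sizeofalpha}. The key observation is that the hypothesis on the family $V_1,\ldots,V_k$ in Corollary~\ref{sizeofalpha} --- namely that any two members are either disjoint or identical --- is trivially met if we simply repeat a single independent set of $G$ $k$ times.

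Concretely, I would let $V \subseteq V(G)$ be a maximum independent set of $G$, so $|V| = \alpha(G)$, and set $V_1 = V_2 = \cdots = V_k = V$. Then the condition $V_i = V_j$ holds for every pair of indices, so the hypothesis of Corollary~\ref{sizeofalpha} is satisfied. There is only one distinct subset appearing in the list, namely $V_{i_1} = V$, and it appears $a_1 = k$ times. Substituting into the bound given by Corollary~\ref{sizeofalpha} yields
\[
\binom{|V|}{k} = \binom{\alpha(G)}{k} \leq \alpha(T_k(G)),
\]
which is exactly the desired inequality.

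The only potential subtlety is the degenerate case $\alpha(G) < k$. In that situation $\binom{\alpha(G)}{k} = 0$ by the standard convention on binomial coefficients, and the inequality holds vacuously (indeed $\alpha(T_k(G)) \geq 1$ whenever $T_k(G)$ is nonempty, i.e., whenever $k \leq n$). So no separate argument is needed. There is no real obstacle here --- the work has already been done in Theorem~\ref{thm:MM} and Corollary~\ref{sizeofalpha}, and this corollary is essentially recording the cleanest special case of that construction.
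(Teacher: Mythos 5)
Your proposal is correct and is essentially identical to the paper's own proof: the paper also takes a maximum independent set $W$ with $|W|=\alpha(G)$ and applies Corollary~\ref{sizeofalpha} with $V_1=\cdots=V_k=W$. Your remark on the degenerate case $\alpha(G)<k$ is a harmless extra observation.
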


\begin{proof}
Let $W \subseteq V(G)$ be the independent 
set of $G$ with $|W| = \alpha(G)$,
and apply Corollary \ref{sizeofalpha} with
$V_1 = \cdots = V_k = W$.  
\end{proof}

\begin{remark}\label{rem:lb}
The lower bound in Corollary~\ref{cor:aa} is sharp. In particular,   $\alpha(K_{1,n-1})=n-1$ and, in~\cite{ACLR}, it was determined that $\alpha(T_k(K_{1,n-1}))=\binom{n-1}{k}$. When $k=2$ and 
$G$ is not isomorphic to $K_{1,n-1}$, 
then the lower bound in Corollary~\ref{cor:aa} can be improved.  In particular,
$\alpha(T_2(G)) \geq \binom{\alpha(G)}{2} + \left\lfloor \frac{n - \alpha(G)}{2} \right\rfloor$, as first shown in
\cite[Theorem 2.7]{DMSA}.  
\end{remark}

When $k=2$, Theorem \ref{thm:MM} gives us
the following consequences for
the $2$-token graphs of some family of bipartite
graphs.  In particular, we recover
some of the formulas %first found
in \cite{ACLR}:

\begin{corollary}\label{cor:bipart}
If $G$ is a bipartite graph on $n$ vertices with bipartition $V(G)=V_1 \cup V_2$ such that $|V_1|=|V_2| =\alpha(G)
=\frac{n}{2}$, 
then \[\alpha(T_2(G)) =  \frac{n^2}{4}.\]
In particular, $\alpha(T_2(P_{2n})) = \alpha(T_2(C_{2n}))
= \alpha(T_2(K_{n,n}))= n^2$.
\end{corollary}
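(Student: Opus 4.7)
The plan is to combine the upper bound from Theorem~\ref{qbound} with the lower bound construction from Theorem~\ref{thm:MM} (via Corollary~\ref{sizeofalpha}); they will meet precisely at $n^2/4$.

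For the upper bound, I would simply plug $k=2$ and $\alpha(G)=n/2$ into Theorem~\ref{qbound} to get
\[
\alpha(T_2(G)) \;\leq\; \tfrac{1}{2}\binom{n}{1}\alpha(G) \;=\; \tfrac{1}{2}\cdot n\cdot \tfrac{n}{2} \;=\; \tfrac{n^2}{4}.
\]
Note that $G$ has no isolated vertices because every vertex of a bipartite graph with $\alpha(G)=n/2$ must be adjacent to something on the other side (otherwise adding that vertex to the opposite independent class would produce an independent set of size $n/2+1$), so Theorem~\ref{qbound} applies.

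For the lower bound, I would use $V_1$ and $V_2$ themselves, which are independent sets since $G$ is bipartite, and which satisfy $V_1\cap V_2=\emptyset$. Applying Theorem~\ref{thm:MM} with these two sets shows that $V_1V_2$ is an independent set of $T_2(G)$, and by Corollary~\ref{sizeofalpha},
\[
\alpha(T_2(G)) \;\geq\; \binom{|V_1|}{1}\binom{|V_2|}{1} \;=\; \tfrac{n}{2}\cdot\tfrac{n}{2} \;=\; \tfrac{n^2}{4}.
\]
Combining the two inequalities gives $\alpha(T_2(G)) = n^2/4$.

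For the ``In particular'' clause, I would just verify that each of $P_{2n}$, $C_{2n}$, and $K_{n,n}$ is a bipartite graph on $2n$ vertices whose two color classes both have cardinality $n$ and whose independence number equals $n$ (the standard alternating-vertex independent set works in all three cases). Substituting $2n$ in place of $n$ in the formula yields $(2n)^2/4 = n^2$. There is no real obstacle here; the only thing to double-check is the hypothesis-matching step that $\alpha$ equals half the vertex count in these families, which is immediate.
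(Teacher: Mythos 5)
Your proposal is correct and follows essentially the same route as the paper: the lower bound comes from the independent set $V_1V_2$ via Theorem~\ref{thm:MM}, and the upper bound from Theorem~\ref{qbound} with $k=2$ and $\alpha(G)=n/2$. Your explicit check that $\alpha(G)=n/2$ forces $G$ to have no isolated vertices (so that Theorem~\ref{qbound} applies) is a small detail the paper's proof passes over silently, but it does not change the argument.
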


\begin{proof}
 By Theorem~\ref{thm:MM}, $V_1V_2$ is an independent
 set of $T_2(G)$, and furthermore, $|V_1V_2| = |V_1||V_2|$
 since $V_1 \cap V_2 = \emptyset$, and thus
$\alpha(T_2(G))\geq |V_1||V_2|$. 
By Theorem~\ref{qbound}, $\alpha(T_2(G)) \leq  \frac {n\alpha(G)}{2} = \frac {2|V_1||V_2|}{2}$.
Therefore $\alpha(T_2(G)) = |V_1||V_2| = \frac{n^2}{4}.$

The last statement follows immediately since each
of the listed bipartite  graphs have $2n$ vertices with
a bipartition $V_1 \cup V_2$ such that $|V_1|=|V_2| =n$.
\end{proof}

The next result follows from Theorem~\ref{thm:MM} by noting that 
 if $V_1,V_2,V_3,V_4,V_5$ are disjoint  sets of $G$, then 
 no vertex of $V_1V_2$ will be adjacent to any vertex in $V_3V_4\cup V_5V_5$
  in $T_2(G)$.  

\begin{corollary}\label{cor:VVV}
Let $G$ be a graph containing disjoint independent
sets $V_1,V_2,\ldots,V_{k}$. If $k$ is even, 
then $V_1V_2 \cup V_3V_4 \cup \cdots V_{k-1}V_{k}$
is an independent set of $T_2(G)$. If $k$ is odd, 
then $V_1V_2 \cup V_3V_4 \cup \cdots V_{k-2}V_{k-1} \cup V_kV_k$ 
is an independent set of $T_2(G)$. 
\end{corollary}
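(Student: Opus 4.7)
The plan is to view the proposed independent set as a disjoint union of ``pieces'' of the form $V_{2i-1}V_{2i}$, together with $V_kV_k$ in the odd case, and to argue (a) each piece is already independent in $T_2(G)$, and (b) no edge of $T_2(G)$ can connect two different pieces. Both parts reduce essentially to bookkeeping once Theorem~\ref{thm:MM} and the definition of adjacency in $T_2(G)$ are in hand.

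First, I would apply Theorem~\ref{thm:MM} to each piece individually. For each index $i$, the two sets $V_{2i-1}, V_{2i}$ are disjoint independent sets, so the theorem (with $k=2$) gives that $V_{2i-1}V_{2i}$ is an independent set of $T_2(G)$. In the odd case, $V_kV_k$ is independent by the same theorem applied with $V_1 = V_2 = V_k$, which falls under the clause ``$V_i = V_j$'' in the hypothesis.

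Next I would handle cross-piece non-adjacency. Recall that vertices of $T_2(G)$ are $2$-subsets, so for any two such subsets $u$ and $w$ the symmetric difference $u \triangle w$ has size $0$, $2$, or $4$, and an edge in $T_2(G)$ requires $|u \triangle w| = 2$. Take $u \in V_{2i-1}V_{2i}$ and $w$ in a different piece, namely $V_{2j-1}V_{2j}$ with $j \neq i$, or $V_kV_k$ in the odd case. Then the indices $\{2i-1, 2i\}$ used to build $u$ are disjoint from the indices used to build $w$, and since the $V_\ell$'s are pairwise disjoint in $V(G)$, we get $u \cap w = \emptyset$. Hence $|u \triangle w| = 4 \neq 2$, so $u$ and $w$ are non-adjacent in $T_2(G)$.

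Combining the two observations, every pair of vertices in the proposed set is non-adjacent, so the set is independent. The only thing to be slightly careful about is the odd case, where one must remember that Theorem~\ref{thm:MM} allows $V_i = V_j$, which is exactly what lets $V_kV_k$ be a legitimate piece; beyond that there is no real obstacle.
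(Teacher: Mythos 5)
Your proposal is correct and follows essentially the same route as the paper, which justifies the corollary in the sentence preceding its statement: Theorem~\ref{thm:MM} handles each piece $V_{2i-1}V_{2i}$ (and $V_kV_k$), and pairwise disjointness of the $V_\ell$ forces any two vertices from different pieces to have symmetric difference of size four, hence be non-adjacent. You simply spell out the bookkeeping that the paper leaves implicit.
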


\begin{corollary}
If $G$ is the complete multipartite graph $K_{n_1,n_2,\ldots,n_k}$ of order $n$ with $k$ even,
and if $n_i=\frac{n}{k}$ for $1\leq i \leq k$, then $\alpha(T_2(G))=\frac{n^2}{2k}$. 
\end{corollary}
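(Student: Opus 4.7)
The plan is to sandwich $\alpha(T_2(G))$ between matching lower and upper bounds that come directly from results already established in the paper, namely Corollary~\ref{cor:VVV} and Theorem~\ref{qbound}.

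For the lower bound, I would exploit the natural $k$-partition $V(G) = V_1 \cup V_2 \cup \cdots \cup V_k$ of the complete multipartite graph: each part $V_i$ is an independent set of $G$, the parts are pairwise disjoint, and by hypothesis $|V_i| = n/k$ for every $i$. Since $k$ is even, Corollary~\ref{cor:VVV} applies and yields the independent set
\[ V_1V_2 \cup V_3V_4 \cup \cdots \cup V_{k-1}V_k \]
in $T_2(G)$. The pairwise disjointness of the parts guarantees that these $k/2$ pieces are mutually disjoint and that $|V_{2i-1}V_{2i}| = |V_{2i-1}|\cdot |V_{2i}| = (n/k)^2$, so
\[ \alpha(T_2(G)) \;\geq\; \frac{k}{2} \cdot \left(\frac{n}{k}\right)^{\!2} \;=\; \frac{n^2}{2k}. \]

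For the upper bound, I would first observe that in a complete multipartite graph every independent set is contained in a single part, hence $\alpha(G) = \max_i |V_i| = n/k$. Plugging this into Theorem~\ref{qbound} with $k=2$ gives
\[ \alpha(T_2(G)) \;\leq\; \frac{1}{2}\binom{n}{1}\alpha(G) \;=\; \frac{n}{2}\cdot \frac{n}{k} \;=\; \frac{n^2}{2k}. \]
Comparing the two bounds forces equality, which establishes the claim.

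There is no real obstacle here: the statement is designed so that the independent-set construction of Corollary~\ref{cor:VVV} is tight against Theorem~\ref{qbound}. The only point to verify carefully is that $\alpha(G) = n/k$ for the balanced complete multipartite graph (a standard fact), and that the hypothesis ``$k$ even'' is exactly what is needed to apply the even case of Corollary~\ref{cor:VVV} so that no ``leftover'' $V_jV_j$ term, which would contribute only $\binom{n/k}{2}$ rather than $(n/k)^2$, appears in the construction.
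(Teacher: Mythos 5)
Your proposal is correct and follows exactly the paper's own argument: the lower bound $\frac{k}{2}\left(\frac{n}{k}\right)^2$ from the construction of Corollary~\ref{cor:VVV} applied to the $k$ balanced parts, matched against the upper bound $\frac{1}{2}\binom{n}{1}\alpha(G)$ from Theorem~\ref{qbound} with $\alpha(G)=n/k$. The paper's proof is just a terser version of the same two steps, so there is nothing to add.
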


\begin{proof} By Corollary~\ref{cor:VVV}, 
$\alpha(T_2(G))\geq \frac{n^2}{2k}.$ Equality follows
from Theorem~\ref{qbound}.
\end{proof}

We now give some constructions of
maximal independent sets in $T_2(G)$;  this
enables us to derive lower bounds on $\alpha(T_2(G))$ for specific graphs.
For the following, if $A,B$ are disjoint independents sets
of a graph $G$, we define $$\phi(A,B) = \{\,x \in A \mid B\cup \{x\}  \mbox{\rm \ is an independent set}\}.$$
In Theorem~\ref{thm:VVVV}, we use a partition (in fact, a coloring) of the vertex set of a graph $G$ to obtain a maximal independent set of $T_2(G)$. The 
condition that 
 $\phi(V_j,V_i)=\emptyset$ when $j >i$ implies that the
partition $V_1\cup\cdots\cup V_k$ is constructed so that $V_i$ is a maximal independent set on 
$G\backslash(V_1\cup\cdots\cup V_{i-1})$, 
for $1\leq i\leq k-1$, with $V_0=\emptyset$.

\begin{theorem}\label{thm:VVVV}
Let $G$ be a graph and $V_1 \cup V_2 \cup \cdots \cup V_k$ be a partition
of $V(G)$ into independent sets such that 
 $\phi(V_j,V_i)=\emptyset$ when $j >i$.
If $k$ is even, let 
\begin{eqnarray*}
H &=& \left( V_1V_2 \cup V_3V_4 \cup \cdots \cup V_{k-1}V_k \right) \\
&& \qquad \cup  \left(\phi(V_1,V_2)\phi(V_1,V_2)\cup \cdots \cup \phi(V_{k-1},V_k)\phi(V_{k-1},V_k)\right).\end{eqnarray*}
If $k$ is odd, let 
\begin{eqnarray*}
H &=& \left( V_1V_2 \cup V_3V_4\cup \cdots \cup V_{k-2}V_{k-1}\right) \cup V_kV_k \\
&& \qquad \cup  \left(\phi(V_1,V_2)\phi(V_1,V_2)\cup \cdots \cup \phi(V_{k-2},V_{k-1})\phi(V_{k-2},V_{k-1})\right).
\end{eqnarray*}
Then $H$  is a maximal independent set of $T_2(G).$ 
\end{theorem}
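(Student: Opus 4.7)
The plan is to verify separately that $H$ is an independent set of $T_2(G)$ and that $H$ is maximal, treating the even and odd $k$ cases uniformly.

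For independence, each individual block---$V_{2m-1}V_{2m}$, $\phi(V_{2m-1},V_{2m})\phi(V_{2m-1},V_{2m})$, and (when $k$ is odd) $V_kV_k$---is already independent by Theorem~\ref{thm:MM}. Two blocks indexed by different pairs $m\neq m'$ use disjoint subsets of $V(G)$ (since $V_1,\ldots,V_k$ partition $V(G)$), so their vertices in $T_2(G)$ cannot have symmetric difference of size two. The only interaction requiring actual work is between $V_{2m-1}V_{2m}$ and $\phi(V_{2m-1},V_{2m})\phi(V_{2m-1},V_{2m})$, which share $V_{2m-1}$. If $\{u,v\}$ in the first (with $u\in V_{2m-1},\ v\in V_{2m}$) were adjacent to $\{x,y\}$ in the second, then since $x,y\in V_{2m-1}$ the symmetric difference must equal $\{v,z\}$ for some $z\in\{x,y\}\cap \phi(V_{2m-1},V_{2m})$; but then $z$ would have a neighbor $v\in V_{2m}$, contradicting $z\in\phi(V_{2m-1},V_{2m})$.

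For maximality, take $\{a,b\}\notin H$ with $a\in V_i$, $b\in V_j$, $i\leq j$. The guiding idea is that $\phi(V_j,V_i)=\emptyset$ for $j>i$ means every vertex of $V_j$ has a neighbor in each earlier $V_i$, which lets us swap one endpoint of $\{a,b\}$ for a carefully chosen neighbor so as to land in a pair block $V_{2m-1}V_{2m}\subseteq H$. Let $m$ be such that $a\in V_{2m-1}\cup V_{2m}$, and let $\ell$ be the partner index of the class containing $a$. When $i<j$, the assumption $\{a,b\}\notin H$ rules out $b\in V_\ell$, so $j>\ell$ and $b$ has a neighbor $b'\in V_\ell$; then $\{a,b'\}\in V_{2m-1}V_{2m}\subseteq H$ is adjacent to $\{a,b\}$ via $\{b,b'\}$. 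When $i=j=2m$ is even, $\phi(V_{2m},V_{2m-1})=\emptyset$ furnishes a neighbor $a'\in V_{2m-1}$ of $a$, and $\{a',b\}\in V_{2m-1}V_{2m}\subseteq H$. When $i=j=2m-1$ is odd, the hypothesis $\{a,b\}\notin \phi(V_{2m-1},V_{2m})\phi(V_{2m-1},V_{2m})$ forces some endpoint, say $a$, to have a neighbor $a'\in V_{2m}$, and again $\{a',b\}\in V_{2m-1}V_{2m}$. Finally, when $k$ is odd, the case $i=j=k$ is absorbed by $V_kV_k\subseteq H$ (so there is nothing to check), while $j=k>i$ falls under the $i<j$ argument verbatim.

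The main obstacle is the bookkeeping of the maximality case analysis, particularly ensuring that every configuration of $(i,j)$ is rescued by exactly one of the swap recipes above, and that the odd-$k$ tail $V_kV_k$ does not interfere with the pairing structure. Once the role of $\phi(V_j,V_\ell)=\emptyset$ as a \emph{swap license} is isolated, each case reduces to a one-line verification.
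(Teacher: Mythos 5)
Your proposal is correct and follows essentially the same route as the paper: independence is reduced to the disjointness of the paired classes plus the observation that a vertex of $\phi(V_{2m-1},V_{2m})$ has no neighbour in $V_{2m}$, and maximality is proved by the same case analysis on $(i,j)$, using $\phi(V_j,V_\ell)=\emptyset$ to swap one endpoint into the partner class and land in a block of $H$. The paper phrases maximality contrapositively (if $H\cup\{xy\}$ is independent then $xy\in H$) but the swaps invoked are identical to yours.
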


\begin{proof} Using Corollary~\ref{cor:VVV} and the fact that
$V_1V_1\cup V_2V_2\cup \phi(V_1,V_2)$ is an independent set, it follows that $H$ is an independent set. 
We will show that $H$ is maximal.
Suppose $H\cup \{xy\}$ is an independent set for some $xy\in V(T_2(G))$. 
We will demonstrate that $xy\in H$. 

Suppose $x,y\in V_i$ for some $i$. 
We consider three cases. Case 1. Suppose $V_{i-1}V_{i} \subseteq H$. 
Since $\phi(V_{i}V_{i-1}) =\emptyset$, $x$ is adjacent to some
vertex $w\in V_{i-1}$ in $G$. But then $yw$ is adjacent to $yx$ in $T_2(G)$,
contradicting the fact that $H\cup \{xy\}$ is an independent set. Case 2. Suppose 
$V_{i}V_{i+1} \subseteq H$. Then both $x$ and $y$ can not be adjacent to
any vertex in $V_{i+1}$ in $G$, hence $xy\in \phi(V_iV_{i+1})\phi(V_iV_{i+1}) \subseteq H$.  
Case 3. Suppose $V_iV_i \subseteq H$. In this case, $k$ is odd, and $i=k$, in which case
$xy\in H$.  

Suppose $x\in V_i$ and $y \in V_{j}$ for some $j>i$. 
Case 1. Suppose $V_{i-1}V_i \subseteq H$.  Since $H\cup \{xy\}$ is an independent set, 
$xy$ is not adjacent to any vertex in $xV_{i-1}$ in $T_2(G)$. But then $y$ is not
adjacent to any vertex in $V_{i-1}$ in $G$, contradicting the fact that
$\phi(V_j,V_{i-1})=\emptyset$.  
Case 2. Suppose $V_iV_{i+1} \subseteq H$. If $j=i+1$, the $xy \in H$. So suppose $j>i+1$. Then, since $\phi(V_j,V_{i+1})=\emptyset$, $y$ is adjacent to at least one vertex $w\in V_{i+1}$ in $G$. But then $xy$ is adjacent to
$xw \in V_iV_{i+1}$ in $T_2(G)$, contradicting the fact that $H \cup \{xy\}$ is an independent set. 
\end{proof}\vspace{-0.2in}
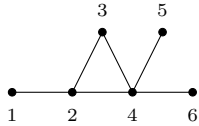
\begin{figure}[ht]
\begin{center}
\begin{tiny}
 \begin{tikzpicture}
	\vertexdef
	 \node[place] (1) at (0,0){}; 
            \node at (0,-0.3){$1$};        
      \node[place] (2) at (0.8,0){}; 
            \node at (0.8,-0.3){$2$};        
      \node[place] (4) at (1.6,0){};  
          \node at (1.6,-0.3){$4$};  
		\node[place] (3) at (1.2,0.8){};  
          \node at (1.2,1.1){$3$};           
        \node[place] (6) at (2.4,0){};    
		  \node at (2.4,-0.3){$6$};          
        \node[place] (5) at (2,0.8){};    
		  \node at (2,1.1){$5$};          
        \draw (1) -- (2);
		\draw (2) -- (3);
	    \draw (2) -- (4);
	    \draw (3) -- (4);
	    \draw (4) -- (6);
	    \draw (4) -- (5);
	    \end{tikzpicture}
      \end{tiny}  
   \end{center}
   \caption{The graph $G$ in Example \ref{ex:CC}.}\label{fig:CC}  
	\end{figure}
\begin{example}\label{ex:CC}
Consider the graph $G$ in Figure~\ref{fig:CC}. Let 
$V_1=\{1,3,5,6\}$, $V_2=\{2\}$ and $V_3=\{4\}.$
Then $\phi(V_1,V_2)=\{5,6\}$ and 
$H=V_1V_2 \cup \phi(V_1,V_2)\phi(V_1,V_2) = 
\{12,23,25,26,56\}$ is a  maximal independent set
in $T_2(G)$ by
Theorem~\ref{thm:VVVV}. 
If $V_1=\{2,5,6 \}, V_2=\{1,4 \}$ and $V_3=\{3\}$,
then 
applying
Theorem~\ref{thm:VVVV}
we get 
$H= \{12,15,16,24,45,46\}$ is an even larger maximal independent set. Further, if $V_1=\{2,5,6\}$, $V_2=\{1,3\}$, and
$V_3=\{4\}$, then $\phi(V_1,V_2)=\{5,6\}$
and $H=\{12,15,16,23,35,36,56\}$ is an even
larger
maximal independent set of $T_2(G)$.
\end{example}
 
The next theorem provides another construction
of a maximal
independent set in $T_2(G)$,
starting with a vertex colouring of $G$. 
\begin{theorem}\label{thm:VVE}
Let $G$ be a graph with a vertex partition into 
independent sets $V_1, V_2, ..., V_k$  such that $\phi(V_j, V_i) = \emptyset$ when $j > i$. 
Let $E$ be a maximal set of edges from $E(G)$ such that:
\begin{enumerate}
    \item\label{c1} If $e=\{u,r\} \in E$ and $u \in V_i$ and $r \in V_j$, then, $e$ is an isolated edge in 
    %the induced subgraph 
    $G[V_i \cup V_j]$.
    \item\label{c2} If $e_1, e_2 \in E$ share a common endpoint in $G$, then there is no triangle in $G$ containing $e_1$ and $e_2$. 
\end{enumerate}
Then $A = V_1V_1 \cup V_2V_2 \cup \cdots\cup V_kV_k \cup E$ is a maximal independent set in $T_2(G)$.
\end{theorem}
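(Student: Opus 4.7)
The plan is to establish independence and maximality of $A$ separately, in both parts by case analysis on where the two vertices of $T_2(G)$ in question lie within the decomposition $A = V_1V_1 \cup \cdots \cup V_kV_k \cup E$.

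For independence, I take two distinct vertices $p, q \in A$ and show that their symmetric difference is not an edge of $G$. Three types of interaction arise. First, if both $p$ and $q$ come from blocks $V_aV_a$ and $V_bV_b$, then when $a=b$ the symmetric difference lies inside the independent set $V_a$, and when $a\neq b$ the disjointness of the parts forces $p\cap q=\emptyset$ and hence $|p\triangle q|=4$. Second, if both $p,q \in E$ share an endpoint in $G$, then condition (2) prevents them from closing a triangle, so their symmetric difference is a non-edge. Third, if $p=\{x,y\}\in V_aV_a$ and $e=\{u,r\}\in E$ share an element, say $x=u\in V_i$ (so that $V_a=V_i$, and the remaining pair $\{y,r\}$ lies in $V_i\cup V_j$, where $r\in V_j$), then condition (1), i.e., isolatedness of $e$ in $G[V_i\cup V_j]$, forbids $\{y,r\}$ from being an edge.

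For maximality, I suppose $xy\notin A$ with $A\cup\{xy\}$ still independent and produce a vertex of $A$ adjacent to $xy$ in order to obtain a contradiction. Writing $x\in V_i$, $y\in V_j$ with $i\le j$, the case $i=j$ is immediate because then $xy\in V_iV_i\subseteq A$. If $i<j$ and $\{x,y\}$ is a non-edge of $G$, the hypothesis $\phi(V_j,V_i)=\emptyset$ supplies a $w\in V_i$ adjacent to $y$, necessarily with $w\neq x$; the vertex $xw\in V_iV_i\subseteq A$ is then adjacent to $xy$ through the edge $\{y,w\}$. If $i<j$ and $\{x,y\}$ is an edge of $G$ outside $E$, then the maximality of $E$ forces adding $\{x,y\}$ to $E$ to violate (1) or (2): a violation of (1) supplies an edge at $x$ going to $V_j$, or at $y$ going to $V_i$, producing a pair in $V_jV_j$ or $V_iV_i$ adjacent to $xy$; a violation of (2) supplies some $e'\in E\subseteq A$ completing a triangle with $\{x,y\}$, and hence adjacent to $xy$.

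The only delicate step is the third case of the independence argument, where one must carefully place each remaining element of the symmetric difference in the correct part $V_i$ or $V_j$ before invoking the isolatedness of $e$ in $G[V_i\cup V_j]$; the maximality argument, while it branches further, reduces in each branch to a direct construction of a witness neighbour in $A$.
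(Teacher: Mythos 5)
Your proposal is correct and follows essentially the same route as the paper: independence via the three-way case analysis on which blocks of $A$ the two vertices lie in (using the independence of each $V_i$, condition (1) for the mixed case, and condition (2) for two edges of $E$), and maximality by exhibiting, for each $xy\notin A$, a neighbour of $xy$ in $A$ using $\phi(V_j,V_i)=\emptyset$ when $\{x,y\}\notin E(G)$ and the maximality of $E$ when $\{x,y\}\in E(G)\setminus E$. The only cosmetic difference is that you run the maximality argument as a direct case split (non-edge versus edge) where the paper assumes no neighbour exists and derives that $\{x,y\}$ could be added to $E$; your version has the small advantage of explicitly checking that a failure of condition (2) for $E\cup\{\{x,y\}\}$ also yields a witness neighbour in $A$.
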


\begin{proof}
We first show that $A$ is an independent set in $T_2(G)$. 
The subset $A \backslash E$ is an independent set since each $V_i$ is an independent set. 
Let $x = v_{ia}v_{ib} \in A \backslash E$ with $v_{ia},v_{ib} \in V_i$. 
Let $y \in E$. If $x,y$ do not share an anchor, then $x$ and $y$ are not adjacent. 
Thus, without loss of generality suppose $y = v_{ia}u$. 
By definition of $E$, $u \in V_j$ for some $j \neq i$. Further $\{v_{ia},u\}$ is an isolated edge in $G[V_i \cup V_j]$. 
Thus, $y$ is not adjacent to $x$ in $T_2(G)$.
 
Suppose now that $x,y \in E$. If $x,y$ do not share an anchor, then $x$ and $y$ are not adjacent. So suppose 
$x = uv$ and $y = ut$. By condition~(\ref{c2}),  $v$ and $t$ are not adjacent in $G$, 
and hence $x$ and $y$ are not adjacent in $T_2(G)$. Therefore $A$ is an independent set.

Now we show $A$ is maximal. 
Suppose $x = x_ix_j \not \in A$ for some $x_i \in V_i$ and $x_j \in V_j$. 
We know that $i \neq j$ (since otherwise, $x$ would be in $A$). Without loss of generality $i < j$. 

Suppose that no vertex of $A$ is adjacent to $x$ in $T_2(G)$. 
Then $x_i$ is the only possible neighbour of $x_j$ in $V_i$, and $x_j$ is the only possible neighbour of $x_i$ in $V_j$. Indeed, 
suppose $x_i$ has a neighbour $z \neq x_j$ with $z \in V_j$. Then $x_ix_j$ is adjacent to $x_jz \in V_jV_j \subseteq A$. Similarly, suppose 
$x_j$ has a neighbour $y \neq x_i$ and $y \in V_i$. Then, $x_ix_j$ is adjacent to $x_iy \in V_iV_i \subseteq A$.
Since $\phi(V_j, V_i) = \emptyset$, it follows that $x\in E$ and
thus $x$ is an isolated edge in $G[V_i\cup V_j]$. But this contradicts condition~(\ref{c1})
 given that
$E$ is maximal. Thus $x$ has a neighbour in $A$.  Therefore $A$ is a maximal independent set.
\end{proof}
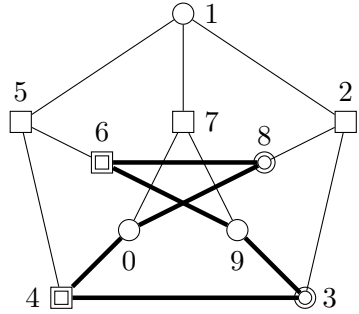
\begin{figure}[ht]
\centering
\begin{tikzpicture}[scale = 1.8]
\node (1) at (0,1.8)[placec][label=right:1]{};
%\node (1b) at (0,1.8)[places]{};
\node (2) at (1.2,1)[places][label=above:2]{};
\node (3) at (0.9,-0.3)[placecf][label=right:3]{};
\node (3b) at (0.9,-0.3)[placec]{};
\node (4) at (-0.9,-0.3)[places][label=left:4]{};
\node (4b) at (-0.9,-0.3)[placesf]{};
\node (5) at (-1.2,1)[places][label=above:5]{};
\node (6) at (-0.6,0.7)[places][label=above:6]{};
\node (6b) at (-0.6,0.7)[placesf]{};
\node (7) at (0,1)[places][label=right:7]{};
\node (8) at (0.6,0.7)[placecf][label=above:8]{};
\node (8b) at (0.6,0.7)[placec]{};

\node (9) at (0.4,0.2)[placec][label=below:9]{};
%\node (9b) at (0.4,0.2)[places]{};
\node (0) at (-0.4,0.2)[placec][label=below:0]{};
%\node (0b) at (-0.4,0.2)[places]{};
%\draw[color = magenta, very thick] [] (1) to (8);
\draw [] (1) to (2);
\draw [] (1) to (5);
\draw [] (1) to (7);
\draw [] (2) to (3);
\draw [] (2) to (8);
%\draw[color = magenta, very thick] [] (9) to (4);
\draw [line width=0.6mm] (3) to (4);
\draw [line width=0.6mm] (3) to (9);
%\draw[color = magenta, very thick] [] (10) to (5);
\draw [] (4) to (5);
\draw [line width=0.6mm] (4) to (0);
\draw [] (5) to (6);
\draw [line width=0.6mm] (6) to (8);
\draw [line width=0.6mm] (6) to (9);
\draw [] (7) to (0);
\draw [] (7) to (9);
\draw[line width=0.6mm] (8) to (0);
\end{tikzpicture}
\caption{A colouring of the Petersen graph with some bolded edges.}\label{fig:Pete}
\end{figure}

\begin{example} Let $G$ be the Petersen graph 
with independent sets $V_1 = \{0,1,9\}$, $V_2 = \{2,5,7\}$, $V_3 = \{4,6\}, V_4 = \{3,8\}$
and bolded edges $E=\{04,43,39,96,68,08\}$,
depicted in Figure~\ref{fig:Pete}. Note
that $\phi(V_j,V_i)=\emptyset$ for $j>i$.
    By Theorem~\ref{thm:VVE}, $V_1V_1 \cup V_2V_2 \cup V_3V_3 \cup V_4V_4 \cup E$ % = \{01, 09, 19, 25, 27, 57, 46, 38, 04, 43, 39, 96, 68, 80\}$ 
    is a maximal independent set of $T_2(G)$ of cardinality fourteen. 
     
     Note that $\phi(V_1,V_2) = \emptyset$ and $\phi(V_3, V_4) = \emptyset$. Thus, by
    Theorem~\ref{thm:VVVV}, $V_1V_2 \cup V_3V_4$ 
    is a maximal independent set of $T_2(G)$ of cardinality thirteen. 
Thus,  the $2$-token graph of the Petersen graph is not well-covered.

Since $G$ contains no triangles, the edges of $G$ form an independent set in $T_2(G)$ 
of cardinality fifteen. This set is maximal since the addition of any further
edge would form a triangle with the edges of $G$
(see Remark~\ref{rem:GKn}).

Further, if we take $U_1=\{5,8,9\}$, $U_2=\{0,2,6\}$, $U_3=\{3\}$, $U_4=\{4\}$, $U_5=\{1\}$, and $U_6=\{7\}$
and $F=\{39,23,04,45,15,12,07,79\}$, then one can check that the hybrid construction $U_1U_1\cup U_2U_2 \cup U_3U_4 \cup U_5U_6 \cup F$
is an independent set of $T_2(G)$ with cardinality sixteen. A computer check can verify that
$\alpha(T_2(G))=16.$   
\end{example}

%%%%%%%%%%%%%%%%%%%%%%%%%%%%%%%%%%%%%%%%%%%%%%%

\section{Characterization of bipartite graphs $G$ with $T_k(G)$ well-covered}\label{sec:bi}

In this section we characterize when
$T_k(G)$ is well-covered if $G$ is a connected
bipartite graph. Before we address bipartite graphs, we 
present the following result  based upon
\cite[Proposition 1]{Z}, which we will find useful.

\begin{theorem}\label{wellcovered}
Suppose $G$ is a graph and $I$ is an independent
set of $V(G)$ such that $I$ is not maximal.
If $G \backslash N[I]$ is not well-covered, then
$G$ is not well-covered.
\end{theorem}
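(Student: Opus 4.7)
The plan is to show directly that $I$ can be extended by any maximal independent set of $G \setminus N[I]$ to produce a maximal independent set of $G$. Once this extension property is in hand, two maximal independent sets of $G \setminus N[I]$ of distinct cardinalities (which exist by the non-well-cover hypothesis) translate immediately into two maximal independent sets of $G$ of distinct cardinalities.

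To carry this out, I would first fix an arbitrary maximal independent set $J$ of $G \setminus N[I]$ and show that $I \cup J$ is an independent set of $G$. This is immediate because $J \subseteq V(G) \setminus N[I]$, so no vertex of $J$ has a neighbour in $I$, and $J$ is internally independent (in $G \setminus N[I]$, hence in $G$), while $I$ itself is independent by assumption. Next I would verify that $I \cup J$ is in fact maximal in $G$. Given any $v \in V(G) \setminus (I \cup J)$, there are two cases: if $v \in N(I)$, then $v$ has a neighbour in $I$; otherwise $v$ lies in $V(G) \setminus N[I] = V(G \setminus N[I])$, and maximality of $J$ in $G \setminus N[I]$ forces $v$ to have a neighbour in $J$. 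In either case $(I \cup J) \cup \{v\}$ is not independent, so $I \cup J$ is maximal in $G$.

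With this extension lemma established, the theorem follows quickly. Since $G \setminus N[I]$ is not well-covered, it admits two maximal independent sets $J_1$ and $J_2$ with $|J_1| \neq |J_2|$. Then $I \cup J_1$ and $I \cup J_2$ are both maximal independent sets of $G$, and their cardinalities $|I| + |J_1|$ and $|I| + |J_2|$ are distinct because $I$ is disjoint from both $J_1$ and $J_2$. Hence $G$ is not well-covered.

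I do not anticipate a genuine obstacle here; the argument is essentially a one-line extension principle. The only subtle point worth flagging is the role of the non-maximality hypothesis on $I$: it is used only implicitly, to ensure that $V(G) \setminus N[I]$ is non-empty so that the assumption that $G \setminus N[I]$ is not well-covered is non-vacuous. The proof itself relies only on the existence of the two differently-sized maximal independent sets $J_1$ and $J_2$ in $G \setminus N[I]$.
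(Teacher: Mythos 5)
Your proof is correct and follows essentially the same route as the paper's: both rest on the observation that a maximal independent set $J$ of $G \backslash N[I]$ extends to a maximal independent set $I \cup J$ of $G$ (the paper phrases the argument as the contrapositive, while you argue the implication directly and spell out the maximality check that the paper merely asserts). No issues.
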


\begin{proof}
Since $I$ is not maximal, $G \backslash N[I]$
is not the empty graph.
We now prove the contrapositive statement.
Suppose that $W_1,W_2$ are two maximal 
independent sets in $G \backslash N[I]$.
Then $W_1 \cup I$ and $W_2 \cup I$ are independent
sets in $G$, and furthermore, they must
be maximally independent.  But since $G$ is well-covered, $|W_1 \cup I| = |W_2 \cup I|$, which
implies that $|W_1|=|W_2|$, i.e., $G\backslash N[I]$ is
well-covered.
\end{proof}

A bipartite graph with bipartition 
$V(G) = L \cup R$ is \emph{balanced} if $|L|=|R|$.

\begin{theorem}\label{dealbabipartition}\cite{ACLR}
and \cite[Proposition 12]{FFHHUW}
Let $G$ be a connected bipartite graph with bipartition $V(G)=L\cup R$. 
Then $T_k(G)$ is bipartite with bipartition 
$$    \{A \subseteq V(T_k(G))\ \mid \  |R \cap A| \, \text{is even}\ \} 
\cup    
    \{A \subseteq V(T_k(G))\ \mid \  |R \cap A| \, \text{is odd}\ \}.$$
\end{theorem}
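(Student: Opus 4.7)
The plan is to verify directly that no edge of $T_k(G)$ has both endpoints in the same claimed class; that is, across any edge of $T_k(G)$, the parity of the intersection with $R$ must flip.

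First, I would unpack the adjacency in $T_k(G)$: two vertices $A, B \in V(T_k(G))$ are adjacent exactly when $A \triangle B = \{x,y\}$ with $\{x,y\} \in E(G)$. Equivalently, one of $A,B$ is obtained from the other by replacing the single element $x$ by $y$, so they share a common anchor of size $k-1$.

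Next, I would invoke the bipartiteness of $G$: every edge of $G$ has one endpoint in $L$ and the other in $R$. So if $A$ and $B$ are adjacent in $T_k(G)$ and $A \triangle B = \{x,y\}$ with $\{x,y\} \in E(G)$, then without loss of generality $x \in L$ and $y \in R$. Writing the anchor as $C = A \cap B$, exactly one of $A,B$ contains $y$ and the other does not, while neither contains a new element of $R$ besides possibly $y$. Hence $\bigl||R \cap A| - |R \cap B|\bigr| = 1$, and in particular $|R \cap A|$ and $|R \cap B|$ have opposite parity.

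This shows that the two sets
\[
X_0 = \{A \in V(T_k(G)) : |R \cap A| \text{ is even}\}, \qquad X_1 = \{A \in V(T_k(G)) : |R \cap A| \text{ is odd}\}
\]
form a bipartition of $V(T_k(G))$ with no edge inside either class. Since every edge of $T_k(G)$ has one endpoint in each class, $T_k(G)$ is bipartite with the stated bipartition. There is no real obstacle here; the only thing to be careful about is the parity count, namely checking that replacing a single vertex along a bipartite edge changes $|R \cap \cdot|$ by exactly $\pm 1$ regardless of which side of the edge lies in $R$.
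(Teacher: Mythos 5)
Your proof is correct. The paper does not supply its own argument for this statement---it is quoted from \cite{ACLR} and \cite[Proposition 12]{FFHHUW}---so there is nothing internal to compare against; your direct parity check (an edge of $T_k(G)$ swaps exactly one token along an edge of $G$, which necessarily crosses the $L$--$R$ bipartition, so $|R\cap A|$ changes by exactly $\pm 1$) is the standard and complete argument, and it is exactly what the cited sources do.
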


We require a sequence of technical lemmas.

\begin{lemma}\label{swaplemma}
Let $G$ be a bipartite graph with $A_1,A_2 \in V(T_k(G))$. If $|A_1\cap L|=\ell$  
and $A_2$ is adjacent to $A_1$ in $T_k(G)$, 
then $|A_2\cap L|\in \{\ell-1,\ell+1\}.$
\end{lemma}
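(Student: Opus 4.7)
The plan is to reduce the statement directly to the definition of adjacency in $T_k(G)$ together with the bipartite structure of $G$. Since $A_1$ and $A_2$ are adjacent in $T_k(G)$, their symmetric difference $A_1 \triangle A_2$ is an edge $\{u,v\}$ of $G$. Because $G$ is bipartite with bipartition $V(G) = L \cup R$, exactly one endpoint of this edge lies in $L$ and the other in $R$; say $u \in L$ and $v \in R$.

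Next, I would observe that $|A_1| = |A_2| = k$, so $|A_1 \setminus A_2| = |A_2 \setminus A_1| = 1$. Combined with $A_1 \triangle A_2 = \{u,v\}$, there are only two cases: either $u \in A_1$ and $v \in A_2$, or $u \in A_2$ and $v \in A_1$. In the first case, passing from $A_1$ to $A_2$ removes the $L$-element $u$ and adds the $R$-element $v$, so $|A_2 \cap L| = \ell - 1$. In the second case, passing from $A_1$ to $A_2$ removes the $R$-element $v$ and adds the $L$-element $u$, so $|A_2 \cap L| = \ell + 1$. Either way $|A_2 \cap L| \in \{\ell - 1, \ell + 1\}$.

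There is no real obstacle here; the argument is essentially a bookkeeping check that exploits (i) the definition of edges in $T_k(G)$ via symmetric difference, and (ii) the fact that any edge of a bipartite graph straddles the bipartition. In fact, this lemma is the combinatorial reason behind Theorem~\ref{dealbabipartition}: moving along an edge of $T_k(G)$ changes the parity of $|A \cap L|$, which is precisely what one needs to show that $T_k(G)$ is itself bipartite with the parity classes described there.
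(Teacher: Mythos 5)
Your proof is correct, but it takes a different (and more self-contained) route than the paper's. The paper's proof first uses the shared anchor to conclude that $|A_2\cap L|=\ell+i$ for some $i\in\{-1,0,1\}$, and then rules out $i=0$ by appealing to Theorem~\ref{dealbabipartition} (the bipartiteness of $T_k(G)$ with parity classes, which is a cited external result). You instead argue directly from the definition of adjacency: $A_1\triangle A_2$ is an edge of $G$, every edge of a bipartite graph has one endpoint in $L$ and one in $R$, and the two cases for which endpoint is being removed from $A_1$ give $\ell-1$ and $\ell+1$ respectively. Your observation at the end is apt: your argument does not presuppose Theorem~\ref{dealbabipartition} but rather furnishes the parity-change mechanism that proves it, so your version avoids any risk of circularity and could serve as a proof of that theorem as well. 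The paper's route is shorter on the page because it leans on the quoted result; yours buys independence from it at essentially no extra cost.
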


\begin{proof}
Since $A_1,A_2$ are adjacent, they must share an anchor. 
Hence $|A_2\cap L| = \ell+i$ for some $i \in \{-1,0,1\}$. However, if $i = 0$, that would contradict Theorem~\ref{dealbabipartition}.
\end{proof}

\begin{lemma}\label{typek0degree}
Suppose $G$ is a connected bipartite graph with bipartition $V(G) = L \cup R$ and $|L| \geq k\geq 2$. 
If $A \in V(T_k(G))$ with 
$|A\cap L| = k$, then ${\rm deg}(A) \geq 2$.
\end{lemma}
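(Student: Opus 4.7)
The plan is to establish a bijection between the neighbors of $A$ in $T_k(G)$ and the edges of $G$ incident to $A$, then bound the latter quantity below by $k \ge 2$.

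Since $|A| = k = |A \cap L|$, the set $A$ sits entirely inside $L$. Any neighbor $A'$ of $A$ in $T_k(G)$ has the form $A' = (A \setminus \{x\}) \cup \{y\}$ where $\{x,y\} \in E(G)$ and $y \notin A$. Because $x \in A \subseteq L$ and $G$ is bipartite, the edge $\{x,y\}$ forces $y \in R$, which automatically ensures $y \notin A$. Thus neighbors of $A$ correspond to edges of $G$ with one endpoint in $A$ and the other in $R$.

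The next step is to check that this correspondence is injective. Suppose two edges $\{x_1,y_1\}$ and $\{x_2,y_2\}$ (each with $x_i \in A$, $y_i \in R$) give the same neighbor $A'$. Then $A \setminus A' = \{x_i\}$ and $A' \setminus A = \{y_i\}$; comparing the $L$-part and $R$-part separately (the bipartite structure is precisely what makes this work) yields $x_1 = x_2$ and $y_1 = y_2$. Therefore $\deg_{T_k(G)}(A)$ equals the number of edges of $G$ incident to $A$.

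Finally, because $A \subseteq L$ and $L$ is independent in $G$, no edge of $G$ has both endpoints in $A$, so the number of edges incident to $A$ equals $\sum_{x \in A} \deg_G(x)$. Since $G$ is connected and $|V(G)| \ge |L| \ge k \ge 2$, every vertex of $G$ has degree at least $1$, so this sum is at least $|A| = k \ge 2$. Combining this with the bijection yields $\deg_{T_k(G)}(A) \ge 2$.

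There is no serious obstacle here: the only delicate point is the injectivity of the correspondence between edges incident to $A$ and neighbors of $A$, which is handled cleanly by the bipartite hypothesis. The hypothesis $|L| \ge k$ is used precisely to guarantee that the subset $A \subseteq L$ exists, and the connectedness of $G$ together with $|V(G)| \ge 2$ is used to ensure every vertex in $A$ has positive degree.
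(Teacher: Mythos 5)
Your proof is correct and uses the same underlying mechanism as the paper's: since $A\subseteq L$, connectivity gives each vertex of $A$ a neighbour in $R$, and swapping a token along such an edge produces a neighbour of $A$ in $T_k(G)$, with distinct edges yielding distinct neighbours. The paper simply exhibits two such neighbours by picking two distinct vertices $x,y\in A$ (using $k\geq 2$), whereas you count all of them and in fact prove the slightly stronger bound $\deg(A)\geq k$; both arguments are sound.
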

\begin{proof}
Suppose $|A\cap L|=k$ and $A = A' \cup \{x,y\}\subseteq L$ for some $A'$ with $|A'|=k-2$. Since $G$ is
connected, let $u,v\in R$ be adjacent to $x$ and $y$, respectively.  {Note that $u$ and $v$
need not be distinct.}
Then, $A$ is adjacent to both $A' \cup \{x,v\}$ and $A' \cup \{u, y\}$. 
\end{proof}

\begin{lemma}\label{typek0notisolated}
Let $G$ be a connected bipartite graph with bipartition 
$V(G) = L \cup R$ and let $k \geq 2$. 
Suppose that $|L| \geq k$ and $|R| \geq 1$. 
If $A_1, A_2, \in V(T_k(G))$,  $A_1\neq A_2$ and
$|A_1\cap L| = |A_2\cap L| = k$,  then 
$A_1$ is not an isolated vertex 
in $T_k(G)\backslash N[A_2]$. 
\end{lemma}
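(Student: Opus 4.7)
The plan is to show that $A_1$ has a neighbor $B$ in $T_k(G)$ that is neither $A_2$ nor adjacent to $A_2$, which will place $B$ in $V(T_k(G)\setminus N[A_2])$ and witness that $A_1$ is not isolated there. A key preliminary observation is that $A_1,A_2\subseteq L$ (since they have full $L$-weight $k$), so by Lemma~\ref{swaplemma} every neighbor $B$ of $A_1$ satisfies $|B\cap L|=k-1$, i.e., $B$ is obtained from $A_1$ by replacing some $y\in A_1$ by a neighbor $u\in R$ of $y$ in $G$. In particular $A_1$ and $A_2$ are themselves non-adjacent, so $A_1$ is a vertex of $T_k(G)\setminus N[A_2]$.

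I would then split on $m:=|A_1\cap A_2|$. For any neighbor $B=(A_1\setminus\{y\})\cup\{u\}$ of $A_1$, a direct computation (using $u\in R$ and $A_2\subseteq L$) gives $|B\cap A_2|=|A_1\cap A_2|$ if $y\notin A_2$ and $|A_1\cap A_2|-1$ if $y\in A_2$. Since adjacency in $T_k(G)$ requires the intersection to have size $k-1$, the vertex $B$ is adjacent to $A_2$ only when $|B\cap A_2|=k-1$. When $m\leq k-2$, this cannot happen for any choice of neighbor, so any neighbor of $A_1$ works; such a neighbor exists by Lemma~\ref{typek0degree} (which applies because $|L|\geq k$).

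The remaining case is $m=k-1$. Here write $A_1=C\cup\{x\}$ and $A_2=C\cup\{x'\}$ with $|C|=k-1\geq 1$ (using $k\geq 2$) and $x,x'\in L\setminus C$ distinct. I would now choose $y\in C$ (possible since $C\neq\emptyset$) and, using that $G$ is connected with $|R|\geq 1$ so every vertex of $L$ has a neighbor in $R$, pick $u\in R$ with $\{y,u\}\in E(G)$. Set $B=(A_1\setminus\{y\})\cup\{u\}=(C\setminus\{y\})\cup\{x,u\}$. A short set calculation shows $B\cap A_2=C\setminus\{y\}$, of size $k-2$, so $B$ is not adjacent to $A_2$; and $B\neq A_2$ because $u\in B\setminus A_2$. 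Thus $B$ is the desired neighbor of $A_1$ in $T_k(G)\setminus N[A_2]$.

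The only place that requires care is case $m=k-1$: one must avoid the obvious neighbor $C\cup\{u\}$ of $A_1$ (obtained by swapping out $x$), which can be adjacent to $A_2$ when $\{u,x'\}\in E(G)$. Choosing instead to swap out an element of the common part $C$ cleanly sidesteps this obstacle and is the main idea of the argument.
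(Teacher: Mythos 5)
Your proposal is correct and follows essentially the same route as the paper: the paper fixes $l_1\in A_1\setminus A_2$, swaps a \emph{different} element $l_2$ of $A_1$ for a neighbour $r\in R$, and observes that the resulting vertex contains $l_1,r\notin A_2$ and hence meets $A_2$ in at most $k-2$ elements, which is exactly your construction in the critical case $|A_1\cap A_2|=k-1$. The paper simply carries out this one construction uniformly rather than splitting on $|A_1\cap A_2|$, but the key idea --- swap out a vertex of the common part while retaining a witness of $A_1\setminus A_2$ --- is identical.
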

\begin{proof}
By Lemma~\ref{swaplemma}, $A_1$ is not adjacent to $A_2$.
Since $A_1 \neq A_2$, without loss of generality suppose $A_1 = l_1l_2\cdots l_{k}$ and 
$l_1\not\in A_2$. Since $G$ is connected, there is a vertex $r\in R$ such that
$l_2$ is adjacent to $r$ in $G$. Thus 
$A_1$ is adjacent to $l_1rl_3\cdots l_{k}$. However, 
$A_2$ is not adjacent to $l_1rl_3\cdots l_{k}$ since $l_1,r\not\in A_2$. 
Thus $l_1rl_3\cdots l_{k} \not\in N[A_2]$. Therefore
$A_1$ is not an isolated vertex in $T_k(G)\backslash N[A_2]$.
\end{proof}

\begin{lemma}\label{lem:nbrs}
Let $G$ be a connected bipartite graph with bipartition 
$V(G) = L \cup R$ and let $k \geq 2$. Suppose that $|L| \geq k$ and $|R| \geq 2$. 
Let $A= l_1l_2l_3\cdots l_k \in V(T_k(G))$ with $|A\cap L|=k$
and $B \in V(T_k(G))$ with $|B\cap L| = k-2$. 
If $B$ is an isolated vertex in $T_k(G) \backslash N[A]$, then $|B\cap A|=k-2$ and,
after relabelling, $B = r_1r_2l_3\cdots l_k$ such that:
\begin{enumerate}
    \item\label{NL} $N(\{l_3,l_4,\ldots,l_k\}) \subseteq \{r_1,r_2\}$.
    \item\label{NR} $N(\{r_1,r_2\}) \subseteq \{l_1,l_2,l_3,\ldots,l_k\}$.
    \item\label{deg} ${\rm deg}(B) \geq 2$ in $T_k(G)$.
\end{enumerate}
\end{lemma}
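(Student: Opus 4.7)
The plan is to systematically analyse the neighbours of $B$ in $T_k(G)$ and use the isolation hypothesis to pin down the structure of $B$. Write $B = x_1\cdots x_{k-2}r_1r_2$ with $x_i \in L$ and $r_1,r_2 \in R$. By Lemma~\ref{swaplemma}, $A$ and $B$ are not adjacent because their $L$-counts differ by $2$, so the isolation hypothesis actually says every neighbour of $B$ in $T_k(G)$ lies in $N(A)$. Since $A \subseteq L$, every vertex of $N(A)$ has exactly $k-1$ elements in $L$.

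A would-be neighbour of $B$ obtained by swapping some $x_i$ for an $R$-vertex outside $B$ would have only $k-3$ elements in $L$, so no such neighbour can exist, forcing $N_G(x_i) \subseteq \{r_1,r_2\}$ for each $i$; this will give condition~(\ref{NL}) after relabelling. Next I would produce an actual neighbour of $B$: if both $r_1,r_2$ had all their $L$-neighbours inside $\{x_1,\ldots,x_{k-2}\}$, then the set $\{x_1,\ldots,x_{k-2},r_1,r_2\}$ would have no edges leaving it and, being of size $k < |L|+|R|$, this would contradict connectivity. So, after swapping indices if necessary, $r_1$ has a neighbour $w \in L\setminus B$, and then $C := (B\setminus\{r_1\})\cup\{w\}$ is a neighbour of $B$ and lies in $N(A)$, so $|C\cap A|=k-1$. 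Because $C$'s $L$-part is exactly $\{x_1,\ldots,x_{k-2},w\}$ and $A\subseteq L$, each of these must belong to $A$. In particular $|B\cap A|=k-2$, and we may relabel so that $B=r_1r_2l_3\cdots l_k$.

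Condition~(\ref{NR}) follows by applying the same analysis to every possible swap of $r_i$: any $w \in N_G(r_i)\setminus B$ yields a neighbour of $B$ which must lie in $N(A)$, forcing $w \in A$; the elements of $N_G(r_i)\cap B$ lie in $\{l_3,\ldots,l_k\}\subseteq A$ already. Consequently every neighbour of $B$ has the form $(B\setminus\{r_i\})\cup\{w\}$ with $w \in N_G(r_i) \cap \{l_1,l_2\}$, and so ${\rm deg}(B) = a_1+a_2$ where $a_i := |N_G(r_i)\cap\{l_1,l_2\}|$.

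To finish condition~(\ref{deg}) I need $a_1+a_2\geq 2$. The case $a_1=a_2=0$ is excluded by the same connectivity argument applied to $\{l_3,\ldots,l_k,r_1,r_2\}$. The main obstacle is the asymmetric case $a_1=0$ with $a_2=1$: if, say, $l_1\in N_G(r_2)$, then $C=\{l_1,l_3,\ldots,l_k,r_1\}$ is a neighbour of $B$ whose symmetric difference with $A$ is precisely $\{r_1,l_2\}$, and $a_1=0$ puts $l_2 \notin N_G(r_1)$, so this pair is not an edge and $C\notin N[A]$, contradicting isolation. Symmetry rules out $a_2=0, a_1=1$, so ${\rm deg}(B)\geq 2$. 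The step requiring the most care is precisely this one — choosing the swap of $B$ whose symmetric difference with $A$ is the non-edge forbidden by $a_1=0$; everything else is an iteration of the basic dichotomy on whether a proposed swap lands in $N(A)$.
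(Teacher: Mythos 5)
Your proof is correct, and its skeleton matches the paper's: isolation forces $N(B)\subseteq N(A)$, every neighbour of $A$ has exactly $k-1$ elements of $L$, so swapping an $L$-element out of $B$ is impossible (giving condition (1)), and swapping an $R$-element of $B$ for anything outside $A$ is impossible (giving condition (2) and $|B\cap A|=k-2$). Where you genuinely diverge is in the bookkeeping around existence of neighbours and the degree bound. The paper first uses connectivity of $T_k(G)$ to produce a common neighbour $C$ of $A$ and $B$, reads off $|B\cap A|=k-2$ from the structure of $C$, and then gets part (3) almost for free: $C=r_1l_2l_3\cdots l_k$ being adjacent to both $A$ and $B$ witnesses the two edges $\{l_1,r_1\}$ and $\{l_2,r_2\}$ of $G$, which immediately yield the two distinct neighbours $r_1l_2l_3\cdots l_k$ and $l_1r_2l_3\cdots l_k$ of $B$. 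You instead invoke connectivity of $G$ itself (twice: once to produce a first neighbour of $B$, once to exclude $a_1=a_2=0$) and finish $\deg(B)\geq 2$ by a case analysis on $a_1+a_2$, with the asymmetric case $(0,1)$ eliminated by exhibiting a neighbour of $B$ whose symmetric difference with $A$ is a non-edge. Both arguments are sound; the paper's route is shorter because the single common neighbour already encodes the two edges needed for the degree bound, while yours is more self-contained in that it never appeals to connectivity of the token graph, only of $G$.
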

\begin{proof}
Recall that since $G$ is connected, $T_k(G)$ is connected. 
In particular, $T_k(G)$ contains no isolated vertices. 
Suppose $B$ is an isolated vertex in $T_k(G) \backslash N[A]$.
Then $N(B) \subseteq N(A)$ and $A,B$ must share at least one common neighbour. 
By Lemma~\ref{swaplemma}, there is a vertex $C\in V(T_k(G))$ with $|C\cap L|=k-1$ adjacent 
to $B$ and $A$. 
Since $C$ must share an anchor with $A$, without loss of generality $C = r_1l_2l_3\cdots l_k$ for some $r_1\in R$. 
Since $B$ shares an anchor with $C$, by Lemmas~\ref{swaplemma} and~\ref{typek0degree}, $|B\cap A|=k-2$. Without loss of generality assume
that $B = r_1r_2l_3\cdots l_k$ for some $r_2\in R$.

Suppose $N(\{l_3,l_4,\ldots,l_k\}) \not\subseteq \{r_1,r_2\}$. Without loss of generality, 
suppose $l_3$ is adjacent to some $x \not \in \{r_1,r_2\}$. Then $B$ has a %lower 
neighbour $r_1r_2xl_4\cdots l_k$, but this vertex is not adjacent to $A$ by Lemma~\ref{swaplemma}. But then $B$ is not isolated in $T_k(G) \backslash N[A]$, a contradiction.
Therefore $N(\{l_3,l_4,\ldots,l_k\}) \subseteq \{r_1,r_2\}$.

Suppose $N(\{r_1,r_2\}) \not\subseteq \{l_1,l_2,\ldots,l_k\}$. Without loss of generality,
 suppose $r_2$ is adjacent to some $x \not \in \{l_1,l_2,\ldots,l_k\}$. Then 
 $B$ is adjacent to $r_1xl_3...l_k$, which is not adjacent to $A$, 
contradicting the fact that $B$ is isolated in $T_k(G) \backslash N[A]$.
Therefore $N(\{r_1,r_2\}) \subseteq \{l_1,l_2,\ldots,l_k\}$.

Finally, $B$ is adjacent to both $C$ and $l_1r_2l_3\cdots l_k$. 
Therefore deg$(B)\geq 2$.
\end{proof}

\begin{lemma}\label{deleteB} 
Let $G$ be a connected bipartite graph with bipartition $V(G) = L \cup R$ and let $k \geq 2$. 
Suppose that $|L| \geq k$, $|R| \geq 2$ 
{and $|L|+|R| \geq 5$}.
Suppose $A= l_1l_2l_3\cdots l_k \in V(T_k(G))$  with $|A\cap L|=k$ and 
$B \in V(T_k(G))$ with $|B\cap L|=k-2$. 
If $B$ is an isolated vertex in $T_k(G) \backslash N[A]$,
then there is 
no isolated vertex in $T_k(G)\backslash N[B]$. 
\end{lemma}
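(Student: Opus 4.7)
The plan is to argue by contradiction: suppose there is a vertex $C$ isolated in $T_k(G) \setminus N[B]$. Since $T_k(G)$ is connected, $C$ has at least one neighbour $D$ in $T_k(G)$, and by hypothesis $D \in N(B)$ (we cannot have $D = B$ since $C \not\sim B$). Because $D$ is a single swap away from both $C$ and $B$, the triangle inequality for symmetric difference yields $|C \triangle B| \leq 4$; combined with $|C \triangle B|$ even (as $|C|=|B|$), $C \neq B$, and $C \not\sim B$, this forces $|C \triangle B| = 4$, i.e.\ $|C \cap B| = k - 2$. Writing $S = C \cap B$, $U = C \setminus B$, and $T = B \setminus C$ (of sizes $k-2$, $2$, and $2$), the set $T$ is a $2$-subset of $B = \{r_1,r_2,l_3,\ldots,l_k\}$ in the notation of Lemma~\ref{lem:nbrs}.

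Next, a direct computation of $|D \triangle B|$ for each possible neighbour $D = C \triangle \{x,y\}$ of $C$ (arising from an edge $\{x,y\} \in E(G)$ with $x \in C$, $y \notin C$) yields three necessary conditions for $C$ to be isolated in $T_k(G) \setminus N[B]$: (a) $N(x) \subseteq C$ for every $x \in S$; (b) $N(x) \subseteq S \cup U \cup T$ for every $x \in U$; and (c) for every edge $\{x, y\} \in E(G)$ with $x \in U$ and $y \in T$, the opposite pair $\{x', y'\}$ (where $U = \{x, x'\}$ and $T = \{y, y'\}$) must also be an edge of $G$. I would then split on the choice of $T$. When $T$ contains some $l_j \in \{l_3, \ldots, l_k\}$, Lemma~\ref{lem:nbrs}(\ref{NL}) gives $N(l_j) \subseteq \{r_1, r_2\}$; combined with (a), (c) and the no-isolated-vertex property of $G$, this forces $l_j$ to have no neighbour in $G$, a contradiction. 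When $T = \{r_1, r_2\}$ and $k \geq 3$, applying (a) to any $l_i \in S$ with $i \geq 3$ together with Lemma~\ref{lem:nbrs}(\ref{NL}) gives $N(l_i) \subseteq C \cap \{r_1, r_2\} = \emptyset$, again impossible.

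The main obstacle is the remaining sub-case $T = \{r_1, r_2\}$ with $k = 2$, where condition (a) is vacuous. Here I would use (b), (c), Lemma~\ref{lem:nbrs}(\ref{NR}), and the no-isolated-vertex property of $G$ to constrain the neighbours in $G$ of each element of $U$ very tightly, then case-split on whether $U = \{l_1, l_2\}$ or $U$ contains a vertex outside $\{l_1, l_2\}$. Either the connected component of $l_1$ in $G$ is forced into the $4$-vertex set $\{l_1, l_2, r_1, r_2\}$, or it is forced into an even smaller set; in both scenarios, the hypothesis $|L|+|R| \geq 5$ yields a vertex of $G$ outside this component, contradicting the connectivity of $G$.
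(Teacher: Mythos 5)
There is a genuine gap at the very first step of your case analysis. You claim that $|C\triangle B|\le 4$, $|C\triangle B|$ even, $C\neq B$, and $C\not\sim B$ together force $|C\triangle B|=4$. This inference is invalid: two $k$-sets with $|C\triangle B|=2$ are adjacent in $T_k(G)$ only when the two differing elements form an edge of $G$, so $C\not\sim B$ is perfectly consistent with $|C\triangle B|=2$ --- for instance $C=(B\setminus\{r_2\})\cup\{y\}$ with $y\in R\setminus\{r_1,r_2\}$, where the differing pair $\{r_2,y\}$ lies entirely in one side of the bipartition and hence is never an edge. The bipartite parity (Lemma~\ref{swaplemma}) only pins $|C\cap L|$ down to $\{k-4,k-2,k\}$; it does not exclude $|C\cap B|=k-1$. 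Indeed, the paper's own proof is forced to treat exactly such a configuration: after locating a common neighbour $D=r_1xl_3\cdots l_k$ of $A$, $B$ and $C$, one of the two possible shapes for $C$ is $r_1yl_3\cdots l_k$ with $y\in R\setminus\{r_2\}$, which has $|C\cap B|=k-1$, and ruling it out requires a separate argument (such a $C$ is adjacent to $l_1yl_3\cdots l_k$, which is not adjacent to $B$). Your conditions (a)--(c) and the entire ensuing case split presuppose $|C\setminus B|=|B\setminus C|=2$, so the $|C\triangle B|=2$ configurations are simply not covered by your proof.

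Apart from this, your framework is workable and close in spirit to the paper's. The paper instead observes that $N(C)\subseteq N(B)\subseteq N(A)$ makes $C$ isolated in $T_k(G)\setminus N[A]$ as well, uses Lemma~\ref{typek0notisolated} to force $|C\cap L|=k-2$, and then reads off the possible shapes of $C$ from a common neighbour of $A$, $B$, $C$ rather than from $C\cap B$ directly --- which is precisely how it avoids ever having to assert $|C\cap B|=k-2$. One smaller point: in your sub-case $T=\{r_i,l_j\}$, conditions (a) and (c) do not by themselves show $l_j\not\sim r_i$, since that pair lies inside $T$ and (c) says nothing about it; you need the extra observation that, by (a) and (b), every neighbour of $C$ arises from a swap between $U$ and $T$, and bipartiteness together with $N(l_j)\subseteq\{r_1,r_2\}$ and $U\cap B=\emptyset$ then leaves $C$ with no neighbours at all, contradicting the connectivity of $T_k(G)$. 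That hole is fillable with your stated tools; the $|C\triangle B|=2$ case is the one that genuinely needs a new argument.
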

\begin{proof}
Suppose 
$B$ is an isolated vertex in $T_k(G) \backslash N[A]$.
First, note that by Lemma~\ref{lem:nbrs}, we can assume $B = r_1r_2l_3\cdots l_k$
and $N(B)\subseteq N(A)$. 
Suppose that 
$C$ is an isolated vertex in $T_k(G)\backslash N[B]$,
{and so $N(C) \subseteq N(B)$}.

By Lemma~\ref{swaplemma}, $|C\cap L|\in\{k,k-2\}$. We claim that $|C \cap L| = k-2$.
Suppose that $|C \cap L| = k$.  We first note
that $C \neq A$.  Indeed, if $C =A$, then
$N(C) = N(B) = N(A)$.  If $k > 2$,
let $r$ be any vertex adjacent to $l_k$. Then, $A = l_1l_2\cdots l_k$ is adjacent to $W = l_1l_2\cdots l_{k-1}r$. Note that $W$ cannot be adjacent to $B$, since $l_1,l_2$ both appear in $W$, while neither appear in $B$. So $W$ and $B$ do not share an anchor, and thus
$N(B) \neq N(A)$.  If $k =2$,
then $A = l_1l_2$, and $B = r_1r_2$. By Lemma~\ref{lem:nbrs} we have $N(\{r_1,r_2\}) \subseteq \{l_1,l_2\}$. If $C = A$, then $N(C) \subseteq N(A)$ and consequently $N(\{l_1,l_2\}) \subseteq \{r_1,r_2\}$. But this means $\{l_1,l_2,r_1,r_2\}$ is a maximal connected component of $G$, contradicting the hypothesis that $G$ is connected on $\geq 5$ vertices.  Therefore $C \neq A$.

Since $N(C) \subseteq N(B) \subseteq N(A)$,  $C$ is  isolated in $T_k(G) \setminus N[A]$, contradicting Lemma~\ref{typek0notisolated}. Thus $|C \cap L| \neq k$,  and by Lemma~\ref{swaplemma}, $|C \cap L| = k-2$.

Since 
$C$ must is isolated in $T_k(G) \backslash N[A]$, 
 $C$ and $B$ have a common neighbour $D$ which is also adjacent to $A$. Thus, given that
$A$ and $D$ have a common anchor, without loss of generality, 
$D=r_1xl_3\ldots l_k$ for some $x\in\{l_1,l_2\}$.

Suppose $x=l_1$. Then since $D$ and $C$ have a common anchor, without loss of
generality, either $C=r_1yl_3\cdots l_k$ for some $y\in R$ or 
$C=r_1l_1zl_4\cdots l_k$ for some $z\in R$. Note that if $k=2$, the latter
case does not occur. 
Suppose $C= r_1yl_3\cdots l_k$. 
Then $y\neq r_2$ since $B\neq C$ and further, $C$ is adjacent to
$E=l_1yl_3\ldots l_k$. However, $E$ is not adjacent to $B$ since $|E\cap B|=k-2$, 
violating the fact that $N(C)\subseteq N(B)$. Therefore.   
$C=r_1l_1zl_4\ldots l_k$. Then $z=r_2$ by Lemma~\ref{lem:nbrs}.\ref{NL}. 
 In this case, $C$ is adjacent to
$F=r_1l_2l_1l_4\cdots l_k$ and hence cannot be adjacent to $B$ since
$|F\cap B|=k-2$.
This again violates the fact that $N(C)\subseteq N(B)$. 
Therefore $x=l_2$.
However, with $x=l_2$, a similar argument also leads to a contradiction.
Therefore there is no vertex $C$ that is isolated in $T_k(G) \backslash N[B]$.
\end{proof}

We pause to give an example that will be used
to simplify our proof of Theorem~\ref{bipartitethem}.

\begin{example}\label{stargraph}
Let $a\geq 4$ and $G = K_{1,a}$ be the complete bipartite
graph (a \emph{star} graph) with bipartition
$V =\{x\}\cup \{y_1,\ldots,y_a\}$.  We show that
$T_k(G)$ is not well-covered for any
$1 \leq k \leq \lfloor \frac{a+1}{2} \rfloor$.
Since $T_1(G)=G$ is not well-covered, we first 
consider $2 \leq k < \lfloor \frac{a+1}{2} \rfloor$.
Note that 
the vertices of $V(T_k(G))$
come in two types: a $k$ subset of $\{y_1,\ldots,
y_a\}$, and
a $k$ subset of $V$ that contains $x$ and
a $k-1$ subset of $\{y_1,\ldots,y_a\}$.  In fact,
these two sets form the bipartition of $T_k(G)$.
There are $\binom{a}{k}$ vertices of the first type,
and $\binom{a}{k-1}$ vertices of the second type. 
When $k\neq \frac{a+1}{2}$,
the parts of the bipartition have different cardinalities,
and hence  $T_k(G)$ is not well-covered. 
So, suppose $k = \frac{a+1}{2}$, and hence
$\binom{a}{k} = \binom{a}{k-1}$.  
If we take the non-maximal independent set
$I = \{y_1y_2\cdots y_k\}$, then
the bipartite graph
$T_k(G) \backslash N[I]$ is not well-covered
since \[N[I] = \{y_1y_2\cdots y_k,
xy_2\cdots y_k, xy_1y_3\cdots y_k,
\ldots, xy_1y_2\cdots y_{k-1}\},\]
and so one part has $\binom{a}{k}-1$
elements, and the other has $\binom{a}{k-1}-k$.
By Theorem \ref{wellcovered}, $T_k(G)$ 
is not well-covered.
\end{example}

\begin{theorem}\label{bipartitethem}
Let $2\leq k \leq \lceil\frac{n}{2}\rceil$.
If $G$ is a connected bipartite graph 
with $|G| = n \geq 5$,
then $T_k(G)$ is not well-covered. 
\end{theorem}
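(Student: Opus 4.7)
The plan is to split on the size of the smaller part of the bipartition and combine Example~\ref{stargraph} with an application of Theorem~\ref{wellcovered} guided by the preceding lemmas.

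Without loss of generality, assume $|L|\geq|R|$ in the bipartition $V(G)=L\cup R$, so $|L|\geq\lceil n/2\rceil\geq k$. If $|R|=1$ then $G=K_{1,n-1}$ is a star on $n\geq 5$ vertices, and Example~\ref{stargraph} immediately gives the conclusion. So assume from now on that $|R|\geq 2$; this places us in exactly the regime where Lemmas~\ref{swaplemma}--\ref{deleteB} apply.

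Pick any $A=l_1l_2\cdots l_k\in V(T_k(G))$ with $|A\cap L|=k$; such $A$ exists as $|L|\geq k$. The singleton $\{A\}$ is a non-maximal independent set of $T_k(G)$, because since $|R|\geq 2$ there exist vertices $X$ with $|X\cap L|\leq k-2$, and Lemma~\ref{swaplemma} places any such $X$ outside $N[A]$. Hence by Theorem~\ref{wellcovered} it is enough to exhibit some $A$ for which $T_k(G)\setminus N[A]$ fails to be well-covered. A preliminary classification: by Lemma~\ref{swaplemma} combined with the bipartiteness of $T_k(G)$ (Theorem~\ref{dealbabipartition}), any isolated vertex of $T_k(G)\setminus N[A]$ must lie in $V_L^k\cup V_L^{k-2}$, and Lemma~\ref{typek0notisolated} rules out $V_L^k$; so any isolated vertex sits in $V_L^{k-2}$.

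I would then split into two subcases. In subcase (i), $T_k(G)\setminus N[A]$ has no isolated vertex, so both bipartition classes (inherited from $T_k(G)$) are maximal independent sets of $T_k(G)\setminus N[A]$, of cardinalities $|P|-1$ and $|Q|-\deg_{T_k(G)}(A)$ respectively, where $P,Q$ are the two bipartition classes of $T_k(G)$. I would argue that a judicious choice of $A\in V_L^k$ makes these sizes unequal, exploiting that $\deg(A)$ varies with $A$ and cannot equal the fixed quantity $|Q|-|P|+1$ uniformly. In subcase (ii), some $B\in V_L^{k-2}$ is isolated in $T_k(G)\setminus N[A]$; Lemma~\ref{deleteB} then gives that $T_k(G)\setminus N[B]$ has no isolated vertex, and $\{B\}$ is itself a non-maximal independent set of $T_k(G)$, so a second application of Theorem~\ref{wellcovered} reduces the problem to comparing the two bipartition classes of $T_k(G)\setminus N[B]$. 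Here the rigid structural description of $B=r_1r_2l_3\cdots l_k$ from Lemma~\ref{lem:nbrs}---namely $N(\{l_3,\ldots,l_k\})\subseteq\{r_1,r_2\}$ and $N(\{r_1,r_2\})\subseteq\{l_1,\ldots,l_k\}$---pins $\deg(B)$ to a small range (at most four), which should force the size inequality.

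The main obstacle in both subcases is this final counting step, i.e.~verifying that $\deg_{T_k(G)}(A)\neq|Q|-|P|+1$ (respectively the analogous inequality for $B$). The asymmetry $|L|\geq|R|\geq 2$ feeds into $|P|-|Q|$, while Lemma~\ref{lem:nbrs} feeds into $\deg(B)$; balancing these to produce a size discrepancy, possibly with small separate cases for small $k$, is the technical heart of the argument.
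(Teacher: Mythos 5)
Your skeleton matches the paper's: the star case via Example~\ref{stargraph}, then for $|R|\geq 2$ an application of Theorem~\ref{wellcovered} to the non-maximal independent set $\{A\}$ (or, when an isolated vertex $B$ with $|B\cap L|=k-2$ appears, a second application to $\{B\}$ using Lemmas~\ref{typek0notisolated}, \ref{lem:nbrs} and~\ref{deleteB}). But there is a genuine gap at precisely the step you flag as ``the technical heart,'' and the route you sketch for closing it is not the right one. The missing idea is a simple dichotomy. Since $G$ is connected, $T_k(G)$ is connected, hence a bipartite graph with no isolated vertices, so \emph{both} classes $P,Q$ of the Theorem~\ref{dealbabipartition} bipartition are already maximal independent sets of $T_k(G)$ itself. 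Thus if $|P|\neq|Q|$ you are done with no deletion at all, and if $|P|=|Q|$ then the quantity you must avoid, $|Q|-|P|+1$, equals $1$, so the required inequality $\deg(A)\neq|Q|-|P|+1$ is exactly $\deg(A)\geq 2$, which is Lemma~\ref{typek0degree}; similarly for $B$ it is exactly Lemma~\ref{lem:nbrs}.\ref{deg}. There is nothing left to balance.

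By contrast, what you propose --- choosing $A$ ``judiciously'' so that $\deg(A)\neq|Q|-|P|+1$, and in subcase (ii) bounding $\deg(B)$ \emph{above} by four and playing that off against $|P|-|Q|$ --- does not close the gap. You give no argument that a suitable $A$ exists when $|Q|-|P|+1\geq 2$ (it is not evident that $\deg$ is non-constant on the vertices with $|A\cap L|=k$, and in any case that situation only arises when $T_k(G)$ is already not well-covered, so the effort is misdirected), and an upper bound on $\deg(B)$ points the wrong way: what is needed is a lower bound separating $\deg(B)$ from $1$, which Lemma~\ref{lem:nbrs} already supplies. So the proposal as written is not a proof; inserting the observation that well-coveredness of the connected bipartite graph $T_k(G)$ forces $|P|=|Q|$, and then invoking the degree lower bounds you already have, repairs it and recovers the paper's argument.
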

\begin{proof}
Suppose $G$ is a connected bipartite graph on $n$ vertices with bipartition $V(G) = L \cup R$.
Without loss of generality, assume that $|L|\geq |R|$ and 
hence $|L|\geq k$, since $2\leq k \leq \lceil\frac{n}{2}\rceil$. 

Suppose $|R| = 1$. Then $G$ is the star graph which
is not well-covered by
Example~\ref{stargraph}.

 Suppose $|R| \geq 2$. 
 If $T_k(G)$ is well-covered, it is necessary that the bipartition
 as described in Theorem~\ref{dealbabipartition} is balanced.
    Let $A \in V(T_k(G))$ with $|A\cap L|=k$. Let $Q =T_k(G) \backslash N[A]$.
    Note that  $\rm{deg}(A) \geq 2$ by Lemma~\ref{typek0degree}.
    If $Q$ contains no isolated vertices, then $Q$ is a bipartite graph with nonempty bipartitions
    that is not balanced and hence $Q$ is not well-covered. Therefore, by
    Theorem~\ref{wellcovered}, $T_k(G)$ is not well-covered.
    Suppose $Q$ contains an isolated vertex $B$.
By Lemma~\ref{swaplemma},  $|B\cap L|=k-2$. Then, by Lemmas~\ref{typek0notisolated} and~\ref{deleteB}, 
it follows that $W=T_k(G)\backslash N[B]$ contains no isolated vertices, 
and  $\deg(B) \geq 2$. Thus $W$ is an unbalanced bipartite graph with nonempty bipartitions
 and no isolated vertices. Therefore, by
    Theorem~\ref{wellcovered}, $T_k(G)$ is not well-covered.
\end{proof}

The following corollary gives the desired 
characterization.

\begin{corollary}\label{cor:bpwc}
Let $G$ be a connected bipartite graph with $|G| = n$. Given $1\leq k\leq \lceil\frac{n}{2}\rceil$,
then $T_k(G)$ is well-covered if and only if 
$G$ is well-covered and $k=1$ $($in this case $T_k(G) \cong G)$.
\end{corollary}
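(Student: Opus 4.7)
The plan is to deduce Corollary~\ref{cor:bpwc} essentially from Theorem~\ref{bipartitethem} together with the trivial $k=1$ case. The backward implication is immediate: when $k=1$ we have $T_1(G)\cong G$, so if $G$ is well-covered then so is $T_1(G)$.

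For the forward implication, suppose $T_k(G)$ is well-covered. If $k=1$, then the isomorphism $T_1(G)\cong G$ forces $G$ itself to be well-covered, as required. Otherwise $k\geq 2$, and the goal is to show this case is impossible, ruling it out by contradiction.

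The main engine is Theorem~\ref{bipartitethem}, which states that for any connected bipartite $G$ with $|G|=n\geq 5$ and $2\leq k\leq \lceil n/2\rceil$, the graph $T_k(G)$ is not well-covered. Thus only cases with $n\leq 4$ remain; since $k\geq 2$ together with $k\leq \lceil n/2\rceil$ forces $n\geq 3$, the leftover cases are precisely $n\in\{3,4\}$ with $k=2$.

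The only real obstacle is therefore a small finite verification in this leftover range. Up to isomorphism, the connected bipartite graphs to consider are $P_3$ (for $n=3$) and $P_4$, $C_4$, $K_{1,3}$ (for $n=4$); each $T_2(G)$ has at most six vertices and can be drawn explicitly. For each one can exhibit two maximal independent sets of different sizes: for instance, in $T_2(P_3)$ the singleton $\{13\}$ is maximal while $\{12,23\}$ is also maximal of size two, and in $T_2(K_{1,3})$ the set $\{xy_1,xy_2,xy_3\}$ is a maximal independent set of size three while $\{xy_1,y_2y_3\}$ is maximal of size two. The cases of $T_2(P_4)$ and $T_2(C_4)$ are handled in exactly the same way. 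With these checks in place, the contradiction is complete and the corollary follows.
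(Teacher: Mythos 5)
Your proposal is correct and follows essentially the same route as the paper: the backward direction is immediate from $T_1(G)\cong G$, the forward direction for $n\geq 5$ is Theorem~\ref{bipartitethem}, and the remaining cases are a finite check on small bipartite graphs (the paper simply cites "a direct computation on all bipartite graphs on four or fewer vertices," while you carry it out explicitly). Your explicit witnesses for $T_2(P_3)$ and $T_2(K_{1,3})$ check out, and the identification of the leftover range as $n\in\{3,4\}$ with $k=2$ is accurate.
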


\begin{proof}
If $n \geq 5$, then the result follows from Theorem
\ref{bipartitethem}.  A direct computation on
all bipartite graphs on four or fewer vertices 
finishes the proof.
\end{proof}

%%%%%%%%%%%%%%%%%%%%%%%%%%%%%%%%%%%%%%%%%%%%

\section{Restrictions on graphs   with well-covered
2-token graphs.}\label{sec:girth}

In this section, we derive  some restrictions on $G$, with
regard to girth and independence number, when
$T_k(G)$ is well-covered. 

\begin{theorem}\label{thm:4-3cyc}
 Suppose $|G| \geq 3$, $G$ is connected and $T_2(G)$ is well-covered. If $P = \{x_1,x_2,x_3\}$ is an induced path 
 in $G$, then either $P$ is part of a four cycle 
 or at least one of the vertices of $P$ is part of a $3$-cycle in $G$.
\end{theorem}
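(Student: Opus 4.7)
The plan is to prove the contrapositive. Assume $P=x_1x_2x_3$ is an induced path in $G$ with no vertex of $P$ lying on a $3$-cycle and with $P$ not contained in any $4$-cycle, and exhibit two maximal independent sets of $T_2(G)$ of different cardinalities.

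\textbf{Step 1 (structural setup).} Let $A=N(x_1)\setminus\{x_2\}$, $B=N(x_3)\setminus\{x_2\}$ and $C=N(x_2)\setminus\{x_1,x_3\}$. Since no $x_i$ lies on a $3$-cycle, each $N(x_i)$ is independent in $G$; in particular $A$, $B$, $C$ are independent. Since $P$ lies on no $4$-cycle, $x_1$ and $x_3$ have $x_2$ as their unique common neighbour, so $A\cap B=\emptyset$. A vertex in $A\cap C$ (resp.\ $B\cap C$) would form a triangle with $x_1,x_2$ (resp.\ $x_2,x_3$), so $A\cap C=B\cap C=\emptyset$. All three sets are disjoint from $\{x_1,x_2,x_3\}$.

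\textbf{Step 2 (the neighbourhood of $v$ in $T_2(G)$).} Set $v=\{x_1,x_3\}$, $u_1=\{x_1,x_2\}$, $u_2=\{x_2,x_3\}$. A direct computation of symmetric differences gives
\[
N_{T_2(G)}(v)=S:=\{u_1,u_2\}\cup\{\{x_1,b\}:b\in B\}\cup\{\{x_3,a\}:a\in A\}.
\]
I would then check pairwise that $S$ is an independent set of $T_2(G)$: each possible shared coordinate between two elements of $S$ produces a symmetric difference of the form $\{x_2,x_i\}$, $\{x_1,x_3\}$, $\{a,a'\}$ with $a,a'\in A$, $\{b,b'\}$ with $b,b'\in B$, or $\{x_2,y\}$ with $y\in A\cup B$, and in each case Step 1 forbids this being an edge of $G$. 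Note $|S|\geq 2$.

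\textbf{Step 3 (two candidate maximal independent sets).} Let $M_0$ be a maximal independent set of $T_2(G)\setminus N[S]$. Because $S$ is independent and $M_0\cap N[S]=\emptyset$, the set $M_2:=S\cup M_0$ is a maximal independent set of $T_2(G)$ containing $\{u_1,u_2\}$ but not $v$, with $|M_2|=|S|+|M_0|$. Since $N_{T_2(G)}(v)=S$ and $M_0\cap S=\emptyset$, the set $M_0\cup\{v\}$ is independent; extend it to a maximal independent set $M_1=M_0\cup\{v\}\cup J$ of $T_2(G)$. Then $|M_1|=|M_0|+1+|J|$, and
\[
|M_2|-|M_1|=|S|-1-|J|.
\]

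\textbf{Step 4 (size comparison --- the main obstacle).} To contradict well-coveredness it suffices to show $|J|\neq|S|-1$. By construction every $w\in J$ lies in $N(S)\setminus S$, is non-adjacent to $M_0$, and is non-adjacent to the other elements of $J$. Using the explicit description of $S$ from Step 2, I would enumerate the possible $w$'s as neighbours of $u_1$, $u_2$, $\{x_1,b\}$ or $\{x_3,a\}$, and for each such $w$ exhibit a companion neighbour of $w$ in $V(T_2(G))\setminus N[S]$ (for example the vertex $\{x_2,c\}$ when $w=\{x_1,c\}$ with $c\in C$, or $\{b,a\}$ when $w$ shares one of $x_1,x_3$ with a vertex of $A\cup B$). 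The maximality of $M_0$ in $T_2(G)\setminus N[S]$ forces these companions to be dominated by $M_0$, and a careful accounting of the resulting incidences shows that at most $|S|-2$ of the candidate $w$'s can simultaneously avoid $M_0$ and be pairwise non-adjacent. Establishing this bound rigorously --- tracking how each element of $S$ contributes only a controlled share of potential $w$'s given the independence of $A$, $B$, $C$ and their pairwise disjointness --- is the technical heart of the proof, and is where I expect the most care to be needed.
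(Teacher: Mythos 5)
Your Steps 1--3 are correct: the computation of $N_{T_2(G)}(v)=S$, the verification that $S$ is independent, and the construction of the two sets $M_2=S\cup M_0$ and $M_1=M_0\cup\{v\}\cup J$ are all sound, and they reduce the theorem to showing $|J|\neq|S|-1$. But Step 4, which you yourself identify as ``the technical heart,'' is not a proof --- it is a statement of what would need to be proved. The asserted bound $|J|\leq|S|-2$ is never established, and the ``companion'' mechanism you sketch does not establish it: for $w=\{x_1,c\}$ with companion $\{x_2,c\}\in T_2(G)\setminus N[S]$, the maximality of $M_0$ only tells you that $\{x_2,c\}$ is in $M_0$ \emph{or adjacent to} $M_0$; in the first case $w\notin J$, but in the second case you learn nothing about $w$. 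Moreover $J$ can contain vertices with no obvious companion and with no injective assignment to $S$ --- e.g.\ several vertices $\{x_1,y\}$ with $y\in N(b)\setminus N(x_3)$ for a single $b\in B$, or vertices $\{a,b\}$ with $a\in A$, $b\in B$ --- so the count of candidates is not a priori controlled by $|S|$. As written, the argument could not be completed by ``routine care''; the inequality itself is the open point.

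For contrast, the paper avoids any counting by isolating the entire path $T_2(P)=\{x_1x_2,\,x_1x_3,\,x_2x_3\}$ (which induces a $P_3$, hence is not well-covered) as a connected component. It observes that $N(T_2(P))=\bigcup_{i\neq j}x_iI_j$ where $I_1,I_2,I_3$ are your $A,C,B$, and then chooses an independent set --- in the main case $x_1I_1\cup x_2I_2\cup x_3I_3\cup I_1I_3$ --- whose closed neighbourhood swallows all of $N(T_2(P))$ while missing $T_2(P)$ itself; Theorem~\ref{wellcovered} then finishes immediately. (Degenerate cases where $I_1$ or $I_3$ is empty need separate, but still counting-free, treatment.) If you want to salvage your approach, the most direct repair is to replace your $S=N(v)$ by an independent set that dominates \emph{all} neighbours of all three vertices of $T_2(P)$, which is exactly what the paper's choice accomplishes.
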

\begin{proof}
Suppose that no vertex of $P$ is part of a $3$-cycle in $G$ and that there is no induced four cycle 
in $G$ that includes the
vertices $x_1,x_2$, and $x_3$.

Let $I_1,I_2,I_3$ be the vertices of $H = G \backslash P$ that are adjacent to $x_1,x_2,x_3$ respectively. Since  $x_1,x_2,x_3$ are not part of a triangle in $G$, we know that $I_1,I_2,I_3$ are independent sets in $G$. Likewise,  $I_1 \cap I_2 = \emptyset = I_2 \cap I_3$ since  $x_1,x_2,x_3$ are not part of a triangle. Further,
$I_1 \cap I_3 = \emptyset$ since none
of $x_1,x_2$, and $x_3$ are part of a $4$-cycle.

Consider $T_2(P) = \{x_1x_2, x_1x_3, x_2x_3\}$. In $T_2(G)$,  $N(T_2(P))$  is precisely the set of vertices $x_iI_j = \{x_iy ~|~ y \in I_j\}$
for $i,j \in \{1,2,3\}$ with $i \neq j$. 

Suppose $I_1\neq \emptyset$ and $I_3\neq \emptyset$. 
Consider the independent set $A = x_1I_1 \cup x_2I_2 \cup x_3I_3 \cup I_1I_3$. We have $N(T_2(P)) \subseteq N[A]$, and $T_2(P) \cap N[A] = \emptyset$. Thus, $T_2(P)$ is a maximal connected component of $T_2(G) \backslash N[A]$. Since $T_2(P)$ is not well-covered,  it follows that $T_2(G)$ is not well-covered by Theorem~\ref{wellcovered}.
 
 Suppose $I_1= \emptyset$ and$I_3\neq \emptyset$. Consider the independent set $A = x_2I_2 \cup x_3I_3$. In $T_2(G) \backslash N[A]$, we have $N(T_2(P)) = x_1I_3$.  Let $B$ be a maximal independent set in $T_2(G) \backslash (N[A] \cup T_2(P) \cup x_1I_3)$. Then $A\cup B$ is an independent set and no vertex of $B$ is adjacent to $T_2(P)$. 
    Thus 
    $T_2(P) \cup x_1(I_3 \backslash N[B])$ as a maximal connected component
    in $T_2(G) \backslash N[A\cup B]$. Further, $T_2(P) \cup x_1(I_3 \backslash N[B])$ is not well-covered. To see this, consider that  $\{x_1x_3\}$ is a maximal independent set, 
    but $\{x_1x_2, x_2x_3\}$ is a larger independent set. Therefore, by Theorem~\ref{wellcovered}, $T_2(G)$ is not well-covered.
    
    Suppose
    $I_1\neq \emptyset$ and $I_3= \emptyset$. By symmetry, this case is similar the previous case.
    
    Suppose 
    $I_1=I_3=\emptyset$. In $T_2(G)$, we have $N(T_2(P)) = x_1I_2 \cup x_3I_2$. Consider the independent set $A = x_2I_2$. Note that $N(T_2(P)) \subseteq N[A]$ while $T_2(P) \cap N[A]=\emptyset$. Thus, $T_2(G) \backslash N[A]$ contains an isolated path $T_2(P)$ and hence is not well-covered.
    Therefore, $T_2(G)$ is not well-covered by Theorem~\ref{wellcovered}.
\end{proof}

\begin{corollary}\label{cor:girth}
If $|G| \geq 3$, $G$ is connected and $T_2(G)$ is well-covered, then ${\rm girth}(G) \leq 4$.
\end{corollary}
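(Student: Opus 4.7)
The plan is to argue by contradiction. Assume that $T_2(G)$ is well-covered but $\operatorname{girth}(G) \geq 5$; in particular $G$ contains neither a $3$-cycle nor a $4$-cycle. The strategy is to exhibit an induced $P_3$ in $G$ and then apply Theorem~\ref{thm:4-3cyc} to force a short cycle.

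First I would produce an induced path on three vertices. Since $G$ is connected with $|G| \geq 3$, it must contain a vertex $v$ of degree at least $2$ (every connected graph on three or more vertices has one, as otherwise $G$ would be a single edge or isolated vertices). Choose two distinct neighbours $x_1, x_3$ of $v$. Because $G$ has no triangle, $x_1$ and $x_3$ are non-adjacent, so $P := \{x_1, v, x_3\}$ is an induced $P_3$ in $G$; set $x_2 := v$.

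Next I would invoke Theorem~\ref{thm:4-3cyc} on the induced path $P$. The theorem yields two possibilities: either some vertex of $P$ lies on a $3$-cycle of $G$, or $P$ is part of a $4$-cycle of $G$. The first possibility is immediately excluded by $\operatorname{girth}(G) \geq 5$. So there is a fourth vertex $y$ with $y \sim x_1$ and $y \sim x_3$ forming the $4$-cycle $x_1 x_2 x_3 y$. Now I would distinguish two subcases based on whether $y \sim x_2$: if $y \sim x_2$, then $\{x_1, x_2, y\}$ is a $3$-cycle, contradicting $\operatorname{girth}(G) \geq 5$; if $y \not\sim x_2$, then (together with the fact that $x_1 \not\sim x_3$ since $P$ is induced) $\{x_1, x_2, x_3, y\}$ spans an induced $4$-cycle, again contradicting $\operatorname{girth}(G) \geq 5$.

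Either way we reach a contradiction, so $\operatorname{girth}(G) \leq 4$. The main obstacle in the argument has already been overcome in Theorem~\ref{thm:4-3cyc}; the corollary itself is essentially a short combinatorial cleanup, with the only minor point being the guarantee of an induced $P_3$ inside any triangle-free connected graph on at least three vertices.
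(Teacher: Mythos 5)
Your argument is correct and is exactly the intended deduction: the paper states the corollary with no separate proof, treating it as an immediate consequence of Theorem~\ref{thm:4-3cyc}. Your extra care in extracting an induced $P_3$ from a triangle-free connected graph and in handling a possibly non-induced $4$-cycle is sound and fills in the (routine) details the paper leaves implicit.
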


We finish this section with a bound on $\alpha(G)$ when $T_2(G)$ is well-covered.

\begin{lemma}\label{lemma:phi}
Let $G$ be a connected graph, and let $V_1,V_2,\ldots,V_r$ form a partition of $V(G)$ such that $V_i$ is a maximum independent set in $G \backslash \left(\cup_{j \leq i} V_j\right)$. If $T_2(G)$ is well-covered, then $|\phi(V_{2i-1},V_{2i})| \leq 1$
for $1\leq i\leq \left\lfloor \frac{r}{2}\right\rfloor.$
\end{lemma}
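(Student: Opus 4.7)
I argue the contrapositive: if $|\phi(V_{2i-1},V_{2i})|\geq 2$ for some index $i$, then I will exhibit two maximal independent sets of $T_2(G)$ of different cardinalities, contradicting well-coveredness. Write $A=V_{2i-1}$, $B=V_{2i}$, and $\phi=\phi(A,B)$. The overall strategy is to produce the two sets by two applications of Theorem~\ref{thm:VVVV} to two different partitions of $V(G)$ and then compare their cardinalities.

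First, because each $V_j$ is a maximum (hence maximal) independent set of $G\setminus(V_1\cup\cdots\cup V_{j-1})$, the condition $\phi(V_k,V_j)=\emptyset$ holds for all $k>j$. So Theorem~\ref{thm:VVVV} applies to the given partition and yields a maximal independent set $H$ of $T_2(G)$; reading off the formula, the $(2i-1,2i)$-block contributes $|A||B|+\binom{|\phi|}{2}\geq |A||B|+1$ to $|H|$.

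Second, I would modify the partition at positions $\geq 2i-1$: keep $W_j=V_j$ for $j\leq 2i-2$ and set
\[
W_{2i-1}=\phi\cup B, \qquad W_{2i}=A\setminus\phi.
\]
Both sets are independent ($W_{2i-1}$ because $\phi\subseteq A$ and $B$ are each independent and by definition of $\phi$ there are no edges between them; $W_{2i}$ as a subset of $A$), and $\phi(W_{2i},W_{2i-1})=\emptyset$ because every vertex of $A\setminus\phi$ has by definition a neighbour in $B\subseteq W_{2i-1}$. The conditions linking $W_{2i-1}$ and $W_{2i}$ to the earlier classes $V_1,\dots,V_{2i-2}$ carry over directly. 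To complete the partition, break $V_{2i+1}\cup\cdots\cup V_r$ into maximal independent sets $W_{2i+1},W_{2i+2},\dots$ in the usual greedy fashion, first absorbing into $W_{2i}$ any vertex $v$ (with $v\in V_j$ for some $j>2i$) all of whose $A$-neighbours lie inside $\phi$; such a $v$ is non-adjacent to $A\setminus\phi$ so $W_{2i}$ remains independent. A further round of greedy absorption ensures each $W_j$ with $j>2i$ satisfies $\phi(W_j,W_{2i})=\emptyset$. Applying Theorem~\ref{thm:VVVV} to the resulting partition gives a second maximal independent set $H'$ whose $(2i-1,2i)$-block contributes $(|\phi|+|B|)(|A|-|\phi|)+\binom{|\phi(W_{2i-1},W_{2i})|}{2}$ to $|H'|$.

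Comparing block-by-block, the contributions at positions $\leq 2i-2$ agree, and a direct computation of the two $(2i-1,2i)$-block contributions shows they differ whenever $|\phi|\geq 2$; the later-block contributions can be accounted for by tracking the absorption step. Thus $|H|\neq|H'|$, contradicting well-coveredness. The main obstacle is formalising the absorption step so that the modified partition provably satisfies the hypotheses of Theorem~\ref{thm:VVVV} throughout and the size comparison is sign-definite; the cleanest route is to first treat the case $r=2i$ (no later classes), where the computation is immediate, and then reduce the general case to this by induction on $r-2i$.
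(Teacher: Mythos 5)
Your overall strategy---contradicting well-coveredness by producing, via Theorem~\ref{thm:VVVV}, independent sets of different sizes---is in the same spirit as the paper's proof, but your execution has a fatal gap: the size comparison you rely on is not sign-definite. Write $a=|V_{2i-1}|$, $b=|V_{2i}|$, $p=|\phi|$ and $q=|\phi(W_{2i-1},W_{2i})|\geq p$. Your two $(2i-1,2i)$-block contributions are $ab+\binom{p}{2}$ and $(b+p)(a-p)+\binom{q}{2}$, whose difference is $p(a-b-p)+\binom{q}{2}-\binom{p}{2}$. The hypotheses give only $b+p\leq a$ (because $B\cup\phi$ is independent in the graph in which $A$ is a maximum independent set), so in the realizable case $a=b+p$ and $q=p$ the two blocks contribute \emph{equally}, and your ``direct computation'' does not show the sets differ. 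The paper avoids this by moving a \emph{single} vertex $x\in\phi$: with $W_1=V_{2i-1}\setminus\{x\}$ and $W_2=V_{2i}\cup\{x\}$ one gets $(a-1)(b+1)=ab+(a-b-1)\geq ab+1$, using precisely the inequality $b\leq a-2$ that follows from $p\geq2$; your write-up never isolates that inequality. Moreover, the paper needs only \emph{one} maximal independent set (the one from Theorem~\ref{thm:VVVV}) together with a strictly larger independent set that need not be maximal, which sidesteps your entire re-partitioning problem.

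The absorption step is a second genuine gap. If $v,v'$ are two vertices from later classes each of whose $A$-neighbours all lie in $\phi$, nothing prevents $v$ and $v'$ from being adjacent, so absorbing both into $W_{2i}=A\setminus\phi$ can destroy its independence. More importantly, any re-partitioning of $V_{2i+1}\cup\cdots\cup V_r$ changes the later blocks' contributions to $|H'|$ in ways you do not control, so the block-by-block comparison on which the argument rests is unavailable; and the proposed induction on $r-2i$ does not obviously reduce to the case $r=2i$, since well-coveredness of $T_2(G)$ does not pass to induced subgraphs without an isolating argument in the style of Theorem~\ref{wellcovered}. To repair the proof, follow the paper: keep the original partition, take the maximal set $H$ it produces, and exhibit the strictly larger (not necessarily maximal) independent set obtained by replacing $V_{2i-1}V_{2i}$ with $W_1W_2$ for the single-vertex swap above.
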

\begin{proof}
 Suppose that   
$|\phi(V_1,V_2)| \geq 2$. Since $V_1$ is a maximum independent set of $G$, 
and because $V_2 \cup \phi(V_1,V_2)$
is an independent set,
$|V_2 \cup \phi(V_1,V_2)| \leq |V_1|$. Thus $|V_2| \leq |V_1| - 2$.

Let $x \in \phi(V_1,V_2)$.  Consider the sets $W_1 = V_1 \backslash \{x\}$ and $W_2 = V_2 \cup \{x\}$. Note that $|W_1 W_2| = (|V_1|-1)(|V_2|+1) \geq |V_1V_2|$ since $V_2 \leq V_1 - 2$. Further $W_1 W_2 \cup \phi(V_1,V_2)\phi(V_1,V_2)$ is an independent set in $T_2(G)$. 
By Theorem~\ref{thm:VVVV}, $A = V_1V_2 \cup \phi(V_1,V_2)\phi(V_1,V_2) \cup V_2V_3\cdots$ is a maximal independent set of $T_2(G)$. But 
$\vert A\backslash V_1V_2 \cup W_1 W_2 \vert 
> \vert A\vert,$  and hence 
$T_2(G)$ is not well-covered. 
By similar argument,  
$|\phi(V_{2i-1},V_{2i})| \leq 1$ for $2 \leq i \leq \lfloor \frac{r}{2} \rfloor$. 
\end{proof}

\begin{theorem}\label{cor:parts}
Suppose $G$ is connected and $T_2(G)$ is well-covered. Let $V_1,V_2,...,V_r$ 
form a partition of $V(G)$ such that $V_i$ is a maximum independent set in $G \backslash \left(\cup_{j \leq i} V_j\right)$.
Then for $1\leq k\leq \left\lfloor\frac{r}{2}\right\rfloor,$
$$|V_{2k-1}| \leq \left\lfloor\frac{1+2|V_{2k}| + \sqrt{8|V_{2k}|+1}}{2} \right\rfloor\leq \left\lfloor1+|V_{2k}|+\sqrt{2|V_{2k}|}\right\rfloor.$$
\end{theorem}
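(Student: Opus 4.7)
The plan is to exploit well-coveredness of $T_2(G)$ by comparing the maximal independent set of $T_2(G)$ supplied by Theorem~\ref{thm:VVVV} with a modified independent set obtained by repacking one of its blocks. Set $a=|V_{2k-1}|$ and $b=|V_{2k}|$. First I would note that the greedy hypothesis on $V_1,\dots,V_r$ forces $\phi(V_j,V_i)=\emptyset$ whenever $j>i$: if $x\in V_j$ made $V_i\cup\{x\}$ independent, it would be a strictly larger independent set inside $G\setminus\bigcup_{\ell<i}V_\ell$ than $V_i$, contradicting maximality. Hence Theorem~\ref{thm:VVVV} applies and produces a maximal independent set $H$ of $T_2(G)$ whose contribution from the indices $2k-1, 2k$ is the block $V_{2k-1}V_{2k}$, of cardinality $ab$ (the $\phi(V_{2k-1},V_{2k})\phi(V_{2k-1},V_{2k})$ piece is empty by Lemma~\ref{lemma:phi}).

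Next, I would define a competing subset $H^{**}\subseteq V(T_2(G))$ by keeping all of $H$ except the block $V_{2k-1}V_{2k}$, which I replace by $\binom{V_{2k-1}}{2}\cup\binom{V_{2k}}{2}$ (all pairs inside $V_{2k-1}$ together with all pairs inside $V_{2k}$). To check that $H^{**}$ is still an independent set of $T_2(G)$: two vertices inside $\binom{V_{2k-1}}{2}$ differ by a pair of vertices of $V_{2k-1}$, which is independent in $G$, so they are non-adjacent in $T_2(G)$, and the same argument handles $\binom{V_{2k}}{2}$; moreover $\binom{V_{2k-1}}{2}$ and $\binom{V_{2k}}{2}$ use disjoint vertex sets of $G$, ruling out any shared anchor; and all other pairwise interactions are identical to those already verified for $H$. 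Extending $H^{**}$ to a maximal independent set $J$, well-coveredness forces $|J|=|H|$, so $|H^{**}|\leq |H|$, which translates precisely to
\[\binom{a}{2}+\binom{b}{2}\leq ab.\]

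Finally I would do the algebra: the displayed inequality rearranges to $(a-b)^2\leq a+b$, and writing $c=a-b$ this becomes $c(c-1)\leq 2b$, i.e.\ $c\leq\frac{1+\sqrt{1+8b}}{2}$. Therefore $a=b+c\leq\frac{1+2b+\sqrt{8b+1}}{2}$, which after taking $\lfloor\cdot\rfloor$ gives the first of the two displayed bounds. The second bound reduces, after squaring, to the identity $\sqrt{8b+1}\leq 1+2\sqrt{2b}\Longleftrightarrow 0\leq 4\sqrt{2b}$, which holds for all $b\geq 0$. The only creative step is locating the replacement block $H^{**}$; once found, independence reduces to bookkeeping with disjoint vertex sets, and everything else is routine algebra.
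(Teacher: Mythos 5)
Your proposal is correct and follows essentially the same route as the paper: both arguments use Theorem~\ref{thm:VVVV} (with Lemma~\ref{lemma:phi} killing the $\phi\phi$ piece) to produce a maximal independent set containing the block $V_{2k-1}V_{2k}$, compare it against the competing independent set $V_{2k-1}V_{2k-1}\cup V_{2k}V_{2k}$ to extract $\binom{a}{2}+\binom{b}{2}\leq ab$ from well-coveredness, and finish with the same algebra. The only cosmetic difference is that the paper packages the comparison through the induced bipartite subgraph $\mathcal{B}_k$ and $\alpha(T_2(\mathcal{B}_k))$, whereas you swap the block directly inside the maximal independent set of $T_2(G)$; the content is identical.
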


\begin{proof}
We begin by noting that Lemma \ref{lemma:phi}
implies that $\phi(V_{2i-1},V_{2i})\phi(V_{2i-1},V_{2i}) = \emptyset$ for $i=1,\ldots,\lfloor \frac{r}{2} 
\rfloor$.  Consequently, by Theorem 
\ref{thm:VVVV}, the set 
$V_1V_2 \cup V_3V_4 \cup \cdots \cup V_{r-1}V_r$
if $r$ even, or $V_1V_2 \cup \cdots V_{r-2}V_{r-1}
\cup V_rV_r$ if $r$ odd, is a  maximal independent 
set of $T_2(G)$.

Let $\mathcal{B}_k$
be the bipartite  subgraph of $G$ induced
by $V_{2k-1}\cup V_{2k}.$  For simplicity, consider the case $k=1$. We first claim
that $\alpha(T_2(\mathcal{B}_1))=\vert V_{1}V_{2} \vert.$
Suppose for contradiction that $A$ is a maximal independent set of $\mathcal{B}_1$ with $\vert A\vert >|V_{1}V_{2}|$. Then, $T_2(G)$ contains an independent set with
cardinality $|A \cup V_3V_4 \cup \cdots| > |V_1V_2 \cup V_3V_4 \cup \cdots|$, which contradicts the hypotheses that $T_2(G)$ is well-covered or that $V_1V_2 \cup V_3V_4 \cup \cdots $ is maximal. 
Therefore 
$\alpha(T_2(\mathcal{B}_1))=\vert V_{1}V_{2} \vert$.

Because $V_1V_1 \cup V_2V_2$ is an independent
set of $\mathcal{B}_1$, it follows that 
$\alpha(T_2(\mathcal{B}_1)) = 
\vert V_1V_2 \vert = |V_1||V_2| \geq \binom{|V_1|}{2} + \binom{|V_2|}{2}$. 
Solving this inequality for $|V_1|$ (and using the fact that $|V_1|$ must be an integer) gives $|V_1| \leq \left\lfloor\frac{1+2|V_2| + \sqrt{8|V_2|+1}}{2} \right\rfloor \leq \left\lfloor1+|V_2|+\sqrt{2|V_2|}\right\rfloor$.
The cases with $k\neq 1$ are similar.
\end{proof}

\begin{corollary}\label{cor:alphabound}
Suppose $G$ is connected and $|G| = n \geq 3$. If $T_2(G)$ is well-covered, then $\alpha(G) \leq \left.\lfloor \frac{n-1+\sqrt{n-1}}{2} \rfloor\right.$.
\end{corollary}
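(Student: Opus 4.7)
The plan is to apply Theorem~\ref{cor:parts} to the greedy partition $V_1, V_2, \ldots, V_r$ of $V(G)$ into independent sets, where $V_1$ is a maximum independent set of $G$ (so $\alpha(G) = |V_1|$) and each $V_i$ is a maximum independent set of $G \setminus (V_1 \cup \cdots \cup V_{i-1})$. By maximality of the $V_i$'s, this greedy partition automatically satisfies the hypothesis $\phi(V_j, V_i) = \emptyset$ for $j > i$ that Theorems~\ref{thm:VVVV} and~\ref{cor:parts} require.

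First I would argue that $r \geq 3$. Since $G$ is connected with $n \geq 3$, $G$ has at least one edge, so $V_1 \subsetneq V(G)$ and hence $r \geq 2$. If $r = 2$, then $V(G) = V_1 \cup V_2$ expresses $G$ as a union of two independent sets, making $G$ bipartite; but then Corollary~\ref{cor:bpwc} (which applies since $2 \leq \lceil n/2 \rceil$ for $n \geq 3$) says $T_2(G)$ is not well-covered, contradicting the hypothesis. Thus $r \geq 3$, and since $V_3$ is nonempty, $|V_1| + |V_2| \leq n - 1$.

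Next I would extract the quadratic inequality that drives Theorem~\ref{cor:parts}: well-coveredness of $T_2(G)$ together with the maximality of the independent set $V_1V_2 \cup V_3V_4 \cup \cdots$ (provided by Theorem~\ref{thm:VVVV}) forces
\[
|V_1|\,|V_2| \;\geq\; \binom{|V_1|}{2} + \binom{|V_2|}{2},
\]
which rearranges to $(|V_1| - |V_2|)^2 \leq |V_1| + |V_2| \leq n-1$. Therefore $|V_1| - |V_2| \leq \sqrt{n-1}$, and combining this with $|V_1| + |V_2| \leq n-1$ yields $2|V_1| \leq (n-1) + \sqrt{n-1}$. Dividing by $2$ and using integrality of $|V_1|$ delivers the desired bound $\alpha(G) = |V_1| \leq \lfloor (n-1+\sqrt{n-1})/2 \rfloor$.

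The step I expect to be the main obstacle is showing $r \geq 3$: a naive argument only gives $|V_1| + |V_2| \leq n$, yielding the weaker bound $\lfloor (n + \sqrt{n})/2 \rfloor$. The improvement by one in both the $n$ and the radicand is entirely due to being able to invoke Corollary~\ref{cor:bpwc} to exclude the bipartite case $r = 2$; once that is secured the remaining algebra is routine optimization of $|V_1|$ against the two linear and quadratic constraints on $|V_1|, |V_2|$.
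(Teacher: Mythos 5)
Your proposal is correct and follows essentially the same route as the paper: both take the greedy partition with $V_1$ a maximum independent set, invoke Corollary~\ref{cor:bpwc} to rule out the bipartite case and thereby obtain $|V_1|+|V_2|\leq n-1$, and then feed this into the quadratic inequality $|V_1||V_2|\geq \binom{|V_1|}{2}+\binom{|V_2|}{2}$ underlying Theorem~\ref{cor:parts}. Your rearrangement to $(|V_1|-|V_2|)^2\leq |V_1|+|V_2|$ is only a cosmetic (and arguably cleaner) repackaging of the same algebra.
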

\begin{proof}
Partition the vertices of $G$ into sets $V_1, V_2, \ldots, V_r$ such that $|V_1| = \alpha(G)$ and $|V_2| = \alpha(G \setminus V_1)$. Since $T_2(G)$ is well-covered and $G$ is connected, Corollary~\ref{cor:bpwc} implies that $G$ is not bipartite. Thus, there is at least one vertex which is not in $V_1 \cup V_2$ and so $|V_2| \leq n-\alpha(G)-1$. Applying $|V_1| = \alpha(G)$ and $|V_2| = n-\alpha(G)-1$ to the  inequality from the Theorem ~\ref{cor:parts} yields the required result. 
\end{proof}

\section{Constructions of well-covered token graphs}\label{sec:wcgraphs}

In this section we describe some graphs $G$ for which $T_2(G)$ is well-covered. Many of the graphs fit within a certain family of graphs that we describe in Definition~\ref{rem:genbb}. 

We first note that there is no direct inheritance with respect to being well-covered for token graphs. 
If $G$ is well-covered, then
there is no guarantee that $T_k(G)$ is well-covered.
For example, the cycle $C_4$ is well-covered but   $T_2(C_4)$
is isomorphic to the complete bipartite graph $K_{2,4}$ which
is not well-covered. 
There are also graphs for which $G$ is not well-covered but $T_{2}(G)$ is well-covered, as observed in Figure~\ref{fig:T2} (and, for example, Theorem~\ref{thm:bba} with $s=m$ and $t=0$).

\begin{theorem}\label{thm:complete} 
For $n\geq 2$, $T_{2}(K_{n})$ is well-covered.
\end{theorem}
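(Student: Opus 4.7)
The plan is to reduce the statement to a well-known fact about matchings in the complete graph. Independent sets in $T_2(K_n)$ are easy to characterize: since every $2$-subset of $V(K_n)$ is an edge of $K_n$, two vertices $\{i,j\}$ and $\{k,l\}$ of $T_2(K_n)$ are adjacent precisely when $|\{i,j\}\cap\{k,l\}|=1$. Hence a set $M\subseteq V(T_2(K_n))$ is an independent set if and only if the corresponding collection of $2$-subsets of $[n]$ is pairwise disjoint, i.e.\ forms a matching in $K_n$. (This is the familiar fact that $T_2(K_n)$ is the line graph of $K_n$.)

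Next I would translate maximality through this correspondence. A matching $M$ of $K_n$ covers $2|M|$ vertices and leaves $n-2|M|$ vertices uncovered. Since every pair of vertices in $K_n$ is an edge, if there were two uncovered vertices $u,v$, then $\{u,v\}$ could be added to $M$ to produce a larger matching; in $T_2(K_n)$ this says $M\cup\{\{u,v\}\}$ is still independent, so $M$ is not a maximal independent set. Conversely, if $n-2|M|\le 1$ then $M$ is already maximal.

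Combining these, a maximal independent set of $T_2(K_n)$ corresponds to a matching that leaves at most one vertex uncovered, and hence has cardinality exactly $\left\lfloor \tfrac{n}{2}\right\rfloor$. Therefore every maximal independent set of $T_2(K_n)$ has the same size $\left\lfloor \tfrac{n}{2}\right\rfloor$, and $T_2(K_n)$ is well-covered. There is no real obstacle here; the content is just the observation that in $K_n$, unlike in a general graph, a maximal matching is automatically a maximum matching, because any two unmatched vertices are joined by an edge.
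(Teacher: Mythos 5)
Your proposal is correct and follows essentially the same route as the paper's proof: both observe that an independent set of $T_2(K_n)$ is exactly a matching of $K_n$, and that such a set is maximal only if at most one vertex of $K_n$ is left uncovered, forcing cardinality $\left\lfloor \frac{n}{2}\right\rfloor$. No issues.
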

\begin{proof}
Let $A$ be an independent set of vertices in $T_2(K_n)$. No vertex of $K_n$ appears in
more than one pair in $A$. 
 If there exists $i,j\in V(K_{n})$, with neither $i$ nor $j$ appearing in any pair in $A$, then $A\cup \{ij\}$ is also an independent set. It follows that if $A$ is a maximal independent set, 
 then $|A|= \left\lfloor\frac{n}{2}\right\rfloor$. Thus $T_{2}(K_{n})$ is well-covered.
\end{proof}

While $T_2(K_n)$ is well-covered, we expect that in general $T_k(K_n)$ is not
well-covered for $k>2$. For example, it is known that 
for each $n\geq 9$, there exists a partial Steiner triple system of order $n$  that does not
have an embedding of order $v$ for any $v< 2n + 1$, demonstrating the existence of a maximal
independent set in $T_3(K_n)$ that is not maximum when $n\equiv 1, 3 \mod 6$ (see \cite{H} 
and \cite{CR}).
 For example,  the maximal independent set $\{123,367,345,147,256\}$ in $T_3(K_7)$ cannot be completed to become a Fano plane.

In this section, we use the fact that if $H$ is a subgraph of $G$, then $T_2(H)$ is a subgraph
of $T_2(G)$.  Taking a maximal independent set of a graph $G$ and considering its restriction
to a subset of vertices $A$ of $G$ will give an independent set in $G[A]$.  
\begin{remark}\label{rem:parts}
If $V(G)=V_1\cup V_2$ with $V_1\cap V_2 = \emptyset$ then $\alpha(G)\leq \alpha(G[V_1])+\alpha(G[V_2])$ 
\end{remark}

Using a computer search, we found that there were few graphs $G$ for which $T_2(G)$ is well-covered for 
small $n$ (see the Appendix). Besides the complete graphs, many of the graphs $G$ we found for which $T_2(G)$ is well-covered were in a class $\G$ described below.

\begin{definition}\label{rem:genbb}
Define $\G$ to be the set of graphs obtained by taking the disjoint union of $K_m$ and $K_n$, $n \geq m$, and inserting some edges.
An example of a graph in $\G$ is given in Figure~\ref{fig:zig}. Let $X=V(K_{m})=\{x_1,x_2,...,x_m\}$ and $Y=V(K_{n})=\{y_1,y_2,...,y_n\}$. Let $G\in\G$ and $H=T_2(G)$. Then the vertices of $H$
can be partitioned as $V(H)=XX\cup XY\cup YY$ with
$H[XX]=T_2(K_m)$,
$H[YY]=T_2(K_n)$, and
$H[XY]=K_m\square K_n$ (the
Cartesian product of $K_m$ and
$K_n$).  
Further, if $x_i$ is adjacent to $y_k$ in $G$, then $H$ contains the edges $\{\{x_j x_i,x_j y_k\}| 1\leq j \leq m, j\neq i\}$ 
and $\{\{x_i y_\ell,y_k y_\ell\}| 1\leq \ell \leq n, \ell\neq k\}$.
\end{definition}

In next
set of theorems we determine classes of token graphs $T_2(G)$
with $G \in \G$ that are well-covered and classes that are not well-covered. 
We start by considering the
independence number for some of the graphs in $\G$.

\begin{lemma}\label{lem:bbalpha}
Let $G\in\G$ with 
at most $n-m$ vertices of $Y$ having a neighbour in $X$. Then $\alpha(T_2(G))=\alpha(T_2(K_m))+\alpha(T_2(K_n))+m$.
\end{lemma}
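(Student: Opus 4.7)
My plan is to establish matching upper and lower bounds on $\alpha(T_2(G))$ using the vertex-type partition described in Definition~\ref{rem:genbb}.

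\emph{Upper bound.} Since $V(T_2(G)) = XX \cup XY \cup YY$, any independent set $S$ restricts to an independent set in each induced subgraph, namely $H[XX] \cong T_2(K_m)$, $H[XY] \cong K_m \square K_n$, and $H[YY] \cong T_2(K_n)$. Summing the piecewise bounds yields
\[|S| \leq \alpha(T_2(K_m)) + \alpha(K_m \square K_n) + \alpha(T_2(K_n)).\]
The Cartesian product $K_m \square K_n$ is the $m \times n$ rook's graph, whose independence number equals $\min(m,n) = m$ (a maximum independent set is a partial transversal of the grid), giving the desired upper bound.

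\emph{Lower bound, framework.} I will build an explicit independent set $S$ of matching size. Let $Y_0 \subseteq Y$ denote the vertices with no neighbour in $X$, so $|Y_0| \geq m$ by hypothesis. Choose distinct $z_1,\dots,z_m \in Y_0$ and set $S_{XY} = \{x_i z_i \mid 1 \leq i \leq m\}$, a transversal of size $m$ in $K_m \square K_n$. Take $S_{XX}$ to be a maximum matching of $K_m$ on $X$ and $S_{YY}$ a collection of $\lfloor n/2 \rfloor$ disjoint pairs in $Y$, to be chosen below. Since every $z_i$ lies in $Y_0$, no inserted edge of $G$ touches any $z_i$; consequently all $S_{XX}$--$S_{XY}$ cross-adjacencies vanish automatically, and an $S_{XY}$--$S_{YY}$ adjacency arises only when an $S_{YY}$-pair $\{z_i, y_b\}$ shares some $z_i$, in which case safety demands $\{x_i, y_b\} \notin E(G)$.

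\emph{Construction of $S_{YY}$.} I pair the $z_i$'s with each other, taking $\{z_{2j-1}, z_{2j}\} \in S_{YY}$ for $j=1,\dots,\lfloor m/2\rfloor$ (both endpoints in $Y_0$, so automatically safe), and then take an arbitrary maximum matching of $Y \setminus \{z_1,\dots,z_m\}$; these latter pairs touch no $z_i$, so their symmetric differences with every $S_{XY}$-vertex have size four. A quick parity count shows this yields exactly $\lfloor n/2 \rfloor$ pairs in every case except when $m$ is odd and $n$ is even, where the total is short by one. For that exceptional case I argue in two sub-cases: (i) if $|Y_0| > m$, add an extra $z_{m+1} \in Y_0$ and include the pair $\{z_m,z_{m+1}\}$; (ii) if $|Y_0| = m$ (so $|Y_1| = n-m \geq 1$ since $n > m$), pick any $y^* \in Y_1$, replace $x_m z_m$ by $x_m y^*$ in $S_{XY}$, leave $x_m$ unmatched in $S_{XX}$ (so $|S_{XX}| = (m-1)/2 = \lfloor m/2 \rfloor$), and add the pair $\{y^*, z_m\}$ to $S_{YY}$.

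\emph{Main obstacle.} Sub-case (ii) is the main bookkeeping subtlety. Leaving $x_m$ unmatched is essential, since otherwise a pair $\{x_j,x_m\} \in S_{XX}$ would have symmetric difference $\{x_j, y^*\}$ with $x_m y^*$, which can be an edge of $G$ because $y^* \in Y_1$. The new $S_{YY}$-pair $\{y^*, z_m\}$ is safe because $z_m \in Y_0$ guarantees $\{x_m, z_m\} \notin E(G)$. Verifying the remaining cross-adjacencies in each case is routine given the framework. The resulting $S = S_{XX} \cup S_{XY} \cup S_{YY}$ has size $\lfloor m/2 \rfloor + m + \lfloor n/2 \rfloor = \alpha(T_2(K_m)) + m + \alpha(T_2(K_n))$, matching the upper bound.
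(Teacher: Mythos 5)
Your proof is correct and follows essentially the same route as the paper: the upper bound comes from restricting an independent set to the three classes $XX$, $XY$, $YY$ (with $\alpha(K_m\square K_n)=m$), and the lower bound from an explicit independent set built from a near-perfect matching on $X$, one on $Y$, and $m$ disjoint $X$--$Y$ pairs anchored at vertices of $Y$ with no neighbour in $X$. The paper handles the parity issue slightly more uniformly (when $m$ is odd it simply uses $x_my_{m+1}$, whose only shared anchor with the $Y$-matching forces the non-edge $\{x_m,y_m\}$), but this is the same idea as your sub-case (ii).
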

\begin{proof}
By Remark~\ref{rem:parts},
$\alpha(T_2(G))\leq     
\alpha (T_2(K_m))+\alpha (K_m\square K_n)+\alpha (T_2(K_n))$. If $n=m$, then
$T_2(G)$ is just the disjoint union of $T_2(K_m)$, $K_m\square K_n$ and $T_2(K_n)$ and so equality holds in
the previous inequality. Suppose that $n>m$.  Note that $\alpha (K_m\square K_n)=m$.  
Without loss of generality, assume that none of the vertices $y_1,y_2,\ldots,y_m$ are adjacent
to any of the vertices of $K_m$. 
Let $A=\{ x_1x_2,x_3x_4,\ldots\}$ be a maximal independent set of $T_2(K_m)$,
$B=\{ y_1y_2,y_3y_4,\ldots \}$ be a maximal independent set of $T_2(K_n)$,
and $C=\{x_1y_1,x_2y_2,\ldots,x_{m-1}y_{m-1} \}$. Let $D=\{x_my_m\}$ if $m$ is even
and $D=\{x_my_{m+1}\}$ if $m$ is odd. Then $A\cup B\cup C \cup D$ is an independent
set of $T_2(G)$. 
Therefore $\alpha(T_2(G))=\alpha(T_2(K_m))+\alpha(T_2(K_n))+m$.
\end{proof}

\begin{remark}
The independent set constructed in the proof of Lemma~\ref{lem:bbalpha} can be constructed via the construction of 
Corollary~\ref{cor:VVV}.
In particular, taking $V_{2i-1}=\{x_{2i-1},y_{2i}\}$
and $V_{2i}=\{x_{2i},y_{2i-1}\}$ for $1\leq i \leq \left\lfloor\frac{m}{2}\right\rfloor$,
with $W=\{x_m,y_m\}$ if $m$ is odd and $W=\emptyset$ if $m$ is even,
and taking 
$U_{i}=\{y_i\}$ for $m+1\leq i\leq n$, then 
$A\cup B\cup C\cup D$ is
the same as 
$\{V_1V_2\cup V_3V_4\cup \cdots \}\cup \{U_{m+1}U_{m+2}
\cup U_{m+3}U_{m+4} \cup \cdots \} \cup WW$. While the tools of Section~\ref{sec:lower} are helpful for  constructing independent sets in token graphs, in this section, such as in the previous proof, we give more direct descriptions of some independent sets.
\end{remark}

For graphs $G\in \G$,  
the next three theorems provide forbidden configurations for $T_2(G)$  
 to be well-covered.
The restriction of at most $n-m$ vertices of $K_m$ having a neighbour in $K_n$ allows us to provide the exact value of the independence number in Lemma~\ref{lem:bbalpha}. As such, we focus on graphs in $\G$ having this  
restriction as we develop the next results.  The next theorem provides a parity restriction for such graphs in $\G$ that 
are well-covered.

\begin{theorem}\label{thm:evenleaf}
Let $G\in\G$ be such that at most
$n-m$ vertices of $Y$ have a neighbour in $X$. If $G$ is connected and either $n$ or $m$ is even, then $T_2(G)$ is not well-covered.
\end{theorem}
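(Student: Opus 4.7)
My plan is to prove $T_2(G)$ is not well-covered by producing, alongside the maximum independent set of size $\alpha(T_2(G)) = \lfloor m/2\rfloor + \lfloor n/2\rfloor + m$ supplied by Lemma~\ref{lem:bbalpha}, a \emph{maximal} independent set of $T_2(G)$ of cardinality $\alpha(T_2(G)) - 1$. Two maximal independent sets of differing sizes directly witness the failure of well-coveredness. Because $G$ is connected there is a cross-edge $\{x_a, y_b\}$, and after the relabeling that places the free vertices of $Y$ first (as in the proof of Lemma~\ref{lem:bbalpha}), I may assume $y_1, \ldots, y_m$ are free and $a = 1$.

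When $m$ is even I will take
\[
S = \{x_3 x_4, x_5 x_6, \ldots, x_{m-1} x_m\}\, \cup\, M_Y\, \cup\, \{x_1 y_1, x_2 y_b, x_3 y_3, \ldots, x_m y_m\},
\]
where $M_Y$ is a maximum matching of $K_n$ chosen so that each of its pairs consists of two free $y$'s whenever possible. The $K_m$-matching intentionally omits $\{x_1, x_2\}$; this omission is patched by the bridge vertex $\{x_2, y_b\}$, which is $T_2$-adjacent to $\{x_1, x_2\}$ because their symmetric difference $\{x_1, y_b\}$ is the cross-edge. A routine case analysis over vertex type (XX, YY, XY) then shows every vertex of $T_2(G)$ not in $S$ is adjacent to some element of $S$, giving $|S| = (m/2 - 1) + \lfloor n/2\rfloor + m = \alpha(T_2(G)) - 1$. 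When $m$ is odd (so by hypothesis $n$ is even), I will instead take
\[
S = \{x_1 x_2, x_3 x_4, \ldots, x_{m-2} x_{m-1}\}\, \cup\, M_Y'\, \cup\, \{x_1 y_1, x_2 y_2, \ldots, x_{m-1} y_{m-1}, x_m y_b\},
\]
with $M_Y'$ a perfect matching of $Y \setminus \{y_1, y_b\}$ again pairing free $y$'s together wherever possible. The critical unincluded YY-vertex $\{y_1, y_b\}$ is then blocked by the bridge $\{x_1, y_1\}$, and the critical unincluded XX-vertex $\{x_1, x_m\}$ (critical because $x_m$ is unmatched in the $K_m$-matching) is blocked by the bridge $\{x_m, y_b\}$, each via the cross-edge sitting inside a symmetric difference; the size is again $\alpha(T_2(G)) - 1$.

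I expect the main technical obstacle to be preserving independence of $S$ with $M_Y$ when more than one cross-edge is present: the bridge $\{x_2, y_b\}$ becomes $T_2$-adjacent to an $M_Y$-edge $\{y_b, y_{b'}\}$ precisely when $\{x_2, y_{b'}\}$ is itself a cross-edge, and an analogous issue appears in the odd case with $x_m$. The hypothesis that at most $n-m$ vertices of $Y$ have a neighbour in $X$ guarantees at least $m$ free $y$'s; together with the freedom to re-choose the cross-edge $\{x_a, y_b\}$, the partner of $x_1$ inside the $K_m$-matching, and the matching $M_Y$, one should always be able to avoid such collisions by permuting labels inside $\{1, \ldots, m\}$ and re-selecting an appropriate cross-edge, although formalizing this will demand a careful sub-case analysis based on the bipartite graph of cross-edges.
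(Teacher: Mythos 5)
Your overall strategy --- exhibiting a maximal independent set of size $\alpha(T_2(G))-1$ alongside the maximum one from Lemma~\ref{lem:bbalpha} --- is sound in principle, but the construction as written has a genuine gap, located exactly where you flag ``the main technical obstacle.'' Take $m=2$, $n=4$, with $y_3,y_4$ the two non-free vertices of $Y$, each adjacent to both $x_1$ and $x_2$ (this satisfies all hypotheses of the theorem). With cross-edge $\{x_1,y_3\}$ your even-$m$ set is $S=\{x_1y_1,x_2y_3\}\cup M_Y$ with $M_Y=\{y_1y_2,y_3y_4\}$ (free paired with free), and this is not independent: $\{x_2,y_3\}\triangle\{y_3,y_4\}=\{x_2,y_4\}\in E(G)$. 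Worse, the repair moves you list do not suffice here: any admissible $M_Y$ must match $y_3$ to one of $y_1,y_2$, hence $y_4$ to the other, and then $x_1y_1$ (or $x_1y_2$) collides with that $M_Y$-edge because $x_1\sim y_4$; permuting labels and re-selecting the cross-edge changes nothing by symmetry. The only fix is to reassign $x_1$'s partner in the $XY$-system to the \emph{non-free} vertex $y_4$, giving $S=\{x_2y_3,x_1y_4,y_2y_3,y_1y_4\}$ --- a move not among the knobs you propose to turn, and one that runs opposite to your ``pair free with free'' heuristic. So the deferred ``careful sub-case analysis'' is not a routine verification; it is the actual content of the proof, and it is missing.

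The paper sidesteps all of this by never constructing the deficient maximal independent set explicitly. It seeds a maximal independent set $I$ with one or two vertices chosen only to cripple the $XX$-part: if some $y\in Y$ is adjacent to all of $X$, put $x_1y$ into $I$, so no pair $x_1x$ can lie in $I$; otherwise choose $y\sim x_1$ with $y\not\sim x_m$ and put $yx_m$ together with the matching $x_2x_3,\dots,x_{m-2}x_{m-1}$ into $I$, so $x_m$ cannot be matched within $I\cap XX$. Either way $|I\cap XX|<\alpha(T_2(K_m))=m/2$ (this is where evenness of $m$ enters), and then the decomposition bound of Remark~\ref{rem:parts} gives $|I|\le |I\cap XX|+|I\cap XY|+|I\cap YY|<\alpha(T_2(G))$ for \emph{any} maximal extension, with no need to control $M_Y$ or the $XY$-system at all. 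I recommend reworking your argument along those lines rather than attempting to complete the collision analysis.
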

\begin{proof} Since $G$ is connected, we
know that $n>m.$ 
By Lemma~\ref{lem:bbalpha},
$\alpha(T_2(G))= \alpha(T_2(K_m))
+\alpha(T_2(K_n))+m.$
Suppose $m$ is even.  
Suppose there is a vertex, say $y_n$, which is
adjacent to every vertex in $K_m$. Let $I$
be any maximal independent set of $T_2(G)$ with
$x_1y_n\in I.$ Then $I$ can contain no
edge $x_1x\in XX$. Thus $|I\cap XX|<\alpha(T_2(K_m))$ and hence
$G$ is not well-covered.

Suppose there is a vertex in $Y$  
adjacent to some vertex in $X$ but not adjacent to every vertex in $X$. Assume that $y_n$ is adjacent to $x_1$ but not $x_m$. Let $I$ be a maximal independent set
with $\{y_nx_m\}\cup \{x_2x_3,\ldots,x_{m-2}x_{m-1}\}\subseteq I.$
Note that $x_1x_m\not\in I$ 
since $y_nx_m\in I$ and $x_1$ is adjacent to
$y_n$ in $G$. Thus $|I\cap XX|<\alpha(T_2(K_m))$.
Therefore $T_2(G)$ is not well-covered.
The case with $n$ even is similar to the previous case.
\end{proof}

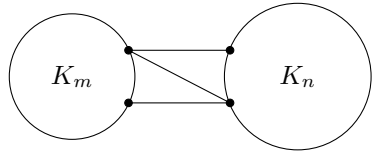
\begin{figure}[ht]
	\begin{center}
	\begin{small}	
		\begin{tikzpicture}
		\vertexdef
		\node (circle) [draw, circle, minimum size = 16.7mm] at (0, 0) {};
		\node at (0,0) {$K_m$};        
		\node (circle) [draw, circle, minimum size = 19.5mm] at (3, 0) {};
         \node at (3,0) {$K_n$};       
         \node[place] (1) at (0.75,0.35){};        
         \node[place] (3) at (0.75,-0.35){};        
         \node[place] (2) at (2.1,0.35){};
         \node[place] (4) at (2.1,-0.35){};
        \draw (1) -- (2);
		\draw (1) -- (4);
	    \draw (3) -- (4);
	    \end{tikzpicture}  
      \end{small}  
   \end{center}
   \caption{A graph $G\in\G$ with $T_2(G)$ not well-covered (Theorem~\ref{thm:zigzag}).}\label{fig:zig}
	\end{figure}

\begin{theorem}\label{thm:zigzag}
Let $G\in \G$ 
with %$m,n$ both odd, 
$n\geq m+2$,
with $\{x_1, y_1\}$, $\{x_1,y_2\}$, 
$\{x_2, y_2\} \in E(G)$ and at most $n-m$ vertices of $Y$ have a neighbour in $X$.
Then $T_2(G)$ is not well-covered.  
\end{theorem}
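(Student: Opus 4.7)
My plan is to exhibit a maximal independent set of $T_2(G)$ of size strictly less than $\alpha(T_2(G))$, which by Lemma~\ref{lem:bbalpha} equals $\lfloor m/2\rfloor+\lfloor n/2\rfloor+m$. The loss will be forced in the $XX$-part of the standard decomposition of $T_2(G)$ (cf.\ Definition~\ref{rem:genbb}), by using the zigzag cross-edges to block every $XX$-edge meeting $\{x_1,x_2\}$ (and, when $m$ is odd, $\{x_m\}$) without actually putting any such $XX$-vertex into the independent set.

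Since the hypothesis that at most $n-m$ vertices of $Y$ have a neighbour in $X$ implies that at least $m$ vertices of $Y$ are ``clean'' (have no neighbour in $X$), I may relabel so that $y_3,\ldots,y_{m+2}$ are all clean, while $y_1,y_2$ are the two dirty ones from the zigzag hypothesis. I then take
\[
M_X=\begin{cases}
\{\{x_3,x_4\},\{x_5,x_6\},\ldots,\{x_{m-1},x_m\}\} & m\text{ even},\\
\{\{x_3,x_4\},\{x_5,x_6\},\ldots,\{x_{m-2},x_{m-1}\}\} & m\text{ odd},
\end{cases}
\]
a partial matching in $K_m$ of size $\lfloor m/2\rfloor-1$, together with
\[
C=\begin{cases}
\{\{x_1,y_2\}\} & m\text{ even},\\
\{\{x_2,y_1\},\{x_m,y_2\}\} & m\text{ odd}.
\end{cases}
\]
Using $N_X(y_1)\supseteq\{x_1\}$ and $N_X(y_2)\supseteq\{x_1,x_2\}$ (from the zigzag edges), I verify that every $XX$-vertex outside $M_X$ is adjacent in $T_2(G)$ to some element of $M_X\cup C$: all edges involving a vertex of the $M_X$-support are absorbed by $M_X$, and $\{x_1,x_2\}$, $\{x_1,x_m\}$, $\{x_2,x_m\}$ are blocked by the elements of $C$ through the zigzag.

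To complete $M_X\cup C$ to the desired independent set $I$, I extend it with: an $XY$-matching pairing the remaining $x$-vertices with distinct clean $y$'s from $\{y_3,\ldots,y_{m+2}\}$ (adding $m-|C|$ elements, so that $|I\cap XY|=m$), and a maximum matching of $K_n$ for the $YY$-part, chosen to avoid $\{y_1,y_2\}$ together with, when $m$ is odd, the two $YY$-edges $\{y_1,y_3\}$ and $\{y_2,y_3\}$ (which would conflict with the element $\{x_1,y_3\}$ of the $XY$-extension). Since $n\geq m+2\geq 5$, a $YY$-matching of size $\lfloor n/2\rfloor$ avoiding those few forbidden edges exists, and a short calculation gives $|I|=(\lfloor m/2\rfloor-1)+m+\lfloor n/2\rfloor=\alpha(T_2(G))-1$.

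The main obstacle is the careful verification that $I$ is independent and maximal. Independence reduces to a case analysis: each pair of chosen vertices has symmetric difference of size at least four, or a size-two difference that is not an edge of $G$, with the cleanness of the chosen $y$'s in the $XY$-extension ensuring this for cross-block pairs. For maximality I need every vertex of $T_2(G)\setminus I$ to be adjacent to some element of $I$: $XX$-vertices are handled by the blocking argument above; an $XY$-vertex $\{x_a,y_b\}\notin I$ is blocked either by whichever element of the $XY$-matching shares its row or column (via $K_m$- or $K_n$-edges) or, in the few remaining cases, by a neighbouring $YY$- or $XX$-element through a zigzag cross-edge; and each $YY$-vertex $\{y_a,y_b\}\notin I$ is adjacent to a $YY$-matching element sharing an endpoint. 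Once maximality is established, $|I|<\alpha(T_2(G))$ gives two maximal independent sets of different sizes, so $T_2(G)$ is not well-covered.
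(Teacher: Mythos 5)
Your core idea---use the zigzag cross-edges to dominate every $XX$-vertex meeting $\{x_1,x_2\}$ (and $x_m$ when $m$ is odd), so that any maximal independent set containing your blocking set has $|I\cap XX|\le\lfloor m/2\rfloor-1$---is exactly the mechanism of the paper's proof, and your sets $M_X$ and $C$ do that job correctly: $M_X\cup C$ is independent regardless of what other cross-edges $G$ has, and every $XX$-vertex outside $M_X$ is adjacent to it. The gap is in the second half, where you insist on writing down the entire maximal independent set and certifying its size exactly. Your stated constraints on the $YY$-matching (avoid $\{y_1,y_2\}$, plus $\{y_1,y_3\}$ and $\{y_2,y_3\}$ when $m$ is odd) are not sufficient for independence. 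Concretely, for $m$ even, if the $XY$-matching pairs $x_2$ with $y_3$, then $\{x_2,y_3\}$ and the (permitted) $YY$-edge $\{y_2,y_3\}$ have symmetric difference $\{x_2,y_2\}$, which is an edge of $G$ by hypothesis, so these two vertices of $T_2(G)$ are adjacent. More generally the hypothesis allows up to $n-m\ge 2$ vertices of $Y$ to have arbitrary neighbourhoods in $X$, and each such dirty $y_l$ with $x_jy_l\in E(G)$ forbids the $YY$-edge $\{y_k,y_l\}$ whenever $y_k$ is the clean vertex you paired with $x_j$ (and likewise forbids $\{y_2,y_l\}$ against the element $\{x_1,y_2\}$ of $C$ when $x_1y_l\in E(G)$). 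Your appeal to ``the cleanness of the chosen $y$'s'' does not meet this, because the obstruction comes from the \emph{other} endpoint of the $YY$-edge; and you give no argument that a $YY$-matching of full size $\lfloor n/2\rfloor$ avoiding all of these edges exists.

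None of that machinery is needed, which is essentially what the paper's proof exploits. Once $M_X\cup C$ is independent and dominates $XX\setminus M_X$, extend it to an arbitrary maximal independent set $I$. Then $I\cap XX=M_X$ has $\lfloor m/2\rfloor-1$ elements, while $|I\cap XY|\le\alpha(K_m\square K_n)=m$ and $|I\cap YY|\le\alpha(T_2(K_n))=\lfloor n/2\rfloor$ hold for \emph{every} independent set by the decomposition in Definition~\ref{rem:genbb} and Remark~\ref{rem:parts}; hence $|I|\le\alpha(T_2(G))-1$ by Lemma~\ref{lem:bbalpha}, and $T_2(G)$ is not well-covered. (The paper blocks the $XX$-part with the vertices $x_2y_1$ and $x_3y_2$ together with an $XX$-matching on $x_t,\ldots,x_m$ rather than your $C$, but the mechanism is identical.) If you delete the explicit $XY$- and $YY$-extensions and replace them with these two a priori upper bounds, your argument becomes a complete proof.
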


\begin{proof}
By Lemma~\ref{lem:bbalpha},  $\alpha(T_2(G))=\alpha(T_2(K_m))+\alpha(T_2(K_n))+m$.
And if $I$ is an independent set with $|I|=\alpha(T_2(G))$, then $I$ must contain
 $\alpha (T_2(K_m))$ vertices from $T_2(K_m)$.
If $m\geq 3$, consider a maximal independent set $I$ of $T_2(G)$ containing the vertices $x_2y_1$ and $x_3y_2$, 
as well as the vertices $x_mx_{m-1},x_{m-2}x_{m-3},\ldots,x_{t+1}x_{t}$ for $t\in \{3,4\}$ (depending
on the parity of $m$).
Then 
$I$ cannot include the vertices $x_2x_3$, $x_1x_3$, and $x_1x_2$ since these vertices are all adjacent to either $x_2y_1$ or $x_3y_2$.   
If $m=2$, construct a maximal independent set containing $x_2y_1$, and hence $x_1x_2\not\in I.$ 
In either case, $\vert I\cap XX\vert < \alpha(T_2(K_m))$, and hence
$|I| < \alpha(T_2(G))$. 
Thus $T_2(G)$ is not well-covered.
\end{proof}

\begin{figure}[ht]
	\begin{center}
	\begin{small}	
		\begin{tikzpicture}
		\vertexdef
		\node (circle) [draw, circle, minimum size = 16.7mm] at (0, 0) {};
		\node at (0,0) {$K_m$};        
		\node (circle) [draw, circle, minimum size = 19.5mm] at (3, 0) {};
         \node at (3,0) {$K_n$};       
         \node[place] (1) at (0.75,0.35){};
         \node[place] (3) at (0.75,-0.35){};        
         \node[place] (5) at (0.78,0){};
         \node[place] (6) at (2.05,0){};
                  
         \node[place] (2) at (2.1,0.35){};
         \node[place] (4) at (2.1,-0.35){};
  
        \draw (1) -- (2);
		\draw (5) -- (6);
	    \draw (3) -- (4);
	    %\draw (2) -- (4);
	    \end{tikzpicture}  
      \end{small}  
   \end{center}
   \caption{A graph $G\in \G$ with $T_2(G)$ not well-covered (Theorem~\ref{thm:3stripes}).}
	\end{figure}
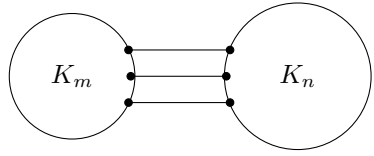

\begin{theorem}\label{thm:3stripes}
Let $G\in \G$ 
with 
$n\geq m+3$,
 with  $\{x_1,y_1\}$, $\{x_2,y_2\}$, $\{x_3,y_3\}\in E(G)$ and at most $n-m$ vertices of $Y$ have a neighbour in $X$. 
Then $T_2(G)$ is not well-covered.
\end{theorem}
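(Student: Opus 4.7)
The plan is to mirror the proof of Theorem~\ref{thm:zigzag}, producing a specific maximal independent set $I$ of $T_2(G)$ with $|I|<\alpha(T_2(G))$. Using the partition $V(T_2(G)) = XX \cup XY \cup YY$ from Definition~\ref{rem:genbb} and Remark~\ref{rem:parts}, any independent set satisfies $|I\cap XX|\leq \alpha(T_2(K_m))=\lfloor m/2\rfloor$, $|I \cap XY|\leq \alpha(K_m\square K_n) = m$, and $|I\cap YY|\leq \lfloor n/2\rfloor$. My goal will be to force $|I\cap XX|\leq\lfloor m/2\rfloor - 1$, which together with Lemma~\ref{lem:bbalpha} yields $|I|\leq \alpha(T_2(G)) - 1$.

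I will introduce three ``rotated'' cross vertices $v_1 = \{x_1,y_2\}$, $v_2 = \{x_2,y_3\}$, $v_3 = \{x_3,y_1\}$ in $V(T_2(G))$. Since these three $2$-subsets are pairwise disjoint, $\{v_1,v_2,v_3\}$ is independent in $T_2(G)$. Using the three stripe edges, each $v_i$ is adjacent to exactly one pair in $XX$: $v_1\sim\{x_1,x_2\}$ via $\{x_2,y_2\}$, $v_2\sim\{x_2,x_3\}$ via $\{x_3,y_3\}$, and $v_3\sim\{x_1,x_3\}$ via $\{x_1,y_1\}$. Hence any independent set containing $\{v_1,v_2,v_3\}$ must avoid all three pairs $\{x_1x_2,x_1x_3,x_2x_3\}$ in $XX$.

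Next, I define a matching $M$ in the restricted graph $K_m\setminus\{x_1x_2,x_1x_3,x_2x_3\}$: take $M = \{x_4x_5, x_6x_7, \ldots, x_{m-1}x_m\}$ when $m$ is odd, and $M = \{x_1x_4, x_5x_6, x_7x_8, \ldots, x_{m-1}x_m\}$ when $m$ is even. In both cases $|M|=\lfloor m/2\rfloor - 1$, and $M$ is maximal in the restricted graph: the unmatched vertices form a subset of $\{x_1,x_2,x_3\}$ whose pairwise edges are killed, while every other non-$M$ edge in the restricted graph has at least one endpoint matched by $M$ and therefore conflicts with a pair in $M$. One verifies that $\{v_1,v_2,v_3\}\cup M$ is independent in $T_2(G)$: the only case with a common anchor is $m$ even, where $x_1x_4\in M$ shares $x_1$ with $v_1$, and the relevant non-edge $\{x_4,y_2\}\notin E(G)$ under the stripe-only hypothesis.

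Extending $\{v_1,v_2,v_3\}\cup M$ greedily to a maximal independent set $I$ of $T_2(G)$, the maximality of $M$ in the restricted $K_m$ forces $I\cap XX = M$, so
\[ |I| \leq (\lfloor m/2\rfloor - 1) + m + \lfloor n/2\rfloor = \alpha(T_2(G)) - 1 < \alpha(T_2(G)). \]
Thus $I$ is a maximal independent set strictly smaller than any maximum independent set of $T_2(G)$, so $T_2(G)$ is not well-covered. The main obstacle I anticipate is the general case where $G$ has $X$--$Y$ edges beyond the three stripes: these extra edges may kill additional pairs in $XX$ or cause some $v_i$ to become adjacent to an element of $M$. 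Since $\{x_1,x_2,x_3\}$ remains an independent set in the restricted graph regardless of extra edges, small modifications of the explicit $M$ (for instance, replacing $x_1x_4$ by $x_2x_4$ or $x_3x_4$ if the former is killed by an extra edge) should still produce a maximal matching of size $\lfloor m/2\rfloor - 1$.
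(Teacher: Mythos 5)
Your core idea is exactly the paper's: place three ``rotated'' cross vertices (the paper uses $x_1y_3$, $x_2y_1$, $x_3y_2$; you use the opposite rotation) so that each of $x_1x_2$, $x_1x_3$, $x_2x_3$ becomes excluded from $XX$, add a matching on the remaining $x_i$'s, and conclude via Lemma~\ref{lem:bbalpha} and Remark~\ref{rem:parts} that any maximal independent set containing this configuration has $|I\cap XX|<\alpha(T_2(K_m))$ and hence cardinality below $\alpha(T_2(G))$.

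However, your even-$m$ construction has a gap that you partially flag but do not close. The hypotheses only say that at most $n-m$ vertices of $Y$ have a neighbour in $X$; they do not restrict how many vertices of $\{y_1,y_2,y_3\}$ a given $x_j$ ($j\geq 4$) is adjacent to. If $x_4$ is adjacent to all of $y_1,y_2,y_3$ (which is allowed), then $x_1x_4\sim v_1$, $x_2x_4\sim v_2$, and $x_3x_4\sim v_3$, so neither your pair $x_1x_4$ nor either of your proposed replacements $x_2x_4$, $x_3x_4$ can coexist with $\{v_1,v_2,v_3\}$; indeed if every $x_j$ with $j\geq 4$ is adjacent to all of $y_1,y_2,y_3$, no pair joining $\{x_1,x_2,x_3\}$ to $\{x_4,\ldots,x_m\}$ can be used at all, and your insistence on forcing $I\cap XX=M$ with $|M|=\frac{m}{2}-1$ breaks down. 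The repair is to do what the paper does: take the matching entirely inside $\{x_4,\ldots,x_m\}$ (namely $x_4x_5,\ldots,x_{m-2}x_{m-1}$ when $m$ is even), which is automatically independent from the cross vertices because the underlying $2$-sets are disjoint, and then observe that a maximal $I$ can acquire at most one further $XX$-pair (one involving the leftover $x_m$), so $|I\cap XX|\leq \frac{m}{2}-1<\alpha(T_2(K_m))$ still holds. With that adjustment your argument is complete and coincides with the paper's proof; for odd $m$ your construction already works as written.
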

\begin{proof}
By Lemma~\ref{lem:bbalpha},   $\alpha(T_2(G))=\alpha(T_2(K_m))+\alpha(T_2(K_n))+m$.
Consider a maximal independent set, say $I$, of $T_2(G)$ containing the vertices $x_1y_3$, $x_2y_1$, and $x_3y_2$. Suppose also 
$x_{2i-2}x_{2i-1}\subseteq I$ for $3\leq i\leq \left\lceil \frac{m}{2} \right\rceil$. 
Note that $x_1y_3$ is adjacent to $x_1x_3$, $x_2y_1$ is adjacent to $x_1x_2$, and 
$x_3y_1$ is adjacent to $x_2x_3$ in $T_2(G)$. 
Thus $x_1x_3$, $x_1x_2$, and $x_2x_3$ are not in $I$.
Therefore $\vert I\cap XX \vert < \alpha(T_2(K_m))$ and hence 
$\vert I \vert < \alpha(T_2(G))$. Thus $T_2(G)$ is not well-covered.
\end{proof}

In the context of the previous two theorems, if $G\in\G$ is well-covered, then the edges between $Y$ and $X$ in $G$ must consist
of at most two distinct stars, and if there are two stars, they must be disjoint.

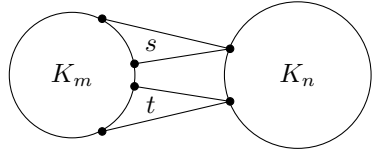
\begin{figure}[ht]
	\begin{center}
	\begin{small}	
		\begin{tikzpicture}
		\vertexdef
		\node (circle) [draw, circle, minimum size = 16.7mm] at (0, 0) {};
		\node at (0,0) {$K_m$};        
		\node (circle) [draw, circle, minimum size = 19.5mm] at (3, 0) {};
         \node at (3,0) {$K_n$};       
         \node[place] (1) at (0.83,0.15){};     
	      \node[place] (a) at (0.4, 0.75){};

		\node at (1.05, 0.4) {$s$};	
         \node[place] (3) at (0.83,-0.15){};        
         \node[place] (5) at (0.4,-0.75){};
                      
		\node at (1.05, -0.4) {$t$};    
         \node[place] (2) at (2.1,0.35){};
         \node[place] (4) at (2.1,-0.35){};
  
        \draw (a) -- (2);
        \draw (1) -- (2);
		\draw (5) -- (4);
	    \draw (3) -- (4);
	    \end{tikzpicture}  
      \end{small}  
   \end{center}
   \caption{A graph $G\in \G$ with $T_2(G)$ well-covered if $s+t\leq m$ (Theorem~\ref{thm:bba}).}
	\end{figure}

In the following theorem we consider graphs in $G\in\G$ having
one vertex of $X$ adjacent to $s$ vertices of $Y$ and
another adjacent to $t$ other vertices of $Y$, to get a
well-covered graph $T_2(G)$ when $s+t\leq m$. 

\begin{theorem}\label{thm:bba}
Let $G\in \G$ 
with $n>m$, both odd,  
such that $N[y_1]=\{x_1,x_2,\ldots,x_s\}\cup Y$ 
$N[y_2]=\{x_{s+1},x_{s+2},\ldots,x_{s+t}\} \cup Y$
and  $N[y_i]=Y$ for all $i$, $3\leq i\leq n$, with $0\leq s+t \leq m$. Then $T_2(G)$ is well-covered.
\end{theorem}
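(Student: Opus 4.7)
The plan is to show that every maximal independent set $I$ of $T_2(G)$ satisfies $|I| = \alpha(T_2(G)) = (m-1)/2 + (n-1)/2 + m$, where the value of $\alpha(T_2(G))$ comes from Lemma~\ref{lem:bbalpha} (whose hypothesis is satisfied since only $y_1$ and $y_2$ have neighbours in $X$, and $n-m\ge 2$ as $n>m$ are both odd). Using the decomposition $V(T_2(G)) = XX \sqcup XY \sqcup YY$ from Definition~\ref{rem:genbb}, I would partition $I = I_{XX} \sqcup I_{XY} \sqcup I_{YY}$ and set $a=|I_{XX}|$, $b=|I_{YY}|$, $c=|I_{XY}|$. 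Since $I_{XX}$ and $I_{YY}$ are matchings in $K_m$ and $K_n$, and $I_{XY}$ corresponds to a partial injection $\pi:X\dashrightarrow Y$ in $H[XY]\cong K_m\square K_n$, we have $a\le (m-1)/2$, $b\le (n-1)/2$, and $c\le m$. The plan is to show each of these is an equality.

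The core step is $c=m$. Suppose $x^*\in X$ is unmatched by $\pi$. By maximality, each $\{x^*,y\}$ for $y\in Y$ must have a neighbour in $I$. Enumerating these neighbours via Definition~\ref{rem:genbb}, for $y\in\{y_3,\ldots,y_n\}$ the only possible blockers (given $x^*$ is orphan in $\pi$) are $\{x',y\}\in I_{XY}$ for some $x'\neq x^*$ (requiring $y\in\pi(X)$), or---only when $x^*\in X_A:=\{x_1,\ldots,x_s\}$---the $YY$-pair $\{y_1,y\}$, or---only when $x^*\in X_B:=\{x_{s+1},\ldots,x_{s+t}\}$---the $YY$-pair $\{y_2,y\}$. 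If $x^*\in X_C:=X\setminus(X_A\cup X_B)$, there are no cross-blockers, so all of $\{y_3,\ldots,y_n\}$ lies in $\pi(X)$, giving $c\ge n-2$; with $c\le m-1$ this forces $n\le m+1$, contradicting $n\ge m+2$. If $x^*\in X_A\cup X_B$, then since $I_{YY}$ is a matching at most one $y\in\{y_3,\ldots,y_n\}$ can be cross-blocked, so $c\ge n-3$, forcing $n\le m+2$. The remaining boundary case $n=m+2$ is finished by additionally analysing the required blockers for $\{x^*,y_1\}$ and $\{x^*,y_2\}$---whose neighbourhoods include the $XX$-vertices $\{x^*,w\}$ with $w\in X_A$ (resp.\ $X_B$), the $XY$-vertices $\{w,y_1\}$, $\{w,y_2\}$, and the $YY$-vertex $\{y_1,y_2\}$---and showing, using $X_A\cap X_B=\emptyset$ and the single-pair matching constraint on $I_{YY}$, that the needed blockers are mutually incompatible. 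Hence $c=m$.

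Once $c=m$ is established, set $x^{(i)}=\pi^{-1}(y_i)$ for $i=1,2$ (when defined). For $a=(m-1)/2$: since $m$ is odd, the number of orphans of $M_X=I_{XX}$ in $X$ is odd. If three orphans $x,x',x''\in X$ existed, then blocking each of $\{x,x'\}, \{x,x''\}, \{x',x''\}$ must be by an $I_{XY}$-vertex (since none of the orphans lies in $M_X$), and each such blocker condition forces one endpoint of the pair to equal $x^{(1)}$ or $x^{(2)}$ and the other to lie in $X_A$ or $X_B$; a case analysis on which orphans equal $x^{(1)},x^{(2)}$ then forces one of $x,x',x''$ to lie in $X_A\cap X_B=\emptyset$, a contradiction. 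Hence exactly one $X$-orphan and $a=(m-1)/2$. A symmetric argument gives $b=(n-1)/2$: any pair $\{y_j,y_k\}$ with $j,k\ge 3$ has no $XY$-neighbour in $T_2(G)$ (since $y_3,\ldots,y_n$ have no $X$-edges in $G$), so at most one $Y$-orphan lies in $\{y_3,\ldots,y_n\}$; with $n$ odd, three $Y$-orphans would then include both $y_1$ and $y_2$, and blocking $\{y_1,y_j\}$ and $\{y_2,y_j\}$ for the third orphan $y_j\in\{y_3,\ldots,y_n\}$ forces $\pi^{-1}(y_j)\in X_A\cap X_B=\emptyset$, again impossible. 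Combining $a=(m-1)/2$, $b=(n-1)/2$, and $c=m$ yields $|I|=(3m+n-2)/2=\alpha(T_2(G))$, as required.

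The principal obstacle will be the delicate boundary case $n=m+2$ in the proof that $c=m$, together with the careful bookkeeping of all cross-edges in $T_2(G)$ arising from the $y_1$- and $y_2$-edges of $G$ throughout each of the three orphan analyses; the role played by the disjointness $X_A\cap X_B=\emptyset$ is essential at every stage.
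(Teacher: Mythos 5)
Your proposal is correct and follows essentially the same approach as the paper: it invokes Lemma~\ref{lem:bbalpha} for the value of $\alpha(T_2(G))$, splits a maximal independent set into its $XX$, $XY$, and $YY$ parts, and uses maximality together with the disjointness of the two stars to show each part attains its maximum size $\left((m-1)/2\right.$, $m$, and $\left.(n-1)/2\right)$. The only difference is cosmetic: you argue by showing the required ``blockers'' for an orphaned vertex are mutually incompatible (including the boundary case $n=m+2$), whereas the paper argues the contrapositive by directly exhibiting a vertex that could be added to the independent set.
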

\begin{proof} 
By Lemma~\ref{lem:bbalpha}, 
$\alpha(T_2(G))= \alpha(T_2(K_m))+\alpha (T_2(K_n))+m$.  Let $I=A\cup B\cup C$ be a maximal independent set in $T_2(G)$ with $A\subseteq YY$, $B\subseteq XY$, $C\subseteq XX$. It is enough to show that $|A|=\alpha (T_2(K_n))$, $|B|=m$, and $|C|=\alpha (T_2(K_m))$.

Suppose $|A|<\alpha (T_2(K_n))$. Then there are at least three vertices
$y_a,y_b,y_c\in Y$ which do not appear in any pair of $A$.  
Without loss of generality $y_c\not\in\{y_1,y_2\}$. Suppose $y_c$ appears in a pair $xy_c$ of $B$. Note that
$x$ has at most one neighbour in $Y$. 
Thus, without loss of generality, $x$ is not adjacent to $y_b$. In this case, and the case
when $y_c$ appears in no pair of $B$, $I=\{y_by_c\}\cup I$ is an independent set. 
But then $I$ is not maximal. Therefore, $|A|=\alpha(T_2(K_n))$. 

Suppose $|B|< m$. Then there is some $x\in X$ that appears in no pair of $B$.  
Also, there are at least $n-m+1\geq 3$ vertices of $Y$ that are not part of any pair in $B$; say $y_a,y_b,y_c$. 
Let $Z=\{ y_a,y_b,y_c \}$.
We claim that $H=\{ {x}y\} \cup I$ is an independent set of $T_2(G)$ for some $y\in Z$. 
If ${x}$ does not appear in any pair in $C$, then there will be one less restriction 
on the possible $y\in Z$ (to ensure $H$ is an independent set),
so assume ${x}w \in C$ for some $w\in X$. Then $w$ could be adjacent to $y_1$ or $y_2$ but not both. Thus
there is at most one ${x}y$ adjacent to ${x}w$ in $T_2(G)$ for $y\in Z$. Without loss of generality,
assume $w$ is adjacent to $y_a$. Now, if either $y_b$ or $y_c$ does not appear in any pair of $A$, then 
 $H$ is an independent set for that $y$. 
 
 Suppose that $y_by_c \in A$. Now ${x}$ 
is adjacent to at most one of $y_b$ and $y_c$. 
If $x$ is adjacent to $y_b$, then let $y=y_b$, otherwise let $y=y_c$. In either case, $H$ is an independent set. 

Suppose that $y_by_c \not \in A$ but
$y_by_r, y_cy_q \in A$.
Again ${x}$ is adjacent to at most one
of $y_r$ and $y_q$. Without loss of generality, assume that $y_q$ is not adjacent to ${x}$.
Then take $y=y_c$, and $H$ is an independent set.  Since in each case, 
$H$ is constructed to be an independent set, this would imply that $I$ is not maximal. Therefore
we conclude that $|B|=m$. 

Suppose $|C|<\alpha (T_2(K_m))$. Then there are at least 3 vertices $x_a,x_b,x_c\in X$
that do not appear in any pair of $C$. 
If $x_ay_1\not\in B$, $x_ay_2\not\in B$, $x_by_1\not\in B$ and $x_by_2\not\in B$, 
then $\{x_{a}x_b\}\cup I$ is an independent set in $T_2(G)$. 
Without loss of generality, assume $x_ay_1\in B$. Note that then $x_ay_2 \not \in B$ since
$y_1$ is adjacent to $y_2$. Likewise, $x_by_1 \not\in B$.

Suppose $x_by_2\not\in B$. 
Then $\{x_{b}x_c\}\cup I$ is an independent set in $T_2(G)$.
In either case, $I$ is not maximal. 

Suppose $x_by_2\in B$. Since $x_c$ is not adjacent to
both $y_1$ and $y_2$, assume that $x_c$ 
is not adjacent to $y_1$. Then $\{x_ax_c\} \cup I$ is an independent
set in $T_2(G)$, and so $I$ is not maximal.
Thus $|C|=\alpha (T_2(K_m))$. 

Therefore $T_2(G)$ is well-covered. 
\end{proof}

\begin{figure}[ht]
	\begin{center}
	\begin{small}	
		\begin{tikzpicture}
		\vertexdef
		\node (circle) [draw, circle, minimum size = 16.7mm] at (0, 0) {};
		\node at (0,0) {$K_m$};        
		\node (circle) [draw, circle, minimum size = 19.5mm] at (3, 0) {};
         \node at (3,0) {$K_n$};       
         \node[place] (1) at (2.05,0.15){};        
	      \node[place] (a) at (2.38, 0.75){};

		\node at (1.75, 0.4) {$s$};	
		
         \node[place] (3) at (2.05,-0.15){};        
         \node[place] (5) at (2.38,-0.75){};
                      
		\node at (1.75, -0.4) {$t$};      
         \node[place] (2) at (.77, 0.35){}; 
         \node[place] (4) at (0.77, -0.35){}; 
        \draw (a) -- (2);
        \draw (1) -- (2);
		\draw (5) -- (4);
	    \draw (3) -- (4);
	    \end{tikzpicture}  
      \end{small}  
   \end{center}
   \caption{A graph $G\in \G$ with $T_2(G)$ well-covered if $s+t\leq n-m$ (Theorem~\ref{thm:bbb}).}
	\end{figure}
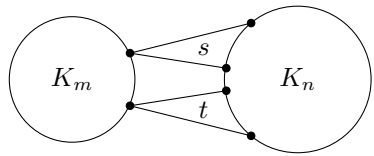

We next consider the graph considered in Theorem~\ref{thm:bba} with the stars between $K_m$ and $K_n$ reversed.

\begin{theorem}\label{thm:bbb}
Let $G\in \G$ 
with $n>m$, both odd,  
such that $N[x_1]=\{y_1,y_2,\ldots,y_s\}\cup X$, 
$N[x_2]=\{y_{s+1},y_{s+2},\ldots,y_{s+t}\} \cup X$ 
and  $N[x_i]=X$ for all $i$, $3\leq i\leq m$, with $0\leq s+t \leq n-m$. Then $T_2(G)$ is well-covered.
\end{theorem}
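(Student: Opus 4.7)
The plan is to mirror the proof of Theorem~\ref{thm:bba}, with the roles of $X$ and $Y$ appropriately swapped. Since the only vertices of $Y$ with a neighbour in $X$ are $y_1,\ldots,y_{s+t}$ and $s+t\leq n-m$, Lemma~\ref{lem:bbalpha} applies and gives
\[
\alpha(T_2(G)) = \alpha(T_2(K_m)) + \alpha(T_2(K_n)) + m.
\]
Let $I = A \cup B \cup C$ be an arbitrary maximal independent set of $T_2(G)$ partitioned so that $A \subseteq YY$, $B \subseteq XY$, $C \subseteq XX$. Since $m$ and $n$ are odd, it suffices to show $|A|=(n-1)/2$, $|B|=m$, and $|C|=(m-1)/2$, and I argue by contradiction in each case: if a bound fails, the near-perfect matching structure of maximum independent sets in $T_2(K_m)$ or $T_2(K_n)$ forces at least three uncovered vertices on the relevant side, from which I exhibit a $2$-subset that can be adjoined to $I$ preserving independence, contradicting maximality.

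Two structural observations drive the case analyses. First, the only edges of $G$ between $X$ and $Y$ are incident to $x_1$ or $x_2$, and the sets $N(x_1) \cap Y$ and $N(x_2) \cap Y$ are disjoint. Second, each vertex of $X$ (respectively $Y$) appears in at most one pair of $B$ and at most one pair of $C$ (respectively $A$). For $|C|$: if three vertices $x_a,x_b,x_c \in X$ are uncovered by $C$, then at most two of them lie in $\{x_1,x_2\}$, and potential obstructions to adjoining a pair $\{x_ix_j\}\subseteq\{x_a,x_b,x_c\}$ come only from $B$-pairs $x_iy$ with $y\in N(x_j)\cap Y$ when $x_j\in\{x_1,x_2\}$; a short case split yields a valid pair. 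The argument for $|A|$ is essentially symmetric: candidate obstructions to adjoining some $y_iy_j$ arise only from $B$-pairs using $x_1$ or $x_2$, and since $N(x_1)\cap Y$ and $N(x_2)\cap Y$ are disjoint, at most two of the three candidate pairs among uncovered $y_a,y_b,y_c$ are ruled out.

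The main obstacle is the verification of $|B|=m$, which requires the most delicate case analysis. If $|B|<m$, then some $x \in X$ is uncovered by $B$, and since $n \geq m+2$ at least three vertices $y_a,y_b,y_c \in Y$ are also uncovered by $B$. For a candidate $y \in \{y_a,y_b,y_c\}$, the adjunction $\{xy\} \cup I$ fails independence only if some $\{xx'\} \in C$ has $x' \in \{x_1,x_2\}$ with $y \in N(x')$, or (when $x \in \{x_1,x_2\}$) some $\{yy'\} \in A$ has $y' \in N(x) \setminus \{y\}$. Splitting on whether $x \in \{x_1,x_2\}$, and using the disjointness of $N(x_1)\cap Y$ and $N(x_2)\cap Y$ together with the fact that $x$ lies in at most one pair of $C$, restricts the forbidden region to (at most) a single neighbourhood in $Y$; pigeonholing over the three candidates $y_a,y_b,y_c$ (and, in the subcase $x\in\{x_1,x_2\}$, switching between two candidate pairs analogously to the corresponding argument in Theorem~\ref{thm:bba}) then locates an admissible $y$, completing the contradiction.
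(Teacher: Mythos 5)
Your overall strategy is the paper's: apply Lemma~\ref{lem:bbalpha}, split a maximal independent set as $I=A\cup B\cup C$ with $A\subseteq YY$, $B\subseteq XY$, $C\subseteq XX$, and show each part attains its maximum possible size. Your treatments of $|A|$ and $|C|$ are correct, and in fact more explicit than the paper, which simply defers these two cases to ``similar arguments'' from Theorem~\ref{thm:bba}; the key point that at most two of the three pairs on three uncovered vertices can be blocked, because $N(x_1)\cap Y$ and $N(x_2)\cap Y$ are disjoint and each of $x_1,x_2$ lies in at most one pair of $B$, checks out.

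The gap is in the $|B|=m$ step. You fix only three $B$-uncovered vertices $y_a,y_b,y_c$ and pigeonhole over them, but the forbidden region for the partner of $x$ need not have size at most $2$: if $x\notin\{x_1,x_2\}$ and $xx'\in C$ with $x'\in\{x_1,x_2\}$, then every $y\in N(x')\cap Y$ is forbidden, a set of size $s$ or $t$; if $x=x_1$, then up to $s$ vertices (the $A$-partners of the elements of $N(x_1)\cap Y$) plus up to $t$ more (all of $N(x_2)\cap Y$, when $x_1x_2\in C$) are forbidden. Since $s+t$ is bounded only by $n-m$, it can exceed $2$, and then all three of your candidates may be forbidden: take $m=3$, $n=7$, $s=4$, $t=0$, $x=x_3$ uncovered by $B$ with $x_3x_1\in C$, and choose $y_a,y_b,y_c\in\{y_1,\dots,y_4\}=N(x_1)\cap Y$; each $x_3y_i$ is adjacent to $x_3x_1$, so no admissible $y$ is located among your candidates. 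This is precisely why the three-candidate argument of Theorem~\ref{thm:bba} does not transfer directly: there each vertex of $X$ has at most one neighbour in $Y$, so the forbidden region really is tiny. The repair is the count the paper uses: $|B|\leq m-1$ leaves at least $n-m+1\geq s+t+1$ vertices of $Y$ uncovered by $B$, while at most $s+t$ of them can be forbidden (at most $t$ by $C$ and at most $s$ by $A$ in the worst case), so an admissible $y$ always exists among the full set of uncovered vertices.
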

\begin{proof}  
As in the proof of Theorem~\ref{thm:bba}, we let $I=A\cup B\cup C$ be a maximal independent set in $T_2(G)$ with $A\subseteq YY$, $B\subseteq XY$, $C\subseteq XX$. It is enough to show that $|A|=\alpha (T_2(K_n))$, $|B|=m$, and $|C|=\alpha (T_2(K_m))$.
By similar arguments to those used for the proof of Theorem~\ref{thm:bba}, we can show that $|A|=\alpha (T_2(K_n))$ and $|C|=\alpha (T_2(K_m))$.

Suppose $|B|< m$. Without loss of generality, there is some vertex 
$x \in X$ that belongs to no pair in $B$. Also, there are at least $n-m+1\geq s+t+1$ vertices
of $Y$ that belong to no pair in $B$. 

Suppose $x\in\{x_1,x_2\}$. 
Without loss of generality,
$x=x_1$. If $xy\in XY$ is adjacent to a vertex
of $C$, then $y\in\{y_{s+1},\ldots,y_{s+t}\}$. Thus
as most $t$ vertices of $XY$ containing $x$ %as an anchor 
are adjacent to a vertex in $C$. 
At most $s$ vertices of $A$ contain a member in
$\{y_1,\ldots,y_{s}\}$. Thus 
at most $s$ vertices of $XY$ containing $x$ are adjacent to vertices in $A$. 
Since there are at least $s+t+1$ vertices of $Y$ that belong to no pair in $B$, there is some $v\in Y$ such that $xv$ is not adjacent to any vertex of $I$.
Hence $\{xv\}\cup I$ is an independent set in $T_2(G)$. % 

Suppose $x\not\in\{x_1,x_2\}$. Since there are at least $s+t+1$ vertices
of $Y$ that do not appear in any pair of $B$, there is some $y_j$
with $j>s+t$ such that $y_j$ does not appear in any pair of $B$. 
Hence $\{xy_j\}\cup I$ is an independent set in $T_2(G)$. 

Since $I$ is maximal, it follows that $|B|=m$.
Thus $T_2(G)$ is well-covered.
\end{proof}

\begin{figure}[ht]
	\begin{center}
	\begin{small}	
		\begin{tikzpicture}
		\vertexdef
		\node (circle) [draw, circle, minimum size = 16.7mm] at (0, 0) {};
		\node at (0,0) {$K_m$};        
		\node (circle) [draw, circle, minimum size = 19.5mm] at (3, 0) {};
         \node at (3,0) {$K_n$};   	
		 \node[place] (2) at (2.1,0.35){};
		  \node[place] (1) at (0.83,0.15){};     
	      \node[place] (a) at (0.4, 0.75){};

		\node at (1.05, 0.4) {$s$};	

         \node[place] (3) at (2.05,-0.15){};       
         \node[place] (5) at (2.38,-0.75){};     
                      
		\node at (1.75, -0.4) {$t$};     
         \node[place] (4) at (0.77, -0.35){}; 
        \draw (a) -- (2);
        \draw (1) -- (2);
		\draw (5) -- (4);
	    \draw (3) -- (4);
	    \end{tikzpicture}  
      \end{small}  
   \end{center}
   \caption{A graph $G\in \G$ with $T_2(G)$ well-covered if $t+1\leq n-m$ and $s+1\leq m$ (Theorem~\ref{thm:bbc}).}
	\end{figure}
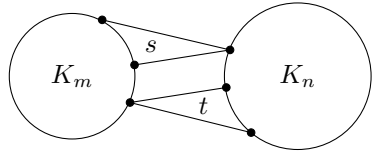

In the next theorem we consider 
graphs $G\in\G$ with one vertex of $X$ adjacent to 
$t$ vertices of $Y$ and one vertex of $Y$ 
 adjacent to $s$ vertices of $X$. Due to Theorem~\ref{thm:zigzag}, these stars will need to be disjoint if $T_2(G)$ is well-covered.
 
\begin{theorem}\label{thm:bbc}
Let $G\in\G$ with $n>m$, both odd, such that
$N[x_1]=\{y_2,y_3,\ldots,y_{t+1}\}\cup X$,  
$N[y_1]=\{x_2,x_3,\ldots,x_{s+1}\}\cup Y$,
$N[x_i]=X$ for $s+2\leq i\leq m$ and
$N[y_i]=Y$ for $t+2\leq i\leq n$,
with $s+1\leq m$ and $t+1\leq n-m$.
Then $T_2(G)$ is well-covered.
\end{theorem}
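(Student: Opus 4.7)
The plan is to parallel the structure of Theorems~\ref{thm:bba} and~\ref{thm:bbb}. Since the vertices of $Y$ with a neighbour in $X$ are exactly $y_1,y_2,\ldots,y_{t+1}$ and $t+1\leq n-m$, Lemma~\ref{lem:bbalpha} yields
\[\alpha(T_2(G))=\alpha(T_2(K_m))+\alpha(T_2(K_n))+m.\]
Let $I=A\cup B\cup C$ be any maximal independent set of $T_2(G)$ with $A\subseteq YY$, $B\subseteq XY$, $C\subseteq XX$. It suffices to prove $|A|=\alpha(T_2(K_n))$, $|B|=m$, and $|C|=\alpha(T_2(K_m))$, for then $|I|=\alpha(T_2(G))$ and $T_2(G)$ is well-covered.

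For $|A|=\alpha(T_2(K_n))$: if $|A|$ were smaller then at least three vertices $y_a,y_b,y_c$ of $Y$ would appear in no pair of $A$. Observe that for $y_p,y_q\notin A$ the pair $y_py_q$ is adjacent to some vertex of $I$ only through $B$, and only when $xy_p\in B$ with $x\in N(y_q)\cap X$ (or symmetrically). Writing $Y_1=\{y_1,\ldots,y_{t+1}\}$, if two of the witnesses lie outside $Y_1$ their pair is immediately admissible. Otherwise at least two of them lie in $Y_1$, and the argument exploits two structural facts: first, $x_1$ and $y_1$ each appear in at most one pair of $B$, since two pairs sharing an $X$-vertex (respectively a $Y$-vertex) are joined by a $K_m$-edge (respectively a $K_n$-edge); second, $N(y_i)\cap X=\{x_1\}$ for $2\leq i\leq t+1$, while $N(y_1)\cap X=\{x_2,\ldots,x_{s+1}\}$, so these two sets are disjoint. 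A case analysis on whether $y_1$ is among the witnesses and on which element of $B$ (if any) contains $x_1$ then produces one of the three pairs $y_ay_b,y_ay_c,y_by_c$ with no neighbour in $B$, contradicting maximality. The argument for $|C|=\alpha(T_2(K_m))$ is the mirror image, using $X_1=\{x_1,\ldots,x_{s+1}\}$ in place of $Y_1$.

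For $|B|=m$: suppose $|B|\leq m-1$; then some $x\in X$ appears in no pair of $B$ and at least $n-m+1$ vertices of $Y$ do likewise. For such $y$ the pair $xy$ can be added to $I$ unless (a) $y$ is paired in $A$ with some $y'\in N(x)\cap Y$, or (c) $x$ is paired in $C$ with some $x_b$ for which $y\in N(x_b)\cap Y$. As at most one pair of $C$ contains $x$, conditions (a) and (c) together block at most $|N(x)\cap Y|+|N(x_b)\cap Y|$ values of $y$; a direct inspection, split into the cases $x=x_1$, $x\in\{x_2,\ldots,x_{s+1}\}$, and $x\in\{x_{s+2},\ldots,x_m\}$, shows this total is at most $\max(t+1,2)$. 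Because $m$ and $n$ are odd with $n>m$, one has $n\geq m+2$, hence
\[|Y\setminus B|\geq n-m+1\geq\max(3,t+2)>\max(t+1,2),\]
where $t+1\leq n-m$ is used in the middle inequality. An admissible $y$ therefore exists, contradicting maximality of $I$.

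The principal obstacle is the case analysis for the $|A|$-step (and symmetrically for $|C|$) when all three witnesses lie in $Y_1$: one must systematically cycle through the three candidate pairs, using the near-uniqueness of the $B$-pairs containing $x_1$ (or $y_1$) to ensure at least one candidate is unobstructed. The $|B|$-step is a pigeonhole count that relies crucially on the parity hypothesis, which forces $|Y\setminus B|\geq 3$ and supplies the strict gap needed over the worst-case block count of $\max(t+1,2)$.
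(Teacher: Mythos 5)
Your proposal is correct and takes essentially the same route as the paper's proof: the same appeal to Lemma~\ref{lem:bbalpha}, the same decomposition $I=A\cup B\cup C$ with the three cardinality claims $|A|=\alpha(T_2(K_n))$, $|B|=m$, $|C|=\alpha(T_2(K_m))$, and the same witness-counting arguments. The case analysis you defer for the $|A|$ (and $|C|$) step is precisely what the paper writes out, and the structural facts you isolate --- each of $x_1$ and $y_1$ lies in at most one pair of $B$, and the two stars between $X$ and $Y$ are disjoint --- do suffice to close it, while your $\max(t+1,2)$ count for the $|B|$ step matches the paper's three-case estimate.
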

\begin{proof} 
By Lemma~\ref{lem:bbalpha}, 
$\alpha(T_2(G))= \alpha(T_2(K_m))+\alpha (T_2(K_n))+m$.  Let $I=A\cup B\cup C$ be a maximal independent set in $T_2(G)$ with $A\subseteq YY$, $B\subseteq XY$, $C\subseteq XX$. It is enough to show that $|A|=\alpha (T_2(K_n))$, $|B|=m$, and $|C|=\alpha (T_2(K_m))$.

Suppose $|A|<\alpha (T_2(K_n))$. Then there are at least three vertices  $y_a,y_b,y_c \in Y$ that are not in any pair of $A$.
At least two vertices in $\{y_ax_1,y_bx_1,
 y_cx_1\}$ cannot be in  $B$; without loss of generality, $y_ax_1,y_bx_1\not \in B$. 
 
 Suppose $y_1x_1\in B$. Then $y_a\neq y_1$ and $y_b\neq y_1$. Hence $\{y_ay_b\}\cup I$ is an independent set.
 
 Suppose $y_1x_1\not \in B$. If $y_a\neq y_1$ then $\{y_ay_b\}\cup I$ is an independent set. Suppose $y_a=y_1$. If $y_bw\not \in B$ for all $w\in N(y_1)$, then $\{y_ay_b\}\cup I$ is an independent set. 
 Likewise,
 if $y_cw\not \in B$ for all $w\in N(y_1)$, then $\{y_ay_c\}\cup I$ is an independent set.
 Finally, if $y_bw\in B$ for some $w\in N(y_1)$ and $y_ct\in B$ for some $t\in N(y_1)$, % in $G$, 
 then $\{y_by_c\}\cup I$ is an independent set. Therefore, if $|A|<\alpha(T_2(K_n))$ then $I$ is not a maximal independent set. Thus $|A|=\alpha (T_2(K_n))$.
 
Suppose $|B|<m$. Then there is at least one vertex $x'\in X$ that is in no pair of $B$. Let $M\subseteq Y$ be the set of vertices of $Y$ that are not
in any pair of $B$. Then $M$ contains at least
$n-m+1\geq t+2$ vertices.

Suppose $x'=x_1$. Then at most $t$ vertices 
$x'y\in XY$ are adjacent to vertices in $A$. Additionally, at most one vertex  
$x'y\in XY$ is adjacent to a vertex in $C$. Thus there
is some $y'\in M$ such that $x'y'$, is not adjacent to any vertex of $A$ or $C$. Thus $\{x'y'\}\cup I$ is an independent set.

Suppose $x'\neq x_1$. If $x'x_1\in C$, then at most $t$ vertices  $x_1y\in XY$ are adjacent to $x'x_1$. Additionally, there is at most one vertex  $yy_1\in A$. Thus there is at least one vertex, say $x'y'$, which is not adjacent to any vertex in $A$ or $C$. Then $\{x'y'\}\cup I$ is an independent set. 

Suppose $x'x_1\not\in C$. Then there is at most one vertex $x''\in X$ such that $x'x'' \in C$. Additionally, there is at most one vertex $yy_1 \in A$. Thus there exists at least one vertex of the form $x'y'$ such that $\{x'y'\}\cup I$ is an independent set. 

In each case, we have seen that if $|B|<m$, then
$I$ is not maximal. Thus $|B|=m$.

Suppose $|C|<\alpha (T_2(K_m))$. A similar argument to that used for $A$ shows that $|C|=\alpha (T_2(K_m))$.
Thus $T_2(G)$ is well-covered.
\end{proof}

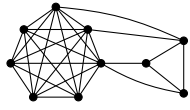
\begin{figure}
\centering
\begin{tikzpicture}%BB53e
\node (1) at (-0.6,0.15)[place][]{};
\node (2) at (-0.425,0.6)[place][]{};
\node (3) at (0,0.9)[place][]{};
\node (4) at (0.6,0.15)[place][]{};
\node (5) at (0.425,0.6)[place][]{};
\node (6) at (0.3,-0.3)[place][]{};
\node (7) at (-0.3,-0.3)[place][]{};
%\node (6) at (1,0)[place][label=$6$]{};
%\node (7) at (1.5,0.3)[place][label=$7$]{};
\node (8) at (1.2,.15)[place][]{};

%\node (6) at (1,0)[place]{};
\node (9) at (1.7,0.45)[place]{};
\node (10) at (1.7,-0.25)[place]{};

\draw (8) to (9);
\draw (10) to (9);
\draw (8) to (10);

\draw [] (1) to (2);
\draw [] (1) to (3);
\draw [] (1) to (4);
\draw [] (1) to (5);
\draw [] (1) to (6);
\draw [] (1) to (7);
\draw [] (2) to (3);
\draw [] (2) to (4);
\draw [] (2) to (5);
\draw [] (2) to (6);
\draw [] (2) to (7);
\draw [] (3) to (4);
\draw [] (3) to (5);
\draw [] (3) to (6);
\draw [] (3) to (7);
\draw [] (4) to (5);
\draw [] (4) to (6);
\draw [] (4) to (7);
\draw [] (5) to (6);
\draw [] (5) to (7);
\draw [] (6) to (7);

\draw [] (4) to (8);
%\draw [] (3) to [bend left=20] (7);
%\draw [] (3) to [bend right=20] (8);
\draw [] (4) to [bend right=10] (10);

\draw [] (9) to [bend right=10] (3);
\draw [] (9) to (5);
\end{tikzpicture}

\caption{A graph $G$ with $T_2(G)$ well-covered by Theorem~\ref{thm:bbc}.}\label{fig:bb53e}
\end{figure}
%%%%%%%%%%%%%%%%%%%%%%%%%%%%%%%%%%%

\section{Concluding comments}

By computer calculation, one can 
check that if
 $G$ is a graph on eight or fewer vertices and
 $T_2(G)$  
well-covered, then
 $G\in\G$. In fact, 
 all these graphs are accounted for by  Theorems~\ref{thm:complete}, \ref{thm:bba}
 and \ref{thm:bbb} (see the Appendix). The graphs covered in Theorem~\ref{thm:bbc} must have
 at least ten vertices, such as in 
 Figure~\ref{fig:bb53e}.
 
The theorems in
Section~\ref{sec:wcgraphs} considered graphs in $\G$ when $n>m$. We do not expect that $T_2(G)$ is well-covered
for any non-complete graph  $G\in \G$ with $n=m$. The following theorem is an  illustration. 

\begin{theorem} Suppose $G\in \G$ and $m=n>1$ and $x_1y_1$ is the only edge with one endpoint
in $Y$ and one in $X$. Then $T_2(G)$ is not well-covered.
\end{theorem}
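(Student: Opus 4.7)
The plan is to exhibit two maximal independent sets of $T_2(G)$ of different cardinalities, so that $T_2(G)$ is not well-covered. I first pin down $\alpha(T_2(G))$. Take a maximum matching $A$ of $K_m$ viewed as an independent set of $T_2(G)[XX]$ (of size $\lfloor m/2 \rfloor$), the analogous matching $B$ on $Y$, and the diagonal $D = \{x_iy_i : 1 \le i \le m\}$ in $XY$. Because $\{x_1,y_1\}$ is the only $X$--$Y$ edge of $G$, the only potentially bad cross-adjacency between an $XX$-pair $\{x_1,x_k\}$ and an $XY$-pair $\{x_k,y_1\}$ is avoided, since the diagonal pairs $y_1$ only with $x_1$. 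A routine check confirms $A \cup B \cup D$ is independent of cardinality $m + 2\lfloor m/2 \rfloor$, which with the upper bound from Theorem~\ref{qbound} pins down $\alpha(T_2(G))$.

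Next I construct a smaller maximal independent set extending $I_0 = \{x_1y_2,\,x_2y_1\}$. Note $I_0$ is independent since $x_1y_2 \triangle x_2y_1 = \{x_1,x_2,y_1,y_2\}$ has size four. Computing $N[I_0]$ directly from the edge list of $G$, one sees it blocks exactly the pair $x_1x_2 \in XX$, the pair $y_1y_2 \in YY$, and every $XY$-vertex lying in rows $1,2$ or columns $1,2$. Because every cross-adjacency in $T_2(G)$ between the three parts $XX,\,YY,\,XY$ necessarily involves row $1$ or column $1$ of the $XY$-grid (the unique cross-edge of $G$ being $\{x_1,y_1\}$), and both are now blocked, the induced subgraph $T_2(G)\setminus N[I_0]$ is the disjoint union
\[
\bigl(T_2(K_m)\setminus\{x_1x_2\}\bigr) \;\sqcup\; \bigl(T_2(K_n)\setminus\{y_1y_2\}\bigr) \;\sqcup\; \bigl(K_{m-2}\square K_{m-2}\bigr).
\]

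The hard part is to exhibit a maximal independent set of this disjoint union that is strictly smaller than its independence number. The factor $K_{m-2}\square K_{m-2} \cong L(K_{m-2,m-2})$ is well-covered (every maximal matching of $K_{m-2,m-2}$ is perfect), so any slack must come from the first two factors. For $m = 2$ the graph $G \cong P_4$ is bipartite and Corollary~\ref{cor:bpwc} applies directly. For $m \ge 3$, I would single out the vertex $v_0 = \{x_{m-1},x_m\}$ in $T_2(K_m)\setminus\{x_1x_2\}$: its non-neighbours inside that component are precisely the $2$-subsets of $\{x_1,\ldots,x_{m-2}\}$ other than $\{x_1,x_2\}$. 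Building a maximal independent set around $v_0$ of cardinality strictly less than $\lfloor m/2\rfloor$, combined via Theorem~\ref{wellcovered} with the symmetric construction on the $YY$-factor and a full transversal of $K_{m-2}\square K_{m-2}$, produces a maximal independent set of $T_2(G)$ extending $I_0$ whose cardinality falls short of $\alpha(T_2(G))$, establishing that $T_2(G)$ is not well-covered.
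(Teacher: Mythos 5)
Your computation of $N[I_0]$ and the resulting decomposition of $T_2(G)\setminus N[I_0]$ into the three components $T_2(K_m)\setminus\{x_1x_2\}$, $T_2(K_n)\setminus\{y_1y_2\}$ and $K_{m-2}\square K_{m-2}$ is correct, and the argument does go through for $m$ even (and $m=2$ via the bipartite result). But the final step fails for every odd $m\geq 3$, so the proof has a genuine gap. Maximal independent sets of $T_2(K_m)\setminus\{x_1x_2\}$ are exactly the maximal matchings of $K_m$ minus the edge $x_1x_2$; a matching of that graph is maximal precisely when its set of uncovered vertices is independent, i.e.\ is $\emptyset$, a singleton, or $\{x_1,x_2\}$. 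Since the number of uncovered vertices has the parity of $m$, for odd $m$ the uncovered set must be a single vertex and \emph{every} maximal matching has size $\lfloor m/2\rfloor$. So the deficient maximal independent set "around $v_0=x_{m-1}x_m$" that you want simply does not exist when $m$ is odd; all three components of $T_2(G)\setminus N[I_0]$ are then well-covered, every maximal independent set containing $I_0$ has the full size $2m-1=\alpha(T_2(G))$, and Theorem~\ref{wellcovered} yields nothing for this choice of $I_0$. A secondary issue: Theorem~\ref{qbound} gives the upper bound $\frac12\binom{2m}{1}\alpha(G)=2m$, which does \emph{not} meet your lower bound $m+2\lfloor m/2\rfloor=2m-1$ when $m$ is odd; you need the partition bound of Remark~\ref{rem:parts} (as the paper uses) to pin down $\alpha(T_2(G))$.

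The paper avoids this parity trap by making the deficiency occur in the $XY$ (rook's graph) part rather than in the clique parts: it keeps full maximum matchings $\{x_1x_2,x_3x_4,\ldots\}$ and $\{y_1y_2,\ldots\}$ on $XX$ and $YY$, and pairs them with only $m-1$ cells of the $m\times m$ grid, namely $\{x_1y_m\}\cup\{x_3y_2,\ldots,x_my_{m-1}\}$, which misses row $x_2$ and column $y_1$. Ordinarily an empty row and empty column could be filled by adding $x_2y_1$, but $x_2y_1$ is adjacent to $x_1x_2$ in $T_2(G)$ (their symmetric difference is the unique cross-edge $\{x_1,y_1\}$), so the set is maximal of size $\alpha(T_2(G))-1$ for all $m>1$, odd or even. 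If you want to salvage your two-step structure, you would need a different seed $I_0$ for odd $m$; as written, the case $m$ odd, $m\geq 3$ is not proved.
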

\begin{proof}
 Let $A=\{ x_1x_2, x_3x_4,\ldots\} $ and $B= \{ y_1y_2,y_3y_4,\ldots \}$
and $C =\{ x_1y_1,x_2y_2,\ldots x_ny_n\}$ then $A\cup B\cup C$ is 
an independent set and so by Remark~\ref{rem:parts}, 
$\alpha(T_2(G))=2\alpha(T_2(K_n)) +\alpha(K_n \square K_n)$. 
 Let $I= \{ x_1x_2, x_3x_4,\ldots\} \cup \{ y_1y_2,y_3y_4,\ldots \}  \cup \{x_1y_{n}\} \cup \{x_3y_2,\ldots, x_ny_{n-1} \}$, 
then $\vert I\vert =\alpha(T_2(G))-1$ and yet $I$ is maximal. Therefore $T_2(G)$ is not well-covered.
\end{proof}

\begin{remark}
If $G \in \G$ and $m=n$ and there is at least one edge $xy$ with one endpoint in $Y$ and one in $X$
and $\alpha(T_2(G))=\alpha(T_2(G)[XX])+\alpha(T_2(G)[XY])+\alpha(T_2(G)[YY])=2\alpha(T_2(K_n))+n$, then $T_2(G)$ is not well-covered. In particular, suppose $x_1y_1$ is an 
edge of $G$. Let $I$ be a maximal
independent set of $T_2(G)$ with $\{x_1y_1\} \cup\{x_1y_{n}\} \cup \{x_3y_2,\ldots, x_ny_{n-1} \} \subseteq I$. Then
$|I\cap XY|=n-1 < \alpha(T_2(G)[XY])$ and so $|I| \neq \alpha(T_2(G))$. 
\end{remark}

 One of the reasons that we are interested in well-covered graphs is that they are 
 candidate Cohen-Macaulay graphs (for details and definitions, see e.g. \cite{EVV}). 
 As an example, we can show that if $G$ is a non-complete graph $G$ of
 order $4$ with $T_2(G)$ well-covered, then $T_2(G)$ is vertex-deomposable 
 and hence $T_2(G)$ is Cohen-Macaulay. Future work could be done
 to determine when a well-covered token graph is vertex-decomposable and/or
 Cohen-Macaulay.

\newpage 

\section{Appendix: Graphs $G$ with $T_2(G)$ well-covered.}

The number of graphs $G$ of order at most $9$ with $T_2(G)$ well-covered are listed
in Table~\ref{tab:count} as determined by a computer search. The following figures display all the non-complete graphs $G$ of order at most
$9$ with $T_2(G)$ well-covered.
\begin{table}[ht]
\centering
\begin{tabular}{|c||c|c|c|c|c|c|c|c|}
\hline 
$n$ &2&3&4&5&6&7&8&9\\ \hline 
number of graphs &1&1&3&1&5&1&13&9\\ \hline 
\end{tabular}\vspace{0.1in}

\caption{Number of graphs $G$ of order $n$ with $T_2(G)$ well-covered for $n\leq 9$.}\label{tab:count}
\end{table}\vspace{-0.4in}
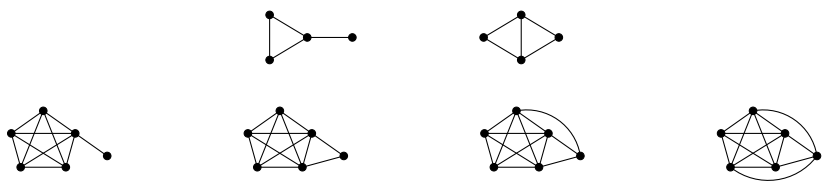
\begin{figure}[ht] 
\centering 
\begin{tikzpicture}
\node (1) at (-0.5,-0.3)[place][]{};
\node (2) at (-0.5,0.3)[place][]{};
\node (3) at (0,0)[place][]{};
\node (4) at (0.6,0)[place][]{};
\draw[] (1) to (2);
\draw [] (1) to (3);
\draw [] (2) to (3);
\draw [] (3) to (4);
\end{tikzpicture}
\hspace{15mm}\vspace{0.2in}
\begin{tikzpicture}
\node (1) at (-0.5,0)[place][]{};
\node (2) at (0,0.3)[place][]{};
\node (3) at (0,-0.3)[place][]{};
\node (4) at (0.5,0)[place][]{};
\draw [] (1) to (2);
\draw [] (1) to (3);
\draw [] (2) to (3);
\draw [] (3) to (4);
\draw [] (2) to (4);
\end{tikzpicture}\\
\begin{tikzpicture}
\node (1) at (-0.425,0.15)[place][]{};
\node (2) at (0,0.45)[place][]{};
\node (3) at (0.425,0.15)[place][]{};
\node (4) at (0.3,-0.3)[place][]{};
\node (5) at (-0.3,-0.3)[place][]{};
\node (6) at (0.85,-0.15)[place][]{};
\draw [] (1) to (2);
\draw [] (1) to (3);
\draw [] (1) to (4);
\draw [] (1) to (5);
\draw [] (2) to (3);
\draw [] (2) to (4);
\draw [] (2) to (5);
\draw [] (3) to (4);
\draw [] (3) to (5);
\draw [] (4) to (5);
\draw [] (3) to (6);
\phantom{\draw [] (5) to [bend right=45] (6);}
\end{tikzpicture}
\hspace{15mm}
\begin{tikzpicture}%BB51b
\node (1) at (-0.425,0.15)[place][]{};
\node (2) at (0,0.45)[place][]{};
\node (3) at (0.425,0.15)[place][]{};
\node (4) at (0.3,-0.3)[place][]{};
\node (5) at (-0.3,-0.3)[place][]{};
\node (6) at (0.85,-0.15)[place][]{};
\draw [] (1) to (2);
\draw [] (1) to (3);
\draw [] (1) to (4);
\draw [] (1) to (5);
\draw [] (2) to (3);
\draw [] (2) to (4);
\draw [] (2) to (5);
\draw [] (3) to (4);
\draw [] (3) to (5);
\draw [] (4) to (5);
\draw [] (3) to (6);
\draw [] (4) to (6);
\phantom{\draw [] (5) to [bend right=45] (6);}
\end{tikzpicture}
\hspace{15mm}
\begin{tikzpicture}%BB51c
\node (1) at (-0.425,0.15)[place][]{};
\node (2) at (0,0.45)[place][]{};
\node (3) at (0.425,0.15)[place][]{};
\node (4) at (0.3,-0.3)[place][]{};
\node (5) at (-0.3,-0.3)[place][]{};
\node (6) at (0.85,-0.15)[place][]{};
\draw [] (1) to (2);
\draw [] (1) to (3);
\draw [] (1) to (4);
\draw [] (1) to (5);
\draw [] (2) to (3);
\draw [] (2) to (4);
\draw [] (2) to (5);
\draw [] (3) to (4);
\draw [] (3) to (5);
\draw [] (4) to (5);
\draw [] (3) to (6);
\draw [] (4) to (6);
\draw [] (2) to [bend left=45] (6);
\phantom{\draw [] (5) to [bend right=45] (6);}
\end{tikzpicture}
\hspace{15mm}
\begin{tikzpicture}%BB51d
\node (1) at (-0.425,0.15)[place][]{};
\node (2) at (0,0.45)[place][]{};
\node (3) at (0.425,0.15)[place][]{};
\node (4) at (0.3,-0.3)[place][]{};
\node (5) at (-0.3,-0.3)[place][]{};
\node (6) at (0.85,-0.15)[place][]{};
\draw [] (1) to (2);
\draw [] (1) to (3);
\draw [] (1) to (4);
\draw [] (1) to (5);
\draw [] (2) to (3);
\draw [] (2) to (4);
\draw [] (2) to (5);
\draw [] (3) to (4);
\draw [] (3) to (5);
\draw [] (4) to (5);
\draw [] (3) to (6);
\draw [] (4) to (6);\vspace{0.2in}
\draw [] (2) to [bend left=45] (6);
\draw [] (5) to [bend right=45] (6);
\end{tikzpicture}
\caption{Non-complete graphs $G$ of order $4$ and $6$ with $T_2(G)$ well-covered.}
%\centering
\end{figure}
\begin{figure}[ht!]
\centering
\begin{tikzpicture}%BB53a
\node (1) at (-0.425,0.15)[place][]{};
\node (2) at (0,0.45)[place][]{};
\node (3) at (0.425,0.15)[place][]{};
\node (4) at (0.3,-0.3)[place][]{};
\node (5) at (-0.3,-0.3)[place][]{};
\node (6) at (1,0)[place][]{};
\node (7) at (1.5,0.3)[place][]{};
\node (8) at (1.5,-0.3)[place][]{};
\draw [] (1) to (2);
\draw [] (1) to (3);
\draw [] (1) to (4);
\draw [] (1) to (5);
\draw [] (2) to (3);
\draw [] (2) to (4);
\draw [] (2) to (5);
\draw [] (3) to (4);
\draw [] (3) to (5);
\draw [] (4) to (5);
\draw [] (3) to (6);
\draw [] (6) to (7);
\draw [] (6) to (8);
\draw [] (7) to (8);
\end{tikzpicture}
\hspace{15mm} \vspace{0.2in}
\begin{tikzpicture}%BB53b
\node (1) at (-0.425,0.15)[place][]{};
\node (2) at (0,0.45)[place][]{};
\node (3) at (0.425,0.15)[place][]{};
\node (4) at (0.3,-0.3)[place][]{};
\node (5) at (-0.3,-0.3)[place][]{};
\node (6) at (1,0)[place][]{};
\node (7) at (1.5,0.3)[place][]{};
\node (8) at (1.5,-0.3)[place][]{};
\draw [] (1) to (2);
\draw [] (1) to (3);
\draw [] (1) to (4);
\draw [] (1) to (5);
\draw [] (2) to (3);
\draw [] (2) to (4);
\draw [] (2) to (5);
\draw [] (3) to (4);
\draw [] (3) to (5);
\draw [] (4) to (5);
\draw [] (3) to (6);
\draw [] (6) to (7);
\draw [] (6) to (8);
\draw [] (7) to (8);
\draw [] (4) to (6);
\end{tikzpicture}
\hspace{15mm}
\begin{tikzpicture}%BB53c
\node (1) at (-0.425,0.15)[place][]{};
\node (2) at (0,0.45)[place][]{};
\node (3) at (0.425,0.15)[place][]{};
\node (4) at (0.3,-0.3)[place][]{};
\node (5) at (-0.3,-0.3)[place][]{};
\node (6) at (1,0)[place][]{};
\node (7) at (1.5,0.3)[place][]{};
\node (8) at (1.5,-0.3)[place][]{};
\draw [] (1) to (2);
\draw [] (1) to (3);
\draw [] (1) to (4);
\draw [] (1) to (5);
\draw [] (2) to (3);
\draw [] (2) to (4);
\draw [] (2) to (5);
\draw [] (3) to (4);
\draw [] (3) to (5);
\draw [] (4) to (5);
\draw [] (3) to (6);
\draw [] (6) to (7);
\draw [] (6) to (8);
\draw [] (7) to (8);
\draw [] (4) to (8);
\end{tikzpicture}\\  
\centering
\begin{tikzpicture}%BB53d
\node (1) at (-0.425,0.15)[place][]{};
\node (2) at (0,0.45)[place][]{};
\node (3) at (0.425,0.15)[place][]{};
\node (4) at (0.3,-0.3)[place][]{};
\node (5) at (-0.3,-0.3)[place][]{};
\node (6) at (1,0)[place][]{};
\node (7) at (1.5,0.3)[place][]{};
\node (8) at (1.5,-0.3)[place][]{};
\draw [] (1) to (2);
\draw [] (1) to (3);
\draw [] (1) to (4);
\draw [] (1) to (5);
\draw [] (2) to (3);
\draw [] (2) to (4);
\draw [] (2) to (5);
\draw [] (3) to (4);
\draw [] (3) to (5);
\draw [] (4) to (5);
\draw [] (3) to (6);
\draw [] (3) to [bend left=20] (7);
\draw [] (6) to (7);
\draw [] (6) to (8);
\draw [] (7) to (8);
\end{tikzpicture}
\hspace{15mm}
\begin{tikzpicture}%BB53e
\node (1) at (-0.425,0.15)[place][]{};
\node (2) at (0,0.45)[place][]{};
\node (3) at (0.425,0.15)[place][]{};
\node (4) at (0.3,-0.3)[place][]{};
\node (5) at (-0.3,-0.3)[place][]{};
\node (6) at (1,0)[place][]{};
\node (7) at (1.5,0.3)[place][]{};
\node (8) at (1.5,-0.3)[place][]{};
\draw [] (1) to (2);
\draw [] (1) to (3);
\draw [] (1) to (4);
\draw [] (1) to (5);
\draw [] (2) to (3);
\draw [] (2) to (4);
\draw [] (2) to (5);
\draw [] (3) to (4);
\draw [] (3) to (5);
\draw [] (4) to (5);
\draw [] (3) to (6);
\draw [] (6) to (7);
\draw [] (6) to (8);
\draw [] (7) to (8);
\draw [] (3) to [bend left=20] (7);
\draw [] (3) to [bend right=20] (8);
\end{tikzpicture}
\hspace{15mm}\vspace{0.2in}
\begin{tikzpicture}%BB53e
\node (1) at (-0.425,0.15)[place][]{};
\node (2) at (0,0.45)[place][]{};
\node (3) at (0.425,0.15)[place][]{};
\node (4) at (0.3,-0.3)[place][]{};
\node (5) at (-0.3,-0.3)[place][]{};
\node (6) at (1,0)[place][]{};
\node (7) at (1.5,0.3)[place][]{};
\node (8) at (1.5,-0.3)[place][]{};
\draw [] (1) to (2);
\draw [] (1) to (3);
\draw [] (1) to (4);
\draw [] (1) to (5);
\draw [] (2) to (3);
\draw [] (2) to (4);
\draw [] (2) to (5);
\draw [] (3) to (4);
\draw [] (3) to (5);
\draw [] (4) to (5);
\draw [] (3) to (6);
\draw [] (6) to (7);
\draw [] (6) to (8);
\draw [] (7) to (8);
\draw [] (3) to [bend left=20] (7);
%\draw [] (3) to [bend right=20] (8);
\draw [] (4) to (8);
\end{tikzpicture}\\
\begin{tikzpicture}%BB53e
\node (1) at (-0.6,0.15)[place][]{};
\node (2) at (-0.425,0.6)[place][]{};
\node (3) at (0,0.9)[place][]{};
\node (4) at (0.6,0.15)[place][]{};
\node (5) at (0.425,0.6)[place][]{};
\node (6) at (0.3,-0.3)[place][]{};
\node (7) at (-0.3,-0.3)[place][]{};
%\node (6) at (1,0)[place][label=$6$]{};
%\node (7) at (1.5,0.3)[place][label=$7$]{};
\node (8) at (1.2,.15)[place][]{};

\draw [] (1) to (2);
\draw [] (1) to (3);
\draw [] (1) to (4);
\draw [] (1) to (5);
\draw [] (1) to (6);
\draw [] (1) to (7);
\draw [] (2) to (3);
\draw [] (2) to (4);
\draw [] (2) to (5);
\draw [] (2) to (6);
\draw [] (2) to (7);
\draw [] (3) to (4);
\draw [] (3) to (5);
\draw [] (3) to (6);
\draw [] (3) to (7);
\draw [] (4) to (5);
\draw [] (4) to (6);
\draw [] (4) to (7);
\draw [] (5) to (6);
\draw [] (5) to (7);
\draw [] (6) to (7);

\draw [] (4) to (8);
\end{tikzpicture}
\hspace{15mm}\vspace{0.2in}
\begin{tikzpicture}%BB53e
\node (1) at (-0.6,0.15)[place][]{};
\node (2) at (-0.425,0.6)[place][]{};
\node (3) at (0,0.9)[place][]{};
\node (4) at (0.6,0.15)[place][]{};
\node (5) at (0.425,0.6)[place][]{};
\node (6) at (0.3,-0.3)[place][]{};
\node (7) at (-0.3,-0.3)[place][]{};
%\node (6) at (1,0)[place][label=$6$]{};
%\node (7) at (1.5,0.3)[place][label=$7$]{};
\node (8) at (1.2,.15)[place][]{};
\draw [] (1) to (2);
\draw [] (1) to (3);
\draw [] (1) to (4);
\draw [] (1) to (5);
\draw [] (1) to (6);
\draw [] (1) to (7);
\draw [] (2) to (3);
\draw [] (2) to (4);
\draw [] (2) to (5);
\draw [] (2) to (6);
\draw [] (2) to (7);
\draw [] (3) to (4);
\draw [] (3) to (5);
\draw [] (3) to (6);
\draw [] (3) to (7);
\draw [] (4) to (5);
\draw [] (4) to (6);
\draw [] (4) to (7);
\draw [] (5) to (6);
\draw [] (5) to (7);
\draw [] (6) to (7);

\draw [] (4) to (8);
\draw [] (6) to (8);

%\draw [] (3) to [bend left=20] (7);
%\draw [] (3) to [bend right=20] (8);
%\draw [] (4) to (8);
\end{tikzpicture}
\hspace{15mm}
\begin{tikzpicture}%BB53e
\node (1) at (-0.6,0.15)[place][]{};
\node (2) at (-0.425,0.6)[place][]{};
\node (3) at (0,0.9)[place][]{};
\node (4) at (0.6,0.15)[place][]{};
\node (5) at (0.425,0.6)[place][]{};
\node (6) at (0.3,-0.3)[place][]{};
\node (7) at (-0.3,-0.3)[place][]{};
%\node (6) at (1,0)[place][label=$6$]{};
%\node (7) at (1.5,0.3)[place][label=$7$]{};
\node (8) at (1.2,.15)[place][]{};
\draw [] (1) to (2);
\draw [] (1) to (3);
\draw [] (1) to (4);
\draw [] (1) to (5);
\draw [] (1) to (6);
\draw [] (1) to (7);
\draw [] (2) to (3);
\draw [] (2) to (4);
\draw [] (2) to (5);
\draw [] (2) to (6);
\draw [] (2) to (7);
\draw [] (3) to (4);
\draw [] (3) to (5);
\draw [] (3) to (6);
\draw [] (3) to (7);
\draw [] (4) to (5);
\draw [] (4) to (6);
\draw [] (4) to (7);
\draw [] (5) to (6);
\draw [] (5) to (7);
\draw [] (6) to (7);

\draw [] (4) to (8);
\draw [] (5) to (8);
\draw [] (6) to (8);

%\draw [] (3) to [bend left=20] (7);
%\draw [] (3) to [bend right=20] (8);
%\draw [] (4) to (8);
\end{tikzpicture} \\
%\hspace{5mm}
\begin{tikzpicture}%BB53e
\node (1) at (-0.6,0.15)[place][]{};
\node (2) at (-0.425,0.6)[place][]{};
\node (3) at (0,0.9)[place][]{};
\node (4) at (0.6,0.15)[place][]{};
\node (5) at (0.425,0.6)[place][]{};
\node (6) at (0.3,-0.3)[place][]{};
\node (7) at (-0.3,-0.3)[place][]{};
%\node (6) at (1,0)[place][label=$6$]{};
%\node (7) at (1.5,0.3)[place][label=$7$]{};
\node (8) at (1.2,.15)[place][]{};
\draw [] (1) to (2);
\draw [] (1) to (3);
\draw [] (1) to (4);
\draw [] (1) to (5);
\draw [] (1) to (6);
\draw [] (1) to (7);
\draw [] (2) to (3);
\draw [] (2) to (4);
\draw [] (2) to (5);
\draw [] (2) to (6);
\draw [] (2) to (7);
\draw [] (3) to (4);
\draw [] (3) to (5);
\draw [] (3) to (6);
\draw [] (3) to (7);
\draw [] (4) to (5);
\draw [] (4) to (6);
\draw [] (4) to (7);
\draw [] (5) to (6);
\draw [] (5) to (7);
\draw [] (6) to (7);

\draw [] (4) to (8);
\draw [] (5) to (8);
\draw [] (6) to (8);
\draw [] (3) to [bend left=20] (8);
%\draw [] (3) to [bend right=20] (8);
%\draw [] (4) to (8);
\end{tikzpicture} 
\hspace{15mm}
\begin{tikzpicture}%BB53e
\node (1) at (-0.6,0.15)[place][]{};
\node (2) at (-0.425,0.6)[place][]{};
\node (3) at (0,0.9)[place][]{};
\node (4) at (0.6,0.15)[place][]{};
\node (5) at (0.425,0.6)[place][]{};
\node (6) at (0.3,-0.3)[place][]{};
\node (7) at (-0.3,-0.3)[place][]{};
%\node (6) at (1,0)[place][label=$6$]{};
%\node (7) at (1.5,0.3)[place][label=$7$]{};
\node (8) at (1.2,.15)[place][]{};
\draw [] (1) to (2);
\draw [] (1) to (3);
\draw [] (1) to (4);
\draw [] (1) to (5);
\draw [] (1) to (6);
\draw [] (1) to (7);
\draw [] (2) to (3);
\draw [] (2) to (4);
\draw [] (2) to (5);
\draw [] (2) to (6);
\draw [] (2) to (7);
\draw [] (3) to (4);
\draw [] (3) to (5);
\draw [] (3) to (6);
\draw [] (3) to (7);
\draw [] (4) to (5);
\draw [] (4) to (6);
\draw [] (4) to (7);
\draw [] (5) to (6);
\draw [] (5) to (7);
\draw [] (6) to (7);

\draw [] (4) to (8);
\draw [] (5) to (8);
\draw [] (6) to (8);
\draw [] (3) to [bend left=20] (8);
\draw [] (7) to (8);
%\draw [] (3) to [bend right=20] (8);
%\draw [] (4) to (8);
\end{tikzpicture} 
\hspace{15mm}
\begin{tikzpicture}%BB53e
\node (1) at (-0.6,0.15)[place][]{};
\node (2) at (-0.425,0.6)[place][]{};
\node (3) at (0,0.9)[place][]{};
\node (4) at (0.6,0.15)[place][]{};
\node (5) at (0.425,0.6)[place][]{};
\node (6) at (0.3,-0.3)[place][]{};
\node (7) at (-0.3,-0.3)[place][]{};
%\node (6) at (1,0)[place][label=$6$]{};
%\node (7) at (1.5,0.3)[place][label=$7$]{};
\node (8) at (1.2,.15)[place][]{};
\draw [] (1) to (2);
\draw [] (1) to (3);
\draw [] (1) to (4);
\draw [] (1) to (5);
\draw [] (1) to (6);
\draw [] (1) to (7);
\draw [] (2) to (3);
\draw [] (2) to (4);
\draw [] (2) to (5);
\draw [] (2) to (6);
\draw [] (2) to (7);
\draw [] (3) to (4);
\draw [] (3) to (5);
\draw [] (3) to (6);
\draw [] (3) to (7);
\draw [] (4) to (5);
\draw [] (4) to (6);
\draw [] (4) to (7);
\draw [] (5) to (6);
\draw [] (5) to (7);
\draw [] (6) to (7);

\draw [] (4) to (8);
\draw [] (5) to (8);
\draw [] (6) to (8);
\draw [] (3) to [bend left=20] (8);
\draw [] (7) to (8);
\draw [] (2) to (8);
%\draw [] (3) to [bend right=20] (8);
%\draw [] (4) to (8);
\end{tikzpicture} 
\caption{Non-complete graphs $G$ of order $8$ with $T_2(G)$ well-covered.}
\centering
\end{figure}

 %%%%%%%%%%%%%%%%%%%%%%%%%%%%%%%%%%%%%%%%%%%%%%%%%%%%%%%%%%%%%%%%%%%%%%%%%%%%%%%%%9b

\begin{figure}[ht!]
\centering
\begin{tikzpicture}
\node (1) at (-0.425,0.15)[place]{};
\node (2) at (0,0.45)[place]{};
\node (3) at (0.425,0.15)[place]{};
\node (4) at (0.3,-0.3)[place]{};
\node (5) at (-0.3,-0.3)[place]{};
\node (6) at (1,0)[place]{};
\node (7) at (1.5,0.3)[place]{};
\node (8) at (1.5,-0.3)[place]{};
\node (9) at (1,0.6)[place]{};
\draw [] (1) to (2);
\draw [] (1) to (3);
\draw [] (1) to (4);
\draw [] (1) to (5);
\draw [] (2) to (3);
\draw [] (2) to (4);
\draw [] (2) to (5);
\draw [] (3) to (4);
\draw [] (3) to (5);
\draw [] (4) to (5);
\draw [] (3) to (6);
\draw [] (6) to (7);
\draw [] (6) to (8);
\draw [] (7) to (8);

\draw [] (3) to (9);
\end{tikzpicture}
\hspace{15mm}
\begin{tikzpicture}
\node (1) at (-0.425,0.15)[place]{};
\node (2) at (0,0.45)[place]{};
\node (3) at (0.425,0.15)[place]{};
\node (4) at (0.3,-0.3)[place]{};
\node (5) at (-0.3,-0.3)[place]{};
\node (6) at (1,0)[place]{};
\node (7) at (1.5,0.3)[place]{};
\node (8) at (1.5,-0.3)[place]{};
\node (9) at (1,0.6)[place]{};
\draw [] (1) to (2);
\draw [] (1) to (3);
\draw [] (1) to (4);
\draw [] (1) to (5);
\draw [] (2) to (3);
\draw [] (2) to (4);
\draw [] (2) to (5);
\draw [] (3) to (4);
\draw [] (3) to (5);
\draw [] (4) to (5);
\draw [] (3) to (6);
\draw [] (6) to (7);
\draw [] (6) to (8);
\draw [] (7) to (8);

\draw [] (6) to (9);
\end{tikzpicture}
\hspace{15mm}\vspace{.2in}
\begin{tikzpicture}
\node (1) at (-0.425,0.15)[place]{};
\node (2) at (0,0.45)[place]{};
\node (3) at (0.425,0.15)[place]{};
\node (4) at (0.3,-0.3)[place]{};
\node (5) at (-0.3,-0.3)[place]{};
\node (6) at (1,0)[place]{};
\node (7) at (1.5,0.3)[place]{};
\node (8) at (1.5,-0.3)[place]{};
\node (9) at (1,0.6)[place]{};
\draw [] (1) to (2);
\draw [] (1) to (3);
\draw [] (1) to (4);
\draw [] (1) to (5);
\draw [] (2) to (3);
\draw [] (2) to (4);
\draw [] (2) to (5);
\draw [] (3) to (4);
\draw [] (3) to (5);
\draw [] (4) to (5);
\draw [] (3) to (6);
\draw [] (6) to (7);
\draw [] (6) to (8);
\draw [] (7) to (8);

\draw [] (7) to (9);
\end{tikzpicture}
\hspace{15mm}
\begin{tikzpicture}
\node (1) at (-0.425,0.15)[place]{};
\node (2) at (0,0.45)[place]{};
\node (3) at (0.425,0.15)[place]{};
\node (4) at (0.3,-0.3)[place]{};
\node (5) at (-0.3,-0.3)[place]{};
\node (6) at (1,0)[place]{};
\node (7) at (1.5,0.3)[place]{};
\node (8) at (1.5,-0.3)[place]{};
\node (9) at (1,0.6)[place]{};
\draw [] (1) to (2);
\draw [] (1) to (3);
\draw [] (1) to (4);
\draw [] (1) to (5);
\draw [] (2) to (3);
\draw [] (2) to (4);
\draw [] (2) to (5);
\draw [] (3) to (4);
\draw [] (3) to (5);
\draw [] (4) to (5);
%\draw [] (3) to (6);
\draw [] (6) to (7);
\draw [] (6) to (8);
\draw [] (7) to (8);

\draw [] (3) to (9);
\draw [] (7) to (9);
\end{tikzpicture}\\
\hspace{5mm}
\begin{tikzpicture}
\node (1) at (-0.425,0.15)[place]{};
\node (2) at (0,0.45)[place]{};
\node (3) at (0.425,0.15)[place]{};
\node (4) at (0.3,-0.3)[place]{};
\node (5) at (-0.3,-0.3)[place]{};
\node (6) at (1,0)[place]{};
\node (7) at (1.5,0.3)[place]{};
\node (8) at (1.5,-0.3)[place]{};
\node (9) at (1,0.6)[place]{};
\draw [] (1) to (2);
\draw [] (1) to (3);
\draw [] (1) to (4);
\draw [] (1) to (5);
\draw [] (2) to (3);
\draw [] (2) to (4);
\draw [] (2) to (5);
\draw [] (3) to (4);
\draw [] (3) to (5);
\draw [] (4) to (5);
\draw [] (3) to (6);
\draw [] (6) to (7);
\draw [] (6) to (8);
\draw [] (7) to (8);

\draw [] (3) to (9);
\draw [] (7) to (9);
\end{tikzpicture}
\hspace{15mm}
\begin{tikzpicture}
\node (1) at (-0.425,0.15)[place]{};
\node (2) at (0,0.45)[place]{};
\node (3) at (0.425,0.15)[place]{};
\node (4) at (0.3,-0.3)[place]{};
\node (5) at (-0.3,-0.3)[place]{};
\node (6) at (1,0)[place]{};
\node (7) at (1.5,0.3)[place]{};
\node (8) at (1.5,-0.3)[place]{};
\node (9) at (1,0.6)[place]{};
\draw [] (1) to (2);
\draw [] (1) to (3);
\draw [] (1) to (4);
\draw [] (1) to (5);
\draw [] (2) to (3);
\draw [] (2) to (4);
\draw [] (2) to (5);
\draw [] (3) to (4);
\draw [] (3) to (5);
\draw [] (4) to (5);
\draw [] (3) to (6);
\draw [] (6) to (7);
\draw [] (6) to (8);
\draw [] (7) to (8);

\draw [] (3) to (9);
\draw [] (7) to (9);
\draw [] (3) to [bend right=20] (8);

\end{tikzpicture}
\hspace{15mm}
\begin{tikzpicture}
\node (1) at (-0.425,0.15)[place]{};
\node (2) at (0,0.45)[place]{};
\node (3) at (0.425,0.15)[place]{};
\node (4) at (0.3,-0.3)[place]{};
\node (5) at (-0.3,-0.3)[place]{};
\node (6) at (1,0)[place]{};
\node (7) at (1.5,0.3)[place]{};
\node (8) at (1.5,-0.3)[place]{};
\node (9) at (1,0.6)[place]{};
\draw [] (1) to (2);
\draw [] (1) to (3);
\draw [] (1) to (4);
\draw [] (1) to (5);
\draw [] (2) to (3);
\draw [] (2) to (4);
\draw [] (2) to (5);
\draw [] (3) to (4);
\draw [] (3) to (5);
\draw [] (4) to (5);
\draw [] (3) to (6);
\draw [] (6) to (7);
\draw [] (6) to (8);
\draw [] (7) to (8);

\draw [] (7) to (9);
\draw [] (3) to [bend right=20] (8);
\end{tikzpicture}
\hspace{15mm}
\begin{tikzpicture}
\node (9) at (-0.925, 0.1)[place]{};
\node (1) at (-0.425,0.15)[place]{};
\node (2) at (0,0.45)[place]{};
\node (3) at (0.425,0.15)[place]{};
\node (4) at (0.3,-0.3)[place]{};
\node (5) at (-0.3,-0.3)[place]{};
\node (6) at (1,0)[place]{};
\node (7) at (1.5,0.3)[place]{};
\node (8) at (1.5,-0.3)[place]{};
%\node (9) at (1,0.6)[place]{};
\draw [] (1) to (2);
\draw [] (1) to (3);
\draw [] (1) to (4);
\draw [] (1) to (5);
\draw [] (2) to (3);
\draw [] (2) to (4);
\draw [] (2) to (5);
\draw [] (3) to (4);
\draw [] (3) to (5);
\draw [] (4) to (5);
%\draw [] (3) to (6);
\draw [] (6) to (7);
\draw [] (6) to (8);
\draw [] (7) to (8);

\draw [] (1) to (9);
\draw [] (3) to (6);
\end{tikzpicture}
\caption{Non-complete graphs $G$ of order $9$ with $T_2(G)$ well-covered.\bigskip\bigskip}
\end{figure}
\bigskip
\bigskip
\hrulefill{}
%%%%%%%%%%%%%%%%%%%%%%%%%%%%%%%%%%%%%%%%%%%%%%%%%%%%%%%%%%%%%%%%%%%%%%%%%%%%%%%%%%%%%%9e

\end{document}